\newcommand{\lb}{\boldsymbol{l}}
\newcommand{\sms}{\hspace{.7pt}}
\newcommand{\br}[3]{{$#1$}$\lower4pt\hbox{$\tp\atop\raise4pt \hbox{$\scriptscriptstyle{#2}$}$} ${$#3$}}
\newcommand{\tw}[3]{{$#1$}${\,\scriptscriptstyle {#2}}\atop\raise9pt\hbox{$\scriptstyle\tp$} ${$#3$}}
\newcommand{\ttps}[2]{{#1}\raise5pt\hbox{$\lower12pt\hbox{$\scriptstyle\tp$}\atop \lower0pt\hbox{$\tilde\;$}$}\raise4.5pt\hbox{${\scriptstyle{#2}}$}}
\newcommand{\st}[1]{\mbox{${\,\scriptscriptstyle {#1}}\atop\raise5.5pt\hbox{$*$}$}}
\newcommand{\rd}[1]{\mbox{${\,\scriptscriptstyle {#1}}\atop\raise5.5pt\hbox{$\bullet$}$}}
\newcommand{\rt}[1]{\otimes_\chi}
\newcommand{\lt}[1]{\mbox{${\,\scriptscriptstyle {#1}}\atop\raise5.5pt\hbox{$\ltimes$}$}}
\newcommand{\btr}{\raise1.2pt\hbox{$\scriptstyle\blacktriangleright$}\hspace{2pt}}
\newcommand{\btl}{\raise1.2pt\hbox{$\scriptstyle\blacktriangleleft$}\hspace{2pt}}
\newcommand{\lcr}{\raise1.0pt \hbox{${\scriptstyle\rightharpoonup}$}}
\newcommand{\rcr}{\raise1.0pt \hbox{${\scriptstyle\leftharpoonup}$}}
\newcommand{\ttp}{{\lower12pt\hbox{$\tp$}\atop \hbox{$\tilde\;$}}}
\newcommand{\id}{\mathrm{id}}
\newcommand{\Tg}{\mathfrak{T}}
\newcommand{\C}{\mathbb{C}}
\newcommand{\Z}{\mathbb{Z}}
\newcommand{\N}{\mathbb{N}}
\newcommand{\tp}{\otimes}
\newcommand{\U}{U}
\newcommand{\dt}{\delta}
\newcommand{\la}{\lambda}
\newcommand{\End}{\mathrm{End}}
\newcommand{\Span}{\mathrm{Span}}
\newcommand{\Rm}{\mathrm{R}}
\newcommand{\g}{\mathfrak{g}}
\renewcommand{\b}{\mathfrak{b}}
\newcommand{\h}{\mathfrak{h}}
\newcommand{\sib}{\boldsymbol{\si}}
\newcommand{\kb}{\boldsymbol{k}}
\newcommand{\s}{\mathfrak{s}}
\newcommand{\n}{\mathfrak{n}}
\newcommand{\nn}{\nonumber}
\renewcommand{\l}{\mathfrak{l}}
\newcommand{\si}{\sigma}
\newcommand{\al}{\alpha}
\newcommand{\bt}{\beta}
\newcommand{\be}{\begin{eqnarray}}
\newcommand{\ee}{\end{eqnarray}}
\newtheorem{thm}{Theorem}[section]
\newtheorem{propn}[thm]{Proposition}
\newtheorem{lemma}[thm]{Lemma}
\newtheorem{corollary}[thm]{Corollary}
\newtheorem{example}[thm]{Example}
\newcommand{\parag}{\advance\prg by1 {\noindent\bf\thesection.\the\prg\hspace{6pt}}}
\begin{document}
\title{Orthogonal basis for the Shapovalov form on $U_q\bigl(\s\l(n+1)\bigr)$}
\author{
Andrey Mudrov\footnote{Partly supported by the RFBR grant 12-01-00207-a.} \vspace{20pt}\\
\small Department of Mathematics,\\ \small University of Leicester, \\
\small University Road,
LE1 7RH Leicester, UK\\
\small e-mail: am405@le.ac.uk\\
}

\date{}
\maketitle

\begin{abstract}
Let $U$ be either classical or quantized universal enveloping algebra of the Lie algebra $\s\l(n+1)$ extended over
the field of fractions of the Cartan subalgebra. We suggest a PBW basis
in $U$ over the extended Cartan subalgebra diagonalizing the
contravariant Shapovalov form on  generic Verma module.
The matrix coefficients of the form are calculated and the inverse of the form is explicitly constructed.
\end{abstract}

{\small \underline{Mathematics Subject Classifications}: 17B37, 22E47,  81R50,
}

{\small \underline{Key words}: Verma modules, Shapovalov form, orthogonal basis.
}
\section{Introduction}
The contravariant bilinear form on Verma modules is a fundamental object in the representation theory
of simple complex Lie algebras and quantum groups, which is responsible for many important properties including irreducibility,
\cite{Jan2}.
Its inverse is closely related with intertwining operators \cite{EK}, the dynamical Yang-Baxter equation \cite{ES}, and invariant star product on homogeneous spaces, \cite{AL,KST}.

Contravariant forms on highest weight modules descend from a bilinear form on the
universal enveloping algebra  with values in the Cartan subalgebra. It was
introduced and studied by Shapovalov  \cite{Sh},  who computed the determinant for its restriction to every weight subspace.
It was extended to quantum groups in \cite{DCK}.
The determinant formula was further generalized for parabolic Verma modules over the classical universal enveloping algebras in \cite{Jan1}. These results provided a criterion for the corresponding
 modules to be irreducible, since the  kernel of a contravariant form is invariant.

Applications to mathematical physics require the knowledge of the inverse Shapovalov form, which explicit expression
is an open problem for general simple Lie algebras. The most important advance in this direction was made in \cite{BHST},
where matrix coefficients of the pairing on Mickelsson algebras were calculated. However, \cite{BHST} does not address the Verma modules focusing on different problem. Although the inverse Shapovalov form for the $A_n$ series
can be derived from \cite{BHST}, a self-contained presentation is still missing in the literature. In the present paper we give an independent elementary derivation based on the definition of the quantum group.
We construct the orthogonal basis of the Shapovalov form on $U_q\bigl(\g\l(n+1)\bigr)$  and obtain a similar result for $U\bigl(\g\l(n+1)\bigr)$ via the classical limit. Of course, the classical case can be done directly, in an even
simpler way. The ground field is fixed to $\C$ but can be changed to an arbitrary field of zero characteristic.

We consider a system of "dynamical root vectors" $\hat e_{\pm \mu}$ in the Borel subalgebras.
Upon appropriate ordering, it gives rise to a Poincar\'e-Birkhoff-Witt (PBW) basis over the (extended) Cartan subalgebra.
The vectors $\hat e_{\pm \mu}$ are constructed from the Chevalley generators through generalized commutators
with coefficients in the Cartan subalgebra.
The positive and negative dynamical root vectors are related via
$\omega(\hat e_{\pm\mu})=\hat e_{\mp\mu}$, where $\omega$ is the anti-algebra Chevalley involution.
This PBW system diagonalizes the Shapovalov form on every
Verma module $M_\la$ and is complete if the  highest weight $\la$ is away from a family of hyperplanes. This family is wider that the zero set of the Shapovalov determinant, which is known to be  $\cup_{\al\in R^+}\{\la| (\la+\rho,\al)\in \N\}$
for $U(\g)$ and $\cup_{\al\in R^+}\{\la| q^{2(\la+\rho,\al)}\in q^{2\N}\}$ for $U_q(\g)$. Our set of singular points is
still contained in $\cup_{\al\in R^+}\{\la| (\la,\al)\in \Z\}$ for $U(\g)$ and  in
$\cup_{\al\in R^+}\{\la| q^{2(\la,\al)}\in q^{2\Z}\}$ for $U_q(\g)$. Away from this set, the dynamical PBW system
is a basis.
We compute the matrix coefficients and construct the inverse form for generic weight, off the union
of hyperplanes where some of the matrix coefficients vanish.

The dynamical root vectors project to generators of the Mickelsson algebras associated with a chain
of subalgebras $\s\l(i)\subset \s\l(i+1)$, $i=2,\ldots, n$. Essentially they are raising and lowering
operators participating in  construction of the Gelfand-Zetlin basis in finite dimensional $U_q(\g)$-modules,
\cite{Mol}. Elements of the Gelfand-Zetlin basis are  formed by common eigenvectors of the commutative subalgebra generated by $U_q(\h)$ and the center of $U_q\bigl(\s\l(i)\bigr)$, $i=2,\ldots,n+1$. The dynamical PBW monomials feature the same property and
 form the Gelfand-Zetlin basis in Verma modules.

The paper is organized as follows.
After the preliminary section containing the basics  on the quantum group $U_q(\s\l(n+1))$,
we introduce the dynamical root vectors and study their key properties. Then we show that, upon
an appropriate ordering, the systems of positive and negative dynamical  PBW monomials
give rise to dual bases in right lower  and left
upper Verma modules with respect to the cyclic Shapovalov pairing. We compute the matrix coefficients and construct
the inverse of the cyclic form. Further we pass from the cyclic form to contravariant and prove that the PBW system of negative dynamical root
vectors yields an orthogonal basis. This should be regarded as a refinement of the cyclic result and it is based on a "row-wise commutativity" of dynamical root vectors proved therein.
Further we illustrate the key steps on the example of $A_2$. In the last section, we apply the dynamical root vectors to construction of singular vectors in the Verma modules.

\section{Preliminaries: the quantum group $U_q(\s\l(n+1))$}
For a guide in quantum groups, the reader is referred to \cite{Jan2} or \cite{ChP}, or to the original
paper \cite{D}. In this section we collect the facts about quantum $\s\l(n+1)$ that are
relevant to this exposition.

Let us fix some general notation. We work over the ground field $\C$ of complex numbers.
By $\Z$ we denote the set of all integers, by $\Z_+$ the subset of non-negative
and by $\N$ the subset of strictly positive integers. Given $a,b\in \Z$ we understand by $[a,b]\subset \Z$
the interval of all integers from $a$ to $b$ inclusive. We also use the notation $(a,b]$, $[a,b)$,
and $(a,b)$ for  intervals without one or two boundaries.

Throughout the paper, $\g$ stands for the Lie algebra $\g=\s\l(n+1)$, $n\geqslant 1$. The case $n=1$ is
trivial, and we are mostly interested in $n\geqslant 2$. Fix a Cartan subalgebra  $\h\subset \g$
and let $R\subset \h^*$ denote the root system of $\g$ with a subsystem $R^+$ of positive roots, relative to $\h$.
The choice of $R^+$ facilitates  a triangular decomposition,
$\g=\n^-\oplus \h\oplus \n^+,$
where $\n^\pm$ are nilpotent Lie subalgebras corresponding to the positive and negative roots.
Let $(.,.)$ designate the canonical inner product on $\h^*$.

Denote  by $\Pi^+\subset R^+$ the basis of simple positive roots $\{\al_1,\ldots, \al_n\}$, with the standard ordering
determined up to the inversion by the condition $(\al_i,\al_j)=0$ for $|i-j|>1$.
For any pair of integers $i,j\in [1,n]$ such that $i\leqslant j$
let  $\g_{ij}\subset \g$ be the Lie subalgebra
$\s\l(j-i+2)$ corresponding to the roots $\al_i,\ldots, \al_j \in \Pi^+$.
We also consider the Cartan subalgebra $\h_{ij}=\g_{ij}\cap \h$ and nilpotent subalgebras $\n_{ij}^\pm =\g_{ij}\cap \n^\pm$,
so that
$\g_{ij}=\n^-_{ij}\oplus \h_{ij}\oplus \n^+_{ij}$
is a triangular decomposition compatible with the decomposition of $\g$.

We assume that $q\in \C$ is not a root of unity and define $[x]_q=\frac{q^x-q^{-x}}{q-q^{-1}}$   for  an indeterminate $x$.
The quantum group $U_q(\g)$ is a $\C$-algebra generated by $e_i,f_i,t^{\pm 1}_i$,  $i\in[1,n]$, subject to the
Chevalley relations
$$
t_ie_j=q^{(\al_i,\al_j)}e_jt_i,\quad t_if_j=q^{-(\al_i,\al_j)}f_jt_i,\quad [e_i,f_j]=\dt_{ij}\frac{t_i-t^{-1}_i}{q-q^{-1}},
$$
and the  Serre relations
$$
e_i^2e_j-[2]_qe_ie_j e_i+e_je_i^2=0, \quad f_i^2f_j- [2]_qf_if_j f_i+f_jf_i^2=0, \quad |i-j|=1,
$$
$$
[e_i,e_j]=0, \quad [f_i,f_j]=0, \quad |i-j|>1.
$$
The elements $e_i$ and $f_i$ are called, respectively, the positive and negative Chevalley generators.
The assignment $\omega\colon t_i\mapsto t_i$, $\omega\colon e_i\mapsto f_i$, $\omega\colon f_i\mapsto e_i$
extends to an anti-algebra automorphism of $U_q(\g)$ called Chevalley involution.

The quantum group can be also defined as an algebra over  the
ring of fractions of $\C[q,q^{-1}]$ over the multiplicative system generated by $q^m-1$, $m\in \N$.
Its $\C[\![\hbar]\!]$-version is a $\C[\![\hbar]\!]$-extension of $U_q(\g)$ completed in
the $\hbar$-adic topology. The extension goes through the embedding $\C[q,q^{-1}]\to \C[\![\hbar]\!]$, $q\mapsto e^{\hbar}$.
The corresponding relations translate to
$$
[h_i,e_j]=(\al_i,\al_j)e_j,\quad [h_i,f_j]=-(\al_i,\al_j)f_j,\quad [e_i,f_j]=\dt_{ij}[h_i]_q.
$$
upon the substitution $t^{\pm1}_i=q^{\pm h_i}$. This algebra, denoted by $U_\hbar(\g)$, is a deformation
of the classical universal enveloping algebra $U(\g)$.
It is still convenient to use the notation $[h_i]_q=\frac{t_i-t^{-1}_i}{q-q^{-1}}$ and $q^{h_i}=t_i$ when working with
$U_q(\g)$. This makes sense of $[h]_q\in U_q(\g)$ for any linear combination $h=c_0+\sum_{i=1}^nc_i h_i$
with integer $c_i$, $i>0$, and arbitrary complex $c_0$.
We denote by $U_q(\h)$ the subalgebra in $U_q(\g)$ generated by $\{t_i^{\pm 1}\}_{i=1}^n$. This $q$-version of the Cartan subalgebra
is the polynomial ring on a torus, while $U(\h)$ is a polynomial ring on a vector space.
Note that $\h\not \subset U_q(\h)$ contrary to $U_\hbar(\h)$, which stands for the subalgebra in
$U_\hbar(\g)$ generated by $\{h_i\}_{i=1}^n$.

Observe that $U_q(\g_{ij})$ is a natural subalgebra in $U_q(\g)$ for any pair $i,j\in[1,n]$ such that $i\leqslant j$.
Here are other subalgebras of importance in $U_\hbar(\g)$.
The elements $e_i$ and $f_i$  generate, respectively, the subalgebras  $U_q(\n^+)$ and $U_q(\n^-)$.
Their $\C[\![\hbar]\!]$-extensions $U_\hbar(\n^\pm)$ are deformations of the classical
universal enveloping algebras $U(\n^\pm)$.
The quantum Borel subalgebras $U_q(\b^\pm)$ are generated by $U_q(\n^\pm)$ over $U_q(\h)$.

All positive roots in $R^+$ are sums $\al_i+\ldots+\al_j$, where $i\leqslant j$.
Put $e_{ii}=e_i$, $f_{ii}=f_i$ and extend this definition inductively by
\be
e_{ij}:=[e_{i+1\sms j},e_i]_q,&&
f_{ij}:=[f_i,f_{i+1\sms j}]_q\nn
\ee
for $i< j$. Here $[x,y]_q$ is the generalized commutator $xy-qyx$.
Along with $e_{ii}$, $f_{ii}$ we will also use the usual notation $e_i$, $f_i$.
Note that the positive and negative root vectors are related via the Chevalley involution,
$
\omega(f_{ij})=e_{ij}
$.

We define $\n^\pm$ in the $q$-case as the linear spans
  $\n^+=\{e_{km}\}_{k\leqslant m}\subset U_q(\g)$ and  $\n^-= \{f_{km}\}_{k\leqslant m}\subset U_q(\g)$.
These are  $U_\hbar(\h)$-submodules, which are trivial deformations of the classical $U(\h)$-modules $\n^\pm\subset \g$.
Similarly, we put $\n_{km}^\pm= \n^\pm \cap U_q (\g_{km})$, so that
$\n_{km}^+=\Span\{e_{ij}\}_{k\leqslant i\leqslant j\leqslant m}$ and $\n_{km}^-=\Span\{f_{ij}\}_{k\leqslant i\leqslant j\leqslant m}$.

\begin{lemma}
\label{basics comms}
Suppose that $k\in (i,j)\subset [1,n]$. Then $[f_k,f_{ij}]=0=[e_k,f_{ij}]$.
Further, $[f_{ij},f_i]_{q}=0$,  $[f_j,f_{ij}]_{q}=0$,
$[e_i,f_{ij}]=f_{i+1\sms j}q^{-h_i}$, $[e_j,f_{ij}]=-qf_{i\sms j-1}q^{h_i}$.
\end{lemma}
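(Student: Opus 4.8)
The plan is to prove all the stated identities by induction on the length $j-i$ of the root $\al_i+\dots+\al_j$, using only the Chevalley and Serre relations together with the inductive definition $f_{ij}=[f_i,f_{i+1\sms j}]_q$ and the parallel definition of $e_{ij}$. The base case $j=i+1$ is a direct computation from the rank-two relations; in particular $[f_{i,i+1},f_i]_q = [[f_i,f_{i+1}]_q,f_i]_q = 0$ is exactly the negative Serre relation rewritten in $q$-bracket form, and similarly $[f_{i+1},f_{i,i+1}]_q=0$. The two "mixed" identities $[e_i,f_{i,i+1}] = f_{i+1}q^{-h_i}$ and $[e_{i+1},f_{i,i+1}] = -q f_i q^{h_{i+1}}$ follow by expanding the bracket, applying $[e_i,f_{i+1}]=0=[e_{i+1},f_i]$ and $[e_i,f_i]=[h_i]_q$, and moving the Cartan factors $t_i^{\pm1}$ past the $f$'s using $t_ie_j=q^{(\al_i,\al_j)}e_jt_i$ etc.; the coefficients $q^{\pm1}$ and $-q$ come out of the weight bookkeeping $(\al_i,\al_{i+1})=-1$, $(\al_i,\al_i)=2$.

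For the inductive step I would treat the four families in a convenient order. First the "far commutativity" $[f_k,f_{ij}]=0$ for $k\in(i,j)$: write $f_{ij}=[f_i,f_{i+1\sms j}]_q$, use that $f_k$ commutes with $f_i$ when $k>i+1$ (and the Serre relation when $k=i+1$), and that $[f_k,f_{i+1\sms j}]=0$ by the inductive hypothesis since $k\in(i,j)$ means $k\in[i+1,j]$, with the boundary cases $k=i+1$ and (if it occurred) $k=j$ handled by the $q$-bracket Serre-type identities; the Jacobi-type expansion of the nested bracket then collapses. The identity $[e_k,f_{ij}]=0$ is the same argument with $e_k$ in place of $f_k$, now using $[e_k,f_i]=0$ for $k\ne i$ inside the range. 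Next $[f_{ij},f_i]_q=0$: this is where I would invoke the already-proven far-commutativity plus a short rewriting $[[f_i,f_{i+1\sms j}]_q,f_i]_q$, reducing it to $[f_{i+1\sms j},f_i]$ type terms that vanish because $f_i$ commutes with all $f_{k m}$ with $k>i+1$ and satisfies the Serre relation with $f_{i+1}$. The companion $[f_j,f_{ij}]_q=0$ is the mirror statement and I would get it either by a symmetric induction peeling off $f_j$ from the other end (using an alternative expansion $f_{ij}=[f_{i\sms j-1},f_j]_{q}$, which should itself be a consequence of the definitions) or directly by applying $\omega$ to a positive-root analogue.

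Finally the two mixed formulas $[e_i,f_{ij}]=f_{i+1\sms j}q^{-h_i}$ and $[e_j,f_{ij}]=-qf_{i\sms j-1}q^{h_i}$ in the inductive step: expand $[e_i,[f_i,f_{i+1\sms j}]_q]$, use $[e_i,f_{i+1\sms j}]=0$ (just proven, since $i\notin[i+1,j]$ — here one must note $[e_i,f_{i+1\sms j}]=0$ holds because $i$ is strictly below the support, which is the $k=i$ boundary case of the vanishing statement and needs the rank-two relation at the joint $\al_i,\al_{i+1}$), together with $[e_i,f_i]=[h_i]_q$ and the commutation of $t_i$ past $f_{i+1\sms j}$; collecting terms and simplifying $[h_i]_q$ against the $q$-shift gives the clean monomial $f_{i+1\sms j}q^{-h_i}$. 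The $e_j$ formula is obtained the same way, expanding instead $e_j$ against the right end $f_{i+1\sms j}$ and using $[e_j,f_{i+1\sms j}]=f_{i+2\sms j}q^{-h_j}$ or the length-one case, plus $[e_j,f_i]=0$. I expect the main obstacle to be purely organizational: keeping the induction hypotheses on the four identities mutually consistent so that each step only uses previously established cases (in particular the boundary values $k=i,j$ of the vanishing statements, which are genuinely the rank-two relations and not instances of "far commutativity"), and being careful with the order of $q$-factors and $t_i$'s so the constants $q^{\pm1}$, $-q$ land correctly. Everything else is a routine manipulation of $q$-commutators.
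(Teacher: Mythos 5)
Your plan is correct and is essentially the paper's own argument: the paper's proof is simply ``direct calculation,'' and your induction on $j-i$ from the Chevalley and Serre relations (with the $q$-Jacobi rearrangements, the reduction of the $k=i+1$ case to a rank-two-type computation via $[f_{i+1\sms j},f_{i+1}]_q=0$, and noting that $[e_i,f_{i+1\sms j}]=0$ is in fact immediate because $e_i$ commutes with every $f_l$, $l\neq i$, so no rank-two relation is needed there) is a sound way to organize exactly that calculation. Note also that your base-case formula $[e_{i+1},f_{i\sms i+1}]=-qf_iq^{h_{i+1}}$ is the correct one: in the lemma's last identity the exponent should read $q^{h_j}$ rather than $q^{h_i}$, in agreement with the paper's later use of $[e_n,f_{1\sms n}]=-qf_{1\sms n-1}q^{h_n}$.
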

\begin{proof}
Direct calculation.
\end{proof}
In what follows, we deal with a general algebraic concept, which we recall here.
Consider a unital associative algebra  $A$ and a non-empty subset $I\subset A$. Let $AI$ denote the left ideal generated
by $I$. We denote by $A^I$ the subset of elements $x\in A$ such that
$Ix\subset AI$. We write simply $A^a$ when $I=\{a\}$ consists of one element $a$. Obviously $A^I$ is not empty,
$A^I\supset AI$, and is
a subalgebra in $A$. It is the normalizer of $AI$, i.e. the maximal subalgebra in $A$ where $AI$ is a two-sided
ideal.  For every $x \in A^I$ the map $x\colon AI \to AIx\subset AI$ amounts to  an
anti-homomorphism $A^I\mapsto \End_A(AI)$, where
the ideal $AI$ is  regarded as a  natural submodule of the regular left $A$-module.
Obviously $x \in A^I$ if and only if $[x,I]\subset AI$.

Similarly one defines the normalizer ${}^I\!\!A$ of the right ideal $IA$ generated by $I$.
As in the left case, $x \in {}^I\!\!A$ if and only if $[x,I]\subset IA$.

In our setting, $A$ will be $U:=U_q(\g)$. If $I$ a subset of simple positive root vectors and $\g'$ is
the corresponding reductive  subalgebra in $\g$, the quotient $A^I/AI$ is the Mickelsson algebra $S(\g,\g')$,
\cite{Mick}.

We finish our introduction to the quantum special linear group with two lemmas that will be used in what follows.
Let $S_m$ denote the symmetric group of permutations of $m$ symbols.
\begin{lemma}
\label{nilp}
Suppose that $m\in [2,n]$. For any $\si\in S_m$, the Chevalley monomial $f_{\si(1)}\ldots f_{\si(m)}$ belongs to $\>\>{}\!\!^{\n^-_{2\sms m}}U$.
Moreover, $f_{\si(1)}\ldots f_{\si(m)}\in {\n^-_{2\sms m}}U$, provided  $\si\not=\id$.
\end{lemma}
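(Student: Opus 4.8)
The plan is to exploit that the two assertions overlap, so that only one extra case needs a separate argument. Since the normalizer $\>\>{}\!\!^{\n^-_{2\sms m}}U$ of the right ideal $\n^-_{2\sms m}U$ always contains that ideal, the ``moreover'' part immediately yields the first assertion for every $\si\neq\id$. Hence it suffices to prove the ``moreover'' part together with the first assertion in the single case $\si=\id$.

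For $\si\neq\id$, let $k$ be the least index with $\si(k)\neq k$. Then $\si$ fixes $1,\dots,k-1$, so it restricts to a permutation of $\{k,\dots,m\}$ and $k+1\leqslant\si(k)\leqslant m$; in particular $f_{\si(k)}\in\n^-_{2\sms m}$. Since $|\si(k)-c|\geqslant(k+1)-(k-1)=2$ for every $c\in[1,k-1]$, the Serre relations give $[f_c,f_{\si(k)}]=0$, so $f_{\si(k)}$ can be moved to the left past the initial segment $f_{\si(1)}\cdots f_{\si(k-1)}=f_1\cdots f_{k-1}$. This rewrites $f_{\si(1)}\cdots f_{\si(m)}$ as $f_{\si(k)}\bigl(f_1\cdots f_{k-1}\,f_{\si(k+1)}\cdots f_{\si(m)}\bigr)$, which lies in $\n^-_{2\sms m}U$.

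For $\si=\id$ the task is to show $[f_1f_2\cdots f_m,f_{ij}]\in\n^-_{2\sms m}U$ for all $2\leqslant i\leqslant j\leqslant m$. I would first record the auxiliary identity $f_{ab}=[f_{a\sms b-1},f_b]_q$, which follows from the definition $f_{ab}=[f_a,f_{a+1\sms b}]_q$ by induction on $b-a$ using the $q$-Jacobi relation $[x,[y,z]_q]_q=[[x,y]_q,z]_q$ (valid whenever $[x,z]=0$, here with $x=f_a$, $z=f_b$, $b\geqslant a+2$). Working modulo the right ideal $\n^-_{2\sms m}U$, an easy induction on $k$ based on $f_{1k}f_{k+1}=f_{1\sms k+1}+q\,f_{k+1}f_{1k}$ and $f_{k+1}\in\n^-_{2\sms m}$ gives $f_1f_2\cdots f_k\equiv f_{1k}$; in particular $f_1\cdots f_m\equiv f_{1m}$. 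Because $f_{ij}\,(f_1\cdots f_m)\in\n^-_{2\sms m}U$ automatically, multiplying the congruence by $f_{ij}$ on the right shows $[f_1\cdots f_m,f_{ij}]\equiv f_{1m}f_{ij}\pmod{\n^-_{2\sms m}U}$, so the whole claim is reduced to verifying $f_{1m}f_{ij}\in\n^-_{2\sms m}U$ for $2\leqslant i\leqslant j\leqslant m$.

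That last point is the only real computation, and I expect it to be the main obstacle. If $j<m$, then $f_{ij}$ is an iterated $q$-commutator of $f_i,\dots,f_j$, all of whose indices lie in $(1,m)$ and hence commute with $f_{1m}$ by Lemma~\ref{basics comms}; since an element commuting with $b$ and $c$ commutes with $bc-qcb$ as well, an induction gives $[f_{1m},f_{ij}]=0$, whence $f_{1m}f_{ij}=f_{ij}f_{1m}\in\n^-_{2\sms m}U$. If $j=m$, then Lemma~\ref{basics comms} gives $[f_m,f_{1m}]_q=0$, i.e.\ $f_{1m}f_m=q^{-1}f_mf_{1m}$, which handles $i=m$; for $i<m$ one writes $f_{im}=[f_{i\sms m-1},f_m]_q$, uses $[f_{1m},f_{i\sms m-1}]=0$ (the indices of $f_{i\sms m-1}$ lie in $(1,m)$, so the case $j<m$ applies), and a short rearrangement gives $f_{1m}f_{im}=q^{-1}f_{im}f_{1m}\in\n^-_{2\sms m}U$. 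In all cases $f_{1m}f_{ij}\in\n^-_{2\sms m}U$, completing the argument. The right-ideal bookkeeping by itself is not enough here precisely because $f_{1m}\notin\n^-_{2\sms m}U$, so the reduction $f_1\cdots f_m\equiv f_{1m}$ only becomes useful after this quasi-commutativity of $f_{1m}$ with the root vectors of $\n^-_{2\sms m}$ has been established.
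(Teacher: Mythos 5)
Your proof is correct, but in the main case $\si=\id$ it takes a genuinely different route from the paper's. The treatment of $\si\not=\id$ (move the first displaced generator $f_{\si(k)}$, whose index is $\geqslant 2$ and which commutes with $f_1,\ldots,f_{k-1}$, to the front) coincides with the paper's argument, as does the observation that ideal membership already gives normalizer membership in that case. For $\si=\id$, the paper argues by induction on $m$: it checks right multiplication only by the simple generators $f_i$, $i\in[2,m]$ (which generate the right ideal), rewriting $f_1\cdots f_m f_i$ via the Serre relation $f_if_{i+1}f_i=\frac{1}{[2]_q}\bigl(f_i^2f_{i+1}+f_{i+1}f_i^2\bigr)$, absorbing one resulting term by the induction hypothesis for $m'=i$ and the other by commuting $f_{i+1}$ to the front. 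You instead first prove the congruence $f_1\cdots f_m\equiv f_{1\sms m}$ modulo the right ideal $\n^-_{2\sms m}U$, and then establish the quasi-commutation of $f_{1\sms m}$ with every $f_{ij}$, $2\leqslant i\leqslant j\leqslant m$ (namely $[f_{1\sms m},f_{ij}]=0$ for $j<m$ and $f_{1\sms m}f_{i\sms m}=q^{-1}f_{i\sms m}f_{1\sms m}$), using Lemma \ref{basics comms} together with the rebracketing identity $f_{ab}=[f_{a\sms b-1},f_b]_q$, which you correctly derive from the $q$-Jacobi identity. Both arguments are complete; yours verifies the normalizer condition against all of $\n^-_{2\sms m}$ directly (so you never need the implicit remark that the simple $f_k$, $k\in[2,m]$, generate the right ideal), and it produces the stronger explicit facts $f_1\cdots f_m\equiv f_{1\sms m}$ and the $q$-commutativity of $f_{1\sms m}$ with $\n^-_{2\sms m}$, whereas the paper's induction is shorter and needs nothing beyond the Serre relations.
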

\begin{proof}
Consider the case $\si=\id$ first, using induction on $m$.
For $m=2$ the statement immediately follows from the Serre relations:
$(f_1f_2) f_2=f_2([2]_qf_1f_2-f_2f_1)\in {\n^-_{2\sms m}}U$, while the second statement is obvious.
Suppose that $m>2$ and the lemma has been proved for all $i$ from the interval $[2,m)$.
Then, for such $i$, the Serre relations give
$$
f_1\ldots f_{m} f_i=f_1\ldots f_{i}f_{i+1}f_i\psi=
\frac{1}{[2]_q}f_1\ldots f_{i-1}f_{i}^2f_{i+1}\psi
+\frac{1}{[2]_q}f_1\ldots f_{i-1}f_{i+1}f_i^2\psi,
$$
where $\psi=f_{i+2}\ldots f_{m}$ and $\psi=1$ if $i=m-1$.
By the induction assumption, the first term belongs to $ \n^-_{2\sms i}U$.
In the second term, $f_{i+1}$ commutes with $f_1\ldots f_{i-1}$. Therefore, the second term belongs to $f_{i+1}U\subset \n^-_{2\sms m}U$, and the sum lies in $ \n^-_{2\sms m}U$.
For $i=m$, we have
$$
f_1\ldots f_{m}f_{m}=[2]_qf_1\ldots f_{m-2}f_{m}f_{m-1}f_{m}-f_1\ldots f_{m-2}f_{m}^2f_{m-1}
\in f_{m}U\subset \n^-_{2\sms m}U.
$$
This proves the statement for all $m$ and  $\si=\id$.

Suppose that $\si\not= \id$.
The statement is obvious if $\si(1)\not =1$. Otherwise let $i\in[2,m)$ be the least integer such that $\si(i)\not= i$. Put $\psi=f_{\si(i+1)}\ldots f_{\si(m)}$.
Then
$$f_{\si(1)}\ldots f_{\si(m)}=f_{1}\ldots f_{i-1}f_{\si(i)}\psi=f_{\si(i)}f_{1}\ldots f_{i-1}\psi
\in \n^-_{2\sms m}U,
$$
as $\si(i)>i\geqslant 2$. This proves the statement  for $\si\not= \id$.
\end{proof}
\begin{lemma}
\label{e1kf1m}
Suppose that integers $i,j,k,m\in [1,n]$ satisfy the inequalities  $i\leqslant j\leqslant k<m$. Then for all $u\in U_q(\n^+_{i\sms k})$, $[u,f_{j\sms m}]\in \n^-_{j+1\sms m}U$.
\end{lemma}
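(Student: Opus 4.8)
The plan is to exhibit a right ideal $\K\subseteq\n^-_{j+1\sms m}U$ that is stable under left multiplication by $U_q(\n^+_{i\sms k})$ and contains the ``seed'' commutators $[e_l,f_{j\sms m}]$ for $l\in[i,k]$; a short Leibniz argument then forces $[u,f_{j\sms m}]\in\K$ for every $u\in U_q(\n^+_{i\sms k})$. The correct choice turns out \emph{not} to be $\n^-_{j+1\sms m}U$ itself, but the smaller right ideal $\K:=\sum_{a=j+1}^{m}f_{a\sms m}U$ generated by the chain $f_{j+1\sms m},f_{j+2\sms m},\dots,f_{m-1\sms m},f_{m\sms m}=f_m$. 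Since $j+1\le a\le m$ gives $f_{a\sms m}\in\n^-_{j+1\sms m}$, we have $\K\subseteq\n^-_{j+1\sms m}U$, so proving the stronger statement $[u,f_{j\sms m}]\in\K$ suffices.

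The two facts I would establish for each generator $e_l$, $l\in[i,k]$, are: (1) $e_l\K\subseteq\K$, and (2) $[e_l,f_{j\sms m}]\in\K$. For (1), as $\K$ is a right ideal it is enough to show $e_lf_{a\sms m}\in\K$ for $j+1\le a\le m$; writing $e_lf_{a\sms m}=f_{a\sms m}e_l+[e_l,f_{a\sms m}]$, the first summand lies in $f_{a\sms m}U\subseteq\K$, and by Lemma~\ref{basics comms} — together with the observation that $l\le k<m$ forces $l\ne m$, and, for $l<a$, the fact that $e_l$ commutes with $f_a,\dots,f_m$ — the commutator $[e_l,f_{a\sms m}]$ is either $0$ or $f_{a+1\sms m}q^{-h_a}$, both of which lie in $\K$. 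For (2): if $l=j$ then $[e_j,f_{j\sms m}]=f_{j+1\sms m}q^{-h_j}\in\K$ by Lemma~\ref{basics comms}; if $j<l\le k$ then $l\in(j,m)$ and $[e_l,f_{j\sms m}]=0$; and if $i\le l<j$ then $e_l$ commutes with each of $f_j,\dots,f_m$, hence with $f_{j\sms m}$.

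To finish, note that $P:=\{x\in U\mid x\K\subseteq\K,\ [x,f_{j\sms m}]\in\K\}$ is a unital subalgebra of $U$: it contains $1$, and for $x,y\in P$ the identity $[xy,f_{j\sms m}]=x[y,f_{j\sms m}]+[x,f_{j\sms m}]y$ keeps us in $\K$ (the first term by $x\K\subseteq\K$, the second because $\K$ is a right ideal), while $(xy)\K=x(y\K)\subseteq\K$. Facts (1)–(2) say $e_l\in P$ for every $l\in[i,k]$, and $U_q(\n^+_{i\sms k})$ is generated by $e_i,\dots,e_k$, so $U_q(\n^+_{i\sms k})\subseteq P$; hence $[u,f_{j\sms m}]\in\K\subseteq\n^-_{j+1\sms m}U$ for all $u\in U_q(\n^+_{i\sms k})$.

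The main obstacle is the very first step: recognising that $\n^-_{j+1\sms m}U$ must be shrunk to $\K$. The naive choice — $P$ with $\n^-_{j+1\sms m}U$ in place of $\K$ — fails, because $\n^-_{j+1\sms m}U$ is not left-stable under the $e_l$ with $l\in[j+1,k]$: there $e_lf_l=f_le_l+[h_l]_q$ and the Cartan term $[h_l]_q$ does not lie in $\n^-_{j+1\sms m}U$. The reason $\K$ works is that every one of its generators $f_{a\sms m}$ has right endpoint $m$, and $m>k$; so an available positive generator can only shorten $f_{a\sms m}$ from the left (never meeting the forbidden $[e_m,f_m]=[h_m]_q$) and never exposes a bare simple root vector $f_b$ with $b\le k$ on which $e_b$ would act to produce a Cartan element. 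Once $\K$ is chosen, steps (1)–(3) are routine verifications with Lemma~\ref{basics comms} and the Leibniz rule.
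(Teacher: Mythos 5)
Your proof is correct, and it is organized differently from the paper's. The paper argues by induction on the degree of a Chevalley monomial $u=u'e_l$, peeling the generator off the \emph{right}: $uf_{j\sms m}=u'f_{j\sms m}e_l+\dt_{jl}\,u'f_{j+1\sms m}q^{-h_j}$, and then applies the inductive hypothesis to $u'$ twice, once against $f_{j\sms m}$ and once against $f_{j+1\sms m}$ (i.e.\ with $j$ shifted to $j+1$); since in that scheme the ideal $\n^-_{j+1\sms m}U$ is only ever multiplied on the right, no left-stability is needed and the statement is proved for $\n^-_{j+1\sms m}U$ itself. You instead strengthen the conclusion to membership in the smaller right ideal $\K=\sum_{a=j+1}^m f_{a\sms m}U$, check that $\K$ is stable under left multiplication by $e_i,\dots,e_k$ (the restriction $l\leqslant k<m$ being exactly what excludes the Cartan term $[h_m]_q$ and the vector $f_{a\sms m-1}$), and close by noting that the set of $x$ with $x\K\subseteq\K$ and $[x,f_{j\sms m}]\in\K$ is a unital subalgebra containing the generators of $U_q(\n^+_{i\sms k})$. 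Both arguments have the same computational core (Lemma~\ref{basics comms} plus the Leibniz rule); yours yields the slightly sharper fact $[u,f_{j\sms m}]\in\sum_{a>j}f_{a\sms m}U$ and avoids the two-parameter induction, at the cost of the extra stability verification. One remark: your assertion that the ``naive choice'' $\n^-_{j+1\sms m}U$ fails pertains only to your normalizer-style argument; as the paper's proof shows, one can work with $\n^-_{j+1\sms m}U$ directly by peeling generators off the right end of the monomial, so that the ideal is never multiplied by $e_l$ on the left.
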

\begin{proof}
Introduce a grading in
$U_q(\n^+)$ setting $\deg e_j=1$ for all $j\in [1,n]$. Let $u\in U_q(\n^+_{i\sms k})$ be a Chevalley monomial.
The statement is trivial for zero degree $u$, so we assume $\deg u>0$.
Present $u$ as a product $u=u'e_l$ for some $e_l, u'\in  U_q(\n^+_{i\sms k})$.
If $\deg u'=0$ and $u=e_l$, then the statement follows from the formula
$[e_l,f_{j\sms m}]=\dt_{jl}f_{j+1\sms m}q^{-h_j}$, cf. Lemma \ref{basics comms}.
For $\deg u\geqslant 1$, induction on $\deg u$ gives
$$
u f_{j\sms m}=u' f_{j m} e_l+ \dt_{jl} u'f_{j+1\sms m}q^{-h_j}
\in
f_{j m} u' e_l+ \n^-_{j+1\sms m}U+ \dt_{jl} f_{j+1\sms m} u'q^{-h_j}+ \n^-_{j+2\sms m}U,
$$
where the last summand is present only if $j+2\leqslant m$. The right-hand side is contained in
$f_{j m} u+ \n^-_{j+1\sms m}U$, as required.
\end{proof}

\section{Dynamical root vectors}
\label{DRV}
We set up an ordering on  positive  root vectors $e_{ij}$ induced by the lexicographic ordering on
 pairs $(i,j)$, $i\leqslant j$. The negative root vectors $f_{ij}$ are ordered in the opposite way.
These orderings are normal and compatible with a reduced decomposition
of the maximal element in the Weyl group of $\g$. The ordered systems of root vectors generate a PBW basis
in the algebras $U_q(\n^\pm)$, \cite{ChP}.
The Shapovalov form, which is the subject of our interest, is very complicated in
this basis. We need a new basis suitable for our study, possibly on the extension of $U_q(\g)$ over
the ring of factions of $U_q(\h)$ over some multiplicative system. This basis is introduced in this section.

Put $h_{ik}:=h_i+\ldots +h_k+k-i$ for all positive integer $i,k$ such that $i\leqslant k$. The difference $k-i$
is equal to $(\rho,\al_{ik})-1$, where $\al_{ik}=\al_i+\ldots+\al_k\in R^+$.
We define dynamical root vectors $\hat f_{ik}\in U_q(\b^-)$ and $\hat e_{ik}\in U_q(\b^+)$  for all pairs $i,k\in[1,n]$ of integers
such that $i\leqslant k$.
For $k=i\in [1,n]$ we put $\hat e_{ii}=e_i$ and $\hat f_{ii}=f_i$. For  $i< k$
we proceed recursively by
$$
\hat e_{ik}=q^{-1}[h_{i+1\sms k}]_q[\hat e_{i+1\sms  k},e_i]_{q}+q^{h_{i+1\sms  k}}\hat e_{i+1\sms k}e_i,
\quad
\hat f_{ik}=q^{-1}[f_i,\hat f_{i+1\sms k}]_{q}[h_{i+1\sms k}]_q+f_i\hat f_{i+1\sms k}q^{h_{i+1\sms k}},
$$
The right-hand side can be expressed through "generalized commutators" with coefficients from the Cartan subalgebra.
For instance,
$$
\hat e_{ik}=[h_{i+1\sms k}+1]_q\hat e_{i+1\sms k}e_{i}-[h_{i+1\sms k}]_qe_{i}\hat e_{i+1\sms k},
\quad
\hat f_{ik}=f_{i}\hat f_{i+1\sms k}[h_{i+1\sms k}+1]_q-\hat f_{i+1\sms k}f_{i}[h_{i+1\sms k}]_q.
$$
Note that the Cartan coefficients in $\hat e_{i+1\sms k}$ commute with $e_{i}$ and can be gathered on the left.
Similarly to the standard root vectors, $\omega(\hat e_{ij})=\hat f_{ij}$.

The name dynamical follows the analogy with the dynamical Yang-Baxter equation from the mathematical physics literature,
\cite{ES}.
In a representation, the Cartan coefficients are specialized at the weight of a particular vector the elements
$\hat e_{ij}$ and $\hat f_{ij}$ act upon.  This dependence on the weight is "dynamical" rather than "statical" since the Cartan coefficients are not central in $U_q(\g)$.

The key properties of dynamical root vectors are described by the following proposition.
\begin{propn}
\label{key}
For all integer $i,j,k\in [1,n]$ such that $i<j,k$,
$$
\hat f_{ij}\in U^{e_k},\quad [e_i,\hat f_{ij}]=\hat f_{i+1\sms  j}[h_{ij}]_q\mod Ue_i,
$$
$$
\hat e_{ij}\in \>\>{}\!\!^{f_k}U,\quad [\hat e_{ij},f_i]=[h_{ij}]_q\hat e_{i+1\sms j}\mod f_iU.
$$
\end{propn}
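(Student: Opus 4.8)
The plan is to prove the two displayed assertions about $\hat f_{ij}$ by induction on $j-i$; the statements about $\hat e_{ij}$ then follow by applying the Chevalley involution $\omega$, since $\omega(\hat e_{ij})=\hat f_{ij}$, $\omega(e_k)=f_k$, $\omega$ reverses products, and $\omega$ fixes the Cartan (so $[h_{ij}]_q$ is $\omega$-invariant). So from now on I focus on the negative dynamical root vectors. The base case $j=i+1$ is a direct computation from $\hat f_{i\sms i+1}=f_i f_{i+1}[h_{i+1}+1]_q-f_{i+1}f_i[h_{i+1}]_q$ together with the Chevalley relations $[e_k,f_l]=\dt_{kl}[h_k]_q$; one checks $[e_k,\hat f_{i\sms i+1}]\in Ue_i$ for $k\ne i$ using $[e_k,f_i]=[e_k,f_{i+1}]=0$ when $k\notin\{i,i+1\}$ and a short explicit cancellation when $k=i+1$, and one extracts $[e_i,\hat f_{i\sms i+1}]\equiv \hat f_{i+1\sms i+1}[h_{i\sms i+1}]_q=f_{i+1}[h_i+h_{i+1}+1]_q \bmod Ue_i$.

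For the inductive step I would use the recursion $\hat f_{ik}=f_i\hat f_{i+1\sms k}[h_{i+1\sms k}+1]_q-\hat f_{i+1\sms k}f_i[h_{i+1\sms k}]_q$ and commute $e_k$ past it. The induction hypothesis gives $\hat f_{i+1\sms k}\in U^{e_m}$ for every $m>i+1$, i.e. $[e_m,\hat f_{i+1\sms k}]\in U e_{i+1}$; note also $[e_m, f_i]=0$ unless $m\in\{i-1,i\}$. Three ranges of the index $k$ (in the statement the index is called $k$ but it plays the role of the ``test'' simple root, so let me call it $r$) must be separated: $r>i+1$, where the hypothesis applies directly and one must additionally observe that $Ue_{i+1}$ gets absorbed into $Ue_i$ after multiplying by the leftmost $f_i$ and using $[h]_q$-commutations — this is where Lemma \ref{basics comms}, specifically $[e_i,f_{i\sms j}]=f_{i+1\sms j}q^{-h_i}$ type identities, will be needed to push residual terms back into $Ue_i$; the case $r=i+1$, which needs $[e_{i+1},\hat f_{i+1\sms k}]=\hat f_{i+2\sms k}[h_{i+1\sms k}]_q \bmod Ue_{i+1}$ from the induction hypothesis plus a bookkeeping of the $f_i$ on the outside; and the case $r=i$, which is exactly the second assertion. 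For $r=i$ one computes $[e_i,\hat f_{ik}]$ from the recursion: $e_i$ commutes with $\hat f_{i+1\sms k}$ (only $e_i$ with $f_i$ produces $[h_i]_q$), so $[e_i,\hat f_{ik}]=[h_i]_q\hat f_{i+1\sms k}[h_{i+1\sms k}+1]_q-\hat f_{i+1\sms k}[h_i]_q[h_{i+1\sms k}]_q \bmod Ue_i$, and one simplifies the Cartan factors. Here one uses that $[h_i]_q$ conjugated past $\hat f_{i+1\sms k}$ shifts $h_i\mapsto h_i - (\text{number of }f\text{'s in }\hat f_{i+1\sms k}\text{ contributing})$; more precisely $t_i \hat f_{i+1\sms k} = q^{-(\al_i,\al_{i+1}+\cdots+\al_k)}\hat f_{i+1\sms k} t_i = q \hat f_{i+1\sms k} t_i$ since $(\al_i,\al_{i+1})=-1$ and $(\al_i,\al_m)=0$ for $m>i+1$. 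Substituting $h_i \mapsto h_i-1$ in the first term's $[h_i]_q$ turns $[h_i]_q[h_{i+1\sms k}+1]_q - [h_i]_q[h_{i+1\sms k}]_q$ (after aligning) into $[h_i+h_{i+1}+\cdots+h_k + (k-i)]_q = [h_{ik}]_q$ via the quantum number identity $[a-1]_q[b+1]_q-[a]_q[b]_q$... one must be slightly careful: the cleaner route is $[a]_q[b]_q-[a']_q[b']_q$ telescoping with $a+b = a'+b'$, giving a single quantum bracket. This Cartan-algebra simplification is the one genuinely delicate bit.

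The main obstacle I expect is precisely the normalizer bookkeeping in the range $r>i+1$: showing that the ``error terms'' produced when commuting $e_r$ through $\hat f_{i+1\sms k}$, which a priori live only in $Ue_{i+1}$, actually end up in $Ue_i$ once the outer factor $f_i$ and the right-multiplication by $f_i$ in the two summands of the recursion are accounted for. The identity $f_i \cdot (Ue_{i+1})$ versus $Ue_i$ is not automatic; one has to commute $e_{i+1}$ leftwards past the intervening generators. The tool for this is the observation $[e_{i+1}, f_i]=0$ together with, where $e_{i+1}$ meets an $f_{i+1}$-type factor, the bracket formulas of Lemma \ref{basics comms}, plus the trivial fact that $e_i \in Ue_i$ and $e_{i+1}e_i = e_i e_{i+1}$ (Serre, $|i-(i+1)|=1$ gives a relation, but actually $[e_i,e_{i+1}]\ne 0$; instead one uses that $e_{i+1}$ already sits to the right and only $e_i$ needs to be produced). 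In fact the cleanest formulation: $U^{e_i}$ is a subalgebra and $U^{e_{i+1}}\cap \{x : [f_i,x]\in\text{(manageable)}\}$... — I would phrase the step as: since $f_i\in U^{e_i}$ trivially and $\hat f_{i+1\sms k}$ commutes with $e_r$ modulo $Ue_{i+1}\subset$ (after one more commutation) $Ue_i$, and since the recursion writes $\hat f_{ik}$ as a sum of products of such elements with Cartan coefficients (which normalize every $Ue_m$), we get $[e_r,\hat f_{ik}]\in Ue_i$. Making ``one more commutation'' precise — i.e. that $U e_{i+1} \subseteq U^{e_i} e_i + $ lower terms is false in general, so one instead shows directly that the specific residual terms, which are of the form $f_i \cdot (\text{word}) \cdot e_{i+1}$ with the word not involving $f_i$, can be rewritten — is the crux, and I would handle it by an auxiliary sublemma in the same spirit as Lemma \ref{e1kf1m}, tracking that $[e_i, f_{i+1\sms m}]=0$ so $e_i$ can be freely inserted, hence $f_i(\cdots)e_{i+1} = f_i(\cdots) [e_i,f_{?}]^{\pm}\cdots$ — or, more honestly, by noting $\hat f_{i+1\sms k}\in U_q(\b^-_{i+1\sms k})$ and $e_i$ (anti)commutes appropriately with $U_q(\b^-_{i+2\sms k})$, reducing everything to the $f_{i+1}$-adjacent terms already covered by Lemma \ref{basics comms}.
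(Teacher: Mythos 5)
Your overall architecture---induction on $j-i$, reduction of the $\hat e$-statements to the $\hat f$-statements via the Chevalley involution, and commuting the test generator through the recursion defining $\hat f_{ij}$---is the same as the paper's, but the execution has a genuine gap. You misstate the induction hypothesis: $\hat f_{i+1\sms j}\in U^{e_m}$ means $[e_m,\hat f_{i+1\sms j}]\in Ue_m$, the left ideal generated by $e_m$ \emph{itself}, not $Ue_{i+1}$. Because of this, the difficulty you single out as the ``main obstacle''---absorbing $Ue_{i+1}$ into $Ue_i$ when the test index $r$ satisfies $r\geqslant i+2$---is not the problem at all: for such $r$ the target is $[e_r,\hat f_{ij}]\in Ue_r$, and this is immediate from the recursion, since $[e_r,f_i]=0$ and the Cartan coefficients only conjugate $e_r$ by powers of $q$, so $Ue_r\,f_i\subset Ue_r$ and $Ue_r\,U_q(\h)\subset Ue_r$. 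Your final paragraph, including the proposed auxiliary sublemma in the spirit of Lemma \ref{e1kf1m}, is therefore spent resolving a non-issue.

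Meanwhile the genuinely delicate case, $r=i+1$, is dismissed as ``bookkeeping of the $f_i$ on the outside'' and never carried out. There one must substitute the refined induction formula $[e_{i+1},\hat f_{i+1\sms j}]=\hat f_{i+2\sms j}[h_{i+1\sms j}]_q$ modulo $Ue_{i+1}$ into \emph{both} terms of the recursion and verify that the resulting Cartan coefficient of $f_i\hat f_{i+2\sms j}$ vanishes identically (the cancellation $[h]_q\bigl(([h]_q-q[h+1]_q)q^{-1}+q^{h}\bigr)=0$ with $h=h_{i+1\sms j}$); without this computation the inclusion $[e_{i+1},\hat f_{ij}]\in Ue_{i+1}$ is unproven. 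Finally, in the case $r=i$ your Cartan shift goes the wrong way: since $t_i\hat f_{i+1\sms j}=q\,\hat f_{i+1\sms j}t_i$, pushing $[h_i]_q$ to the right of $\hat f_{i+1\sms j}$ replaces $h_i$ by $h_i+1$, not $h_i-1$, and the identity you need is $[h_i+1]_q[h_{i+1\sms j}+1]_q-[h_i]_q[h_{i+1\sms j}]_q=[h_{ij}]_q$; with your shift one would obtain $[h_i-h_{i+1\sms j}-1]_q$, which is not the claimed answer. These defects are fixable, but as written the normalizer statement for $r=i+1$ and the concluding Cartan identity are respectively missing and incorrect.
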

\begin{proof}
We will check only the first line. The second line is obtained from it via the Chevalley involution.

It is obvious that $\hat f_{i j}\in U^{e_k}$ for $k>j$, so we assume $i<k\leqslant j$. For $j={i+1}$ we have
$
[e_{j},\hat f_{ij}]=[f_i,[h_{j}]_{q}]_{q}q^{-1}[h_{j}]_q+f_i [h_{j}]_qq^{h_{j}}
$
modulo  $Ue_j$.
The retained terms give
$$
f_i\bigl(([h_j]_{q}-q[h_j+1]_{q})q^{-1}[h_j]_q+q^{h_j}[h_j]_q\bigr)=0,
$$
hence $[e_j,\hat f_{ij}]\in Ue_j$, as required.
For the right equality in the first line, we have
$$
[e_i,\hat f_{ij}]=  [[h_i]_q,f_j]_{q} q^{-1}[h_j]_q+[h_i]_q f_jq^{h_j} +\ldots
=  f_jq^{-h_i} q^{-1}[h_j]_q+ f_j [h_i+1]_{q}q^{h_j}\ldots
$$
where we have omitted the terms from $Ue_i$. Modulo those terms, the last expression  is
equal to $f_j[h_i+h_j+1]_{q}=f_{j}[h_{ij}]_{q}$ for $j=i+1$.
This proves the proposition for $j=k=i+1$.

Further we do induction on $j-i$. The case $j-i=1$ is already done. Suppose
that the proposition is proved for $j-i$ up to $l-1>0$. Then
$[e_k,\hat f_{i+1\sms j}]\in Ue_k$ for $k\in [i+2,j]$. This immediately implies
the inclusion $[e_k,\hat f_{ij}]\in U^{e_k}$ for such $k$, thanks to the recursive presentation
of $\hat f_{ij}$ through $\hat f_{i+1\sms j}$. For $k=i+1$ we have
$$
[e_{k},\hat f_{ij}]=[f_i,[e_{k},\hat f_{kj}]]_{q}q^{-1}[h_{kj}]_q
+f_i[e_{k},\hat f_{kj}]q^{h_{kj}}+\ldots,
$$
where the omitted terms lie in $Ue_{k}$. By the induction assumption, the remaining terms give
$$
[f_i,\hat f_{k+1\sms j}[h_{kj}]_q]_{q}q^{-1}[h_{kj}]_q
+
f_i\hat f_{k+1\sms j}q^{h_{kj}}[h_{kj}]_q,
$$
up to the terms from $Ue_{i+1}$.
This is equal to the product of $f_i\hat f_{k+1\sms j}$ (observe that $f_i$ commutes with $\hat f_{k+1\sms j}=\hat f_{i+2\sms j}$) and the Cartan factor
$$
[h_{kj}]_q\bigl(([h_{kj}]_q-q^{}[h_{kj}+1]_q\bigr)q^{-1}
+
q^{h_{kj}})=0.
$$
Therefore, $[e_{i+1},\hat f_{ij}]\in Ue_{i+1}$, as required.

To complete the induction, we need to check the rightmost equality:
\be
[e_i,\hat f_{ij}]
&=&
[[h_i]_q,\hat f_{i+1\sms j}]_{q}q^{-1}[h_{i+1\sms j}]_q+[h_i]_q\hat f_{i+1\sms j}q^{h_{i+1\sms j}}+\ldots
\nn\\
&&\hat f_{i+1\sms j}\Bigl(\bigr([h_i+1]_q-q[h_i]_q\bigr)q^{-1}[h_{i+1\sms j}]_q+[h_i+1]_qq^{h_{i+1\sms j}}\Bigr)+\ldots
\ee
where we have dropped the terms from $Ue_i$. The Cartan factor in the brackets  is
$$
q^{-h_i}q^{-1}[h_{i+1\sms j}]_q+[h_i+1]_qq^{h_{i+1\sms j}}=[h_{ij}]_q.
$$
This completes the induction on $l=j-i$ and the proof of the proposition.
\end{proof}

Let $h_\al\in \h$ denote the element determined by $\al(h_\al)=(\la,\al)$ for all $\la\in \h^*$.
Consider the multiplicative system in $U_q(\h)$ generated by $[h_{\al}+m]_q$, $\al\in R^+$, $m\in \Z$,
and denote by $\hat U_q(\h)$ the ring of fractions of $U_q(\h)$ over this system. One can check
that there is a natural extension, $\hat U_q(\g)$, of $U_q(\g)$ over $\hat U_q(\h)$.
The algebra $\hat U_q(\g_{ij})$  contains an idempotent $p_{i\sms j}$ of zero weight  such that
$p_{ij}\hat U_q(\g)=\{x\in \hat U_q(\g)\colon \n_{ij}^+x=0\}$, $\hat U_q(\g)p_{ij}=\{x\in \hat U_q(\g)\colon x\n_{ij}^-=0\}$, \cite{AST,KT}.
It is called  extremal projector of the subalgebra $\hat U_q(\g_{ij})$.

\begin{propn}
The vector $\hat f_{ij}p$ is equal to $p f_{ij}\prod_{l=i+1}^{j}[h_{l\sms j}+1]_q$,
 where $p=p_{i+1\sms j}$.
\end{propn}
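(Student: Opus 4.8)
The plan is to prove the identity by induction on $j-i$, turning $\hat f_{ij}p$ into $\hat f_{i+1\sms j}p'$ by means of the recursion that defines $\hat f_{ij}$. Write $p=p_{i+1\sms j}$ and $p'=p_{i+2\sms j}$ (with $p'=1$ in the boundary case $j=i+1$); the base of the induction is immediate. At the outset I would record the following properties of the extremal projectors: (a) $p$ and $p'$ have weight zero, hence commute with $U_q(\h)$, in particular with every Cartan coefficient $[h_{l\sms j}+1]_q$ and with $q^{h_{i+1\sms j}+1}$; (b) $f_i$ commutes with $p'$ and with $\prod_{l=i+2}^j[h_{l\sms j}+1]_q$, because $(\al_m,\al_i)=0$ for all $m\geqslant i+2$; (c) $p'p=p=pp'$, which follows from $\n^+_{i+2\sms j}p=0$ and $p\,\n^-_{i+2\sms j}=0$; (d) $\n^+_{i+1\sms j}p=0$ and $p\,\n^-_{i+1\sms j}=0$, in particular $pf_{i+1\sms j}=0$.

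Two consequences are worth isolating before the computation. First, Proposition~\ref{key} gives $\hat f_{ij}\in U^{e_k}$ for every $k>i$, so $\hat f_{ij}$ lies in the normalizer of the left ideal generated by $\n^+_{i+1\sms j}$; with (d) this yields $\n^+_{i+1\sms j}\hat f_{ij}p=0$, i.e. $\hat f_{ij}p=p\hat f_{ij}p$, and then $\hat f_{ij}p=p'\hat f_{ij}p$ by (c). Second, by Lemma~\ref{basics comms} the root vector $f_{ij}$ $q$-commutes with $f_{i+1},\dots,f_j$, hence with every element of $\n^-_{i+1\sms j}$, so $pf_{ij}\,\n^-_{i+1\sms j}\subset p\,\n^-_{i+1\sms j}\hat U=0$, whence $pf_{ij}p=pf_{ij}$. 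This last identity is what puts the final answer into the stated shape.

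The core is a direct computation of $p'\hat f_{ij}p$. Substituting $\hat f_{ij}=f_i\hat f_{i+1\sms j}[h_{i+1\sms j}+1]_q-\hat f_{i+1\sms j}f_i[h_{i+1\sms j}]_q$, sliding $p$ and $p'$ past the Cartan factors by (a), sliding $f_i$ past $p'$ and past $\prod_{l=i+2}^j[h_{l\sms j}+1]_q$ by (b), and replacing $\hat f_{i+1\sms j}p'$ by $p'f_{i+1\sms j}\prod_{l=i+2}^j[h_{l\sms j}+1]_q$ by the inductive hypothesis, the two terms of the recursion become $p'f_if_{i+1\sms j}\,p\prod_{l=i+1}^j[h_{l\sms j}+1]_q$ and $-\,p'f_{i+1\sms j}f_i\,p\prod_{l=i+2}^j[h_{l\sms j}+1]_q[h_{i+1\sms j}]_q$. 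Now use $f_if_{i+1\sms j}=f_{ij}+qf_{i+1\sms j}f_i$ together with the elementary identity $q[x+1]_q-[x]_q=q^{x+1}$ at $x=h_{i+1\sms j}$; the $f_{i+1\sms j}f_i$-contributions collapse, leaving
$$
\hat f_{ij}p=p'f_{ij}\,p\prod_{l=i+1}^j[h_{l\sms j}+1]_q+p'f_{i+1\sms j}f_i\,p\,q^{h_{i+1\sms j}+1}\prod_{l=i+2}^j[h_{l\sms j}+1]_q .
$$
Multiplying on the left by $p$ leaves the left-hand side unchanged since $\hat f_{ij}p=p\hat f_{ij}p$; on the right $pp'=p$ by (c), and $pf_{i+1\sms j}=0$ by (d) kills the second summand, so $\hat f_{ij}p=pf_{ij}p\prod_{l=i+1}^j[h_{l\sms j}+1]_q$. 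Finally $pf_{ij}p=pf_{ij}$ completes the induction.

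The step I expect to need the most care is not any individual identity but the bookkeeping of which Cartan elements commute past which of $p$, $p'$ and $f_i$, together with the correct handling of the ``extra'' term $p'f_{i+1\sms j}f_i\,p\,q^{h_{i+1\sms j}+1}\prod_{l=i+2}^j[h_{l\sms j}+1]_q$: it does not vanish on its own and is only annihilated after left multiplication by $p$. The other point requiring a short separate verification is $pf_{ij}p=pf_{ij}$, since Lemma~\ref{basics comms} treats only the simple generators of $\n^-_{i+1\sms j}$ and one must extend the relevant $q$-commutation of $f_{ij}$ to the whole space $\n^-_{i+1\sms j}$ by iterating over the $q$-bracket presentation of the $f_{km}$.
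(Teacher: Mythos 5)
Your proof is correct, but it takes a genuinely different route from the paper's. The paper argues by uniqueness: $\hat f_{ij}p$ lies in $pU_q(\b^-)p$ (via Proposition~\ref{key}), $pf_{ij}$ lies there too (via Lemma~\ref{nilp}), the component of $pU_q(\b^-)p$ of the relevant weight is one-dimensional over the extended Cartan subalgebra, being spanned by $pf_i f_{i+1}\cdots f_jp$, and the factor $\prod_{l=i+1}^{j}[h_{l\sms j}+1]_q$ is then simply read off as the Cartan coefficient with which the monomial $f_if_{i+1}\cdots f_j$ enters $\hat f_{ij}$. You instead run an induction on $j-i$ through the recursion $\hat f_{ij}=f_i\hat f_{i+1\sms j}[h_{i+1\sms j}+1]_q-\hat f_{i+1\sms j}f_i[h_{i+1\sms j}]_q$, using only the properties of the extremal projectors stated in the paper (idempotency, zero weight, the two kernel characterizations, hence $p_{i+2\sms j}\,p=p=p\,p_{i+2\sms j}$ and $p\,\n^-_{i+1\sms j}=0$), Proposition~\ref{key} to justify $\hat f_{ij}p=p\hat f_{ij}p$, and the identity $q[x+1]_q-[x]_q=q^{x+1}$ to collapse the $f_{i+1\sms j}f_i$ contributions; I checked the bookkeeping, including your intermediate display with the extra summand $p'f_{i+1\sms j}f_i\,p\,q^{h_{i+1\sms j}+1}\prod_{l=i+2}^j[h_{l\sms j}+1]_q$ that dies only after left multiplication by $p$, and it is right, as is the closing identity $pf_{ij}p=pf_{ij}$ (which the paper's own use of Lemma~\ref{nilp} also delivers). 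What your argument buys is independence from the one-dimensionality claim for the weight space of $pU_q(\b^-)p$, which the paper asserts rather tersely, at the price of a longer computation; the paper's argument is shorter but leans on that structural fact. If you write yours up, spell out one small step: passing from $\hat f_{ij}\in U^{e_k}$ for the simple generators $e_k$, $k\in(i,j]$, to $\n^+_{i+1\sms j}\hat f_{ij}p=0$ uses that the compound root vectors of $\n^+_{i+1\sms j}$ are polynomials in the simple ones, so that $e_{lm}\hat f_{ij}p$ reduces to words ending in some $e_c\hat f_{ij}p=0$.
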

\begin{proof}
By construction,
 $\hat f_{ij}p$ belongs to $p U_q(\b^-)$ and hence to $p U_q(\b^-)p$.  By Lemma \ref{nilp},
$p f_{ij}$ belongs to $U_q(\b^-)p$ and hence to $p U_q(\b^-)p$. On the other hand,
$p f_{ij}p=p f_{\al_i}\ldots f_{\al_j}p$ is a unique, up to a scalar factor, vector of weight $\al_{ij}$ in $p U_q(\g_-)p $. Now observe that $f_{\al_i}\ldots f_{\al_j}$ enters $\hat f_{ij}$ with the Cartan coefficient $\prod_{l=i+1}^{j}[h_{l\sms j}+1]_q$.
\end{proof}

It follows that $\hat e_{ij}$ and $\hat f_{ij}$ generate a PBW basis in $\hat U_q(\g)$ over $\hat U_q(\h)$.

\section{Verma modules}
Thanks to a PBW basis, the algebra $U_q(\g)$ is a free
$U_q(\n^-)-U_q(\n^+)$-bimodule  generated by $U_q(\h)$.
The triangular factorization $U_q(\g)=U_q(\n^-)U_q(\h)U_q(\n^+)$ gives rise to the direct sum decomposition
$U_q(\g)=U_q(\h)\oplus [\n^-U_q(\g)+U_q(\g)\n^+]$, which facilitates a projection $\pi\colon  U_q(\g)\to U_q(\h)$.
The Shapovalov form is a linear mapping $\U_q(\g)\tp \U_q(\g)\to U_q(\h)$, defined as  the
composition
$$
\U_q(\g)\tp \U_q(\g)\stackrel{\omega\tp \id}{\longrightarrow} \U_q(\g)\tp\U_q(\g) \longrightarrow \U_q(\g)
\stackrel{\pi}{\longrightarrow} \U_q(\h),
$$
where the middle arrow is the multiplication. The form is $\omega$-contravariant, i.e. the conjugation
operation factors through $\omega$.
The left ideal $\U_q(\g)\n^+$ lies in the kernel of the form, which therefore restricts to
the quotient $\U_q(\g)/\U_q(\g)\n^+$.

It is convenient to drop the extra structure of Chevalley involution and consider
pairings between left and right modules, with  cyclicity in place of contravariance.
Recall that a pairing $\langle .,.\rangle \colon V\tp W$ between a right module $V$ and left module $W$
is called cyclic if $\langle xu,y\rangle =\langle x, uy\rangle $ for all $x\in V$, $u\in W$, and $u\in \U_q(\g)$.
Specifically the cyclic Shapovalov form is defined similarly to contravariant but without the
first arrow. It induces a cyclic pairing between the right and left quotient modules $\n^-\U_q(\g)\backslash \U_q(\g)$ and
$\U_q(\g)/\U_q(\g)\n^+$.

The Shapovalov form on $\U_q(\g)$ is equivalent to a family of forms on Verma modules parameterized by the
highest weight $\la\in \h^*$.
Consider a one dimensional representation of the Cartan subalgebra $U_q(\h)$ determined by the
assignment
$t_i\mapsto q^{\la_i}\in \C$, where $\la_i=(\la,\al_i)$.
It extends to a representation of
$U_q(\b^\pm)$ by letting $\la(\n^\pm)=0$.  We regard
$\C$ as a left $U_q(\b^+)$-module and right $U_q(\b^-)$-module with respect to these extensions
and denote it by $\C_\la$.
Define the right and left Verma  $U_q(\g)$-modules $M_\la^\star$ and $M_\la$ to be the induced modules
$$
M_\la^\star =\C_\la\tp_{U_q(\b^-)}U_q(\g), \quad M_\la=U_q(\g)\tp_{U_q(\b^+)}\C_\la,
$$
When restricted to the Cartan subalgebra, $M_\la^\star$ is isomorphic to $\C_\la\tp U_q(\n^+)$,
while $M_\la$ is isomorphic to $U_q(\n^+)\tp \C_\la$.
Denote by $v_\la^\star\in M_\la^\star$ and $v_\la \in M_\la$ their canonical  generators. They
carry  the  highest weights.

The cyclic Shapovalov pairing $M^\star_\la\tp M_\la \to \C$ is defined by
$$
\langle v^\star_\la x, y v_\la\rangle =\la \bigl(\pi(xy)\bigr), \quad  x, y\in U.
$$
By construction, it is normalized to $\langle v^\star_\la , v_\la\rangle =   1$ and it is a unique cyclic pairing
between $M^\star_\la$ and $M_\la$ that satisfies this condition.
In order to simplify  formulas, we suppress the brackets  and write simply $v^\star_\la x\tp yv_\la\mapsto v^\star_\la xyv_\la$ thanks to the cyclicity. The subspaces  of different weights in $M_\la^\star$ and $M_\la$ are
orthogonal. The module $M_\la$ (equivalently, $M^\star _\la$) is irreducible if and only if this form is non-degenerate.

Recall that a vector in $M_\la$ is called singular if it is annihilated by $\n^+$.
Similarly,  a vector in $M_\la^\star$ is called singular if it is annihilated by $\n^-$.
Singular vectors generate submodules, where they carry the highest weights.
For a subalgebra $\g_{ij}\subset \g$ we say that a vector in $M_\la$ is $\g_{ij}$-singular or $\n^+_{ij}$-singular
if it is killed by $\n^+_{ij}$. Similarly,  we say that a vector in $M_\la^\star$ is $\g_{ij}$-singular or $\n^-_{ij}$-singular
if it is killed by $\n^-_{ij}$

It is also convenient to extend the form to a cyclic paring $M_\mu^\star\tp M_\la\to \C$ by setting it nil for
$\mu\not =\la$.
Given  a root subsystem $\Pi'\subset \Pi$, consider the corresponding semisimple  Lie subalgebra $\g'\subset \g$.
Suppose vectors $v_{\la'}\subset M_\la$ and  $v_{\mu'}^\star\subset M_\la^\star $ are $\g'$-singular and
consider the $U_q(\g')$-submodules $M'_{\la'}\subset M_\la$ and ${M^\star_{\mu'}}'\subset M_\la^\star$
generated by  $v_{\la'}$ and  $v_{\mu'}^\star$.
\begin{propn}
The restriction of the $U_q(\g)$-cyclic form
$M_{\la}^\star\tp  M_\la\to \C$ to ${M^\star_{\mu'}}'\tp M_{\la'}'$
is proportional to the $U_q(\g')$-cyclic form ${M^\star_{\mu'}}'\tp M_{\la'}'\to \C$.
\end{propn}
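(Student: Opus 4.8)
The plan is to reduce the statement to the defining property of the cyclic Shapovalov pairing, namely that on $M^\star_\mu \tp M_\la$ it is the unique cyclic pairing normalized by $\langle v^\star_\mu, v_\la\rangle = \dt_{\mu\la}$. First I would record two elementary facts. The $U_q(\g')$-submodule $M'_{\la'}\subset M_\la$ generated by a $\g'$-singular vector $v_{\la'}$ is a highest weight $U_q(\g')$-module of highest weight $\la'$, so there is a canonical surjection of $U_q(\g')$-modules from the Verma module $M^{\g'}_{\la'}$ onto $M'_{\la'}$ sending the generator to $v_{\la'}$; dually there is a surjection of right $U_q(\g')$-modules from $M^{\star,\g'}_{\mu'}$ onto ${M^\star_{\mu'}}'$. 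Pulling back the restricted $U_q(\g)$-form along these two surjections produces a bilinear pairing $B\colon M^{\star,\g'}_{\mu'}\tp M^{\g'}_{\la'}\to \C$.

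The second step is to check that $B$ is $U_q(\g')$-cyclic. This is immediate: for $u\in U_q(\g')\subset U_q(\g)$, $x\in M^{\star,\g'}_{\mu'}$, $y\in M^{\g'}_{\la'}$, the images $\bar x\in {M^\star_{\mu'}}'$ and $\bar y\in M'_{\la'}$ satisfy $\overline{xu}=\bar x u$ and $\overline{uy}=u\bar y$ because the surjections are $U_q(\g')$-equivariant, and then $B(xu,y)=\langle \bar x u,\bar y\rangle = \langle \bar x, u\bar y\rangle = B(x,uy)$ by $U_q(\g)$-cyclicity of the ambient form (cyclicity over all of $U_q(\g)$ certainly implies cyclicity over the subalgebra $U_q(\g')$). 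Weight considerations (the subspaces of distinct weights are orthogonal) show $B$ vanishes unless $\mu'=\la'$, so we may assume $\mu'=\la'$. By the uniqueness clause for cyclic pairings between $M^{\star,\g'}_{\la'}$ and $M^{\g'}_{\la'}$, $B$ equals $B(v^\star_{\la'},v_{\la'})$ times the canonical $U_q(\g')$-cyclic Shapovalov form; that is, the restriction of the $U_q(\g)$-form to ${M^\star_{\mu'}}'\tp M'_{\la'}$, read through the surjections, is the scalar $c:=\langle v^\star_{\mu'},v_{\la'}\rangle$ times the $U_q(\g')$-cyclic form on $M^{\star,\g'}_{\la'}\tp M^{\g'}_{\la'}$, which is exactly the asserted proportionality.

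The one point that needs genuine care — and which I expect to be the main obstacle — is the passage from ``pairing on the Verma modules $M^{\g'}_{\la'}$, $M^{\star,\g'}_{\mu'}$'' to ``pairing on the submodules $M'_{\la'}$, ${M^\star_{\mu'}}'$'': one must verify that $B$ actually descends, i.e. that $\bar x=0$ or $\bar y=0$ forces the relevant pairing values to vanish, so that the formula on the quotients is well defined. For $\bar y=0$ this is automatic since the ambient form is bilinear. The subtle direction is that the $U_q(\g')$-cyclic form on $M^{\g'}_{\la'}$ may itself be degenerate, with radical equal to the kernel of $M^{\g'}_{\la'}\to M'_{\la'}$; but this is not needed for the statement as phrased — the claim is only a proportionality of the two forms transported onto the submodules, and that follows from the uniqueness argument above without any nondegeneracy input. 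If one wants the cleanest formulation, I would phrase the conclusion as: the pullback of the restricted form to $M^{\star,\g'}_{\mu'}\tp M^{\g'}_{\la'}$ is $c$ times the canonical $U_q(\g')$-Shapovalov pairing, and then note that both sides factor through the submodule quotients compatibly, giving the stated result on ${M^\star_{\mu'}}'\tp M'_{\la'}$.
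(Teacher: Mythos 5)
Your proposal is correct and follows essentially the same route as the paper, whose entire proof is the two observations you make: the restriction of the $U_q(\g)$-form to ${M^\star_{\mu'}}'\tp M_{\la'}'$ is $U_q(\g')$-cyclic, and a cyclic pairing between such highest-weight modules is unique up to an overall factor. Your extra care with the pullback to the $U_q(\g')$-Verma modules and the descent to the quotients is simply a more explicit rendering of the uniqueness step the paper invokes in one line.
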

\begin{proof}
The restriction of the form to ${M^\star_{\mu'}}'\tp M_{\la'}'$ is cyclic with respect to $U_q(\g')$.
A cyclic bilinear form between right and left Verma modules is unique
up to an overall factor.
\end{proof}

\section{Diagonalization of the Shapovalov form}

Let $\Tg=\Z_+^{\frac{n(n+1)}{2}}$ designate
the set of triangular arrays $\lb=(l_{ij})_{1\leqslant i\leqslant j\leqslant n}$ with non-negative integer entries $l_{ij}$.
For every $\lb\in \mathfrak{T}$ and $k\in [1,n]$ we denote by $\lb_k\in \Z_+^{n-k+1}$
its $k$-th row  $(l_{kj})_{k\leqslant j\leqslant n}$.
Define
$$
\begin{array}{cccccc}
f(\lb_k)&=&f_{kn}^{l_{kn}}\ldots f_{kk}^{l_{kk}}\in U(\b^-),
&
f(\lb)&=&f(\lb_n)\ldots f(\lb_1),
\\[3pt]
 e(\lb_k)&=& e_{kk}^{l_{kk}}\ldots  e_{kn}^{l_{kn}}\in U(\b^+),
&
 e(\lb)&=& e(\lb_1)\ldots  e(\lb_n).
\end{array}
$$
The set $\{f(\lb), e(\lb)\}_{\lb\in \Tg}\subset U_q(\g)$ is a PBW  basis over $U_q(\h)$.
Similarly we define $\hat f(\lb)$ and $\hat e(\lb)$ using the dynamical root vectors in place
of standard.
We call $\{\hat f(\lb), \hat e(\lb)\}_{\lb\in \Tg}$ dynamical PBW system. In what follows, we study
the set of vectors
\be
\hat f(\lb)v_\la\in M_\la,
\quad
v_\la^\star \hat e(\lb)\in M_\la^\star, \quad \lb\in \Tg.
\label{cket}
\ee
We prove that, upon a normalization,  they form dual bases in generic $M_\la$ and $M_\la^\star $ with
respect to the cyclic pairing. With respect to the contravariant form on generic $M_\la$,
the system $\{\hat f(\lb)v_\la\}_{\lb\in \Tg}$ is an orthogonal basis.

Note that the ordering of the dynamical root vectors is the same lexicographic
ordering of the standard root vectors set up in Section \ref{DRV}. We call it {\em normal}.
We have to consider different row-wise orderings as well.
Let $\sib=(\si_n,\ldots,\si_1)\in S_n\times\ldots\times  S_1$ be an $n$-tuple of permutations.
Define $\hat e_{\sib}(\lb_k)=\si_k\bigl(\hat e(\lb_k)\bigr)$ to be the result of permutation $\si_k$
applied to the simple factors of $\hat e(\lb_k)$ and put $\hat e_{\sib}(\lb)=\hat e_{\si_1}(\lb_1)\ldots  \hat e_{\si_n}(\lb_n)$.
We prove in Section \ref{SecCShF} that $\hat e_{\sib}(\lb)$ is independent of $\sib$ but  we have to distinguish between different
orderings until then. We will suppress the subscript $\sib$ and understand by
$\hat e(\lb)$ a monomial with arbitrary although fixed ordering. This convention stays in effect until the end of the section.
In the subsequent sections, we use only two orderings: the normal and an alternative, for which we fix a special notation.

The basis of positive (negative) root vectors allows us to identify the factorspaces $\n^\pm_{ij}/\n^\pm_{kj}$ with
the linear complements $\n^\pm_{ij}\ominus \n^\pm_{kj}\subset \n^\pm_{ij}$,
for all $i,j,k\in [1,n]$ such that $i\leqslant k \leqslant j$. By $U_q(\n^\pm_{ij}/\n^\pm_{kj})$
we denote the subalgebras in $U_q(\g)$ generated by $\n^\pm_{ij}/\n^\pm_{kj}$.

Similarly we define $U_q(\h)$-submodules $\hat \n^+_{ij}= \Span \{\hat e_{lk}\}_{i\leqslant l \leqslant k \leqslant j}$,
$\hat \n^-_{ij}= \Span \{\hat f_{lk}\}_{i\leqslant l \leqslant k \leqslant j}$ and
 $\hat \n^\pm_{ij}/\hat\n^\pm_{kj}=\hat \n^\pm_{ij}\ominus \hat\n^\pm_{kj}\subset \hat \n^\pm_{ij}$.
By $U_q(\hat \n^\pm_{ij})\subset U_q(\b^\pm_{ij})$ we denote the subalgebras generated by $\hat \n^\pm_{ij}$
and by $U_q(\hat \n^\pm_{ij}/\hat \n^-_{kj})$ the subalgebras generated
by  $\hat \n^\pm_{ij}/\hat \n^+_{kj}$.
Clearly $U_q(\hat \n^-_{ij})v_\mu \subset U_q(\n^-_{ij})v_\mu $ and
$v_\mu^\star U_q(\hat \n^+_{ij}) \subset v_\mu^\star U_q(\n^+_{ij})$ for all weight vectors $v_\mu$, $v_\mu^\star$.
The monomial structure of $f(\lb)v_\la$  is compatible with the factorization
$$
U_q(\hat \n_{1\sms n}^-)v_{\la}=
U_q(\hat \n_{kn}^-)U_q(\hat \n^-_{1\sms n}/\hat \n_{kn}^-)v_{\la}
=U_q(\hat \n_{nn}^-)U_q(\hat \n_{n-1\sms n}^-/\hat \n_{nn}^-)\ldots U_q(\hat \n_{1\sms n}^-/\hat \n_{2\sms n}^-)v_{\la}.
$$
Similarly, the vector  $v_\la^\star e(\lb)$ is factorized in accordance with
$$
v^\star_\la U_q(\hat \n^+_{1\sms n})=
v^\star_\la U_q(\hat \n^+_{1\sms n}/\hat \n_{kn}^+)U_q(\hat \n_{kn}^+)
=v^\star_\la U_q(\hat \n_{1\sms n}^+/\hat \n_{2\sms n}^+)\ldots U_q(\hat \n_{n-1\sms n}^+/\hat \n_{nn}^+) U_q(\hat \n_{nn}^+).
$$
We shall see in Section \ref{SecCShF} that the algebras $U_q(\hat \n_{i n}^\pm/\hat \n_{i+1\sms n}^\pm)$ are commutative.
\begin{lemma}
\label{sing+-}
Suppose that  $1\leqslant k\leqslant n$. Then  all vectors from
$U_q(\hat \n^-_{1\sms n}/\hat \n_{kn}^-)v_{\la}$ and $v_{\la}^\star U_q(\hat \n^+_{1\sms n}/\hat \n_{kn}^+)$ are $\g_{kn}$-singular.
\end{lemma}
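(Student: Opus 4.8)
The plan is to reduce the statement to a claim about the dynamical root vectors $\hat f_{ij}$ and $\hat e_{ij}$ individually, and then to invoke Proposition \ref{key}. By the Chevalley involution $\omega$, which interchanges $M_\la$ and $M_\la^\star$ as well as $\hat f_{ij}$ with $\hat e_{ij}$ and $\n^+_{kn}$ with $\n^-_{kn}$, it suffices to treat only one of the two assertions, say that every vector in $U_q(\hat\n^-_{1\sms n}/\hat\n^-_{kn})v_\la$ is $\g_{kn}$-singular, i.e.\ is annihilated by $e_l$ for all $l\in[k,n]$.

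First I would record that $U_q(\hat\n^-_{1\sms n}/\hat\n^-_{kn})$ is generated by the $\hat f_{ij}$ with $i\in[1,k-1]$ and $i\le j\le n$; in particular every such generator has first index $i<k\le l$ for any $l\in[k,n]$. Proposition \ref{key} then gives two facts for these generators: $\hat f_{ij}\in U^{e_l}$ whenever $l>i$ (so $e_l\hat f_{ij}\in U e_l$), and moreover $[e_l,\hat f_{ij}]\equiv \hat f_{i+1\sms j}[h_{ij}]_q \bmod Ue_l$ when $l=i$, which does not occur here since $l\ge k>i$; hence in fact $[e_l,\hat f_{ij}]\in Ue_l$ for all the relevant generators. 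Next I would show, by induction on the length of a PBW monomial $u\in U_q(\hat\n^-_{1\sms n}/\hat\n^-_{kn})$, that $[e_l,u]\in Ue_l$ for $l\in[k,n]$: writing $u=\hat f_{ij}u'$ with $u'$ shorter, expand $e_l u = \hat f_{ij}e_l u' + [e_l,\hat f_{ij}]u'$; the first term lies in $\hat f_{ij}Ue_l\subset Ue_l$ by the induction hypothesis applied to $u'$, and the second lies in $Ue_lu'$, but here one must push $e_l$ past $u'$, which again stays in $Ue_l$ by induction. Finally, applying this to $v_\la$: $e_l\,u\,v_\la=(u\,e_l+[e_l,u])v_\la$, the first term vanishes because $e_l v_\la=0$ (as $v_\la$ is a highest-weight vector), and the second lies in $Ue_l v_\la=0$ as well. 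Hence $u v_\la$ is $\n^+_{kn}$-singular, as required.

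The main obstacle is the bookkeeping in the commutator induction: one needs the normalizer property $\hat f_{ij}\in U^{e_l}$ in its sharp form, namely that $[e_l,\hat f_{ij}]$ lands in the \emph{left} ideal $Ue_l$ (not merely modulo some correction), and that this ideal is stable under left multiplication by the remaining PBW factors so that the induction closes. This is exactly what the first line of Proposition \ref{key} supplies once we observe that the excluded ``diagonal'' case $l=i$ never arises for generators of $U_q(\hat\n^-_{1\sms n}/\hat\n^-_{kn})$ when $l\ge k$. Everything else is the straightforward observation that $\n^+_{kn}v_\la=0$ and that $Ue_l$ annihilates $v_\la$; I would keep the argument at the level of ``the generators normalize $Ue_l$, hence so does the subalgebra they generate, hence so does its action on the highest-weight line.''
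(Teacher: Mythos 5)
Your proposal is correct and takes essentially the same route as the paper, whose entire proof is the statement that the lemma is an immediate consequence of Proposition \ref{key}: the generators $\hat f_{ij}$ (resp.\ $\hat e_{ij}$) with $i<k$ lie in $U^{e_l}$ (resp.\ ${}^{f_l}U$) for every $l\in[k,n]$, these normalizers are subalgebras, and $Ue_l$ kills $v_\la$ (resp.\ $f_lU$ kills $v_\la^\star$), which is exactly your induction packaged differently. The only cosmetic point is that for the simple generators $\hat f_{ii}=f_i$ with $i<k$ the needed commutation $[e_l,f_i]=0$, $l\neq i$, comes directly from the defining relations rather than from Proposition \ref{key}, whose hypothesis requires $i<j$.
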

\begin{proof}
An immediate consequence of Proposition \ref{key}.
\end{proof}

Fix $\lb\in \Tg$ and define a sequence of weights $(\la_{\lb,i})_{i=0}^n\subset \h^*$ by
\be
\la_{\lb,0}=\la,\quad \la_{\lb,i}=\la_{\lb,i-1}
-\sum_{i\leqslant j\leqslant k\leqslant n} l_{ik}\al_{j}, \quad i=1,\ldots,n.
\label{partial_weight}
\ee
These are the weights of the vectors $\hat f(\lb_{i})\ldots \hat f(\lb_{1})v_\la$.
Note that the difference $\la_{\lb,i}-\la_{\lb,i-1}$ depends only on $\lb_i$ and not on $\la$.
Define vectors
\be
v^\star_{\la_{\kb,0}}=v^\star_{\la},\quad v_{\la_{\lb,0}}=v_\la
,
\quad
v^\star_{\la_{\kb,i}}=v_\la^\star\hat e(\kb_n)\ldots\hat e(\kb_i)\in
M_\la^\star,
\quad
 v_{\la_{\lb,i}}=\hat f(\lb_{i})\ldots \hat f(\lb_{1})v_\la
M_\la,
\label{partial_sing}
\ee
 $i\in[1,n]$, of weights $\la_{\lb,i}$ (mind the right action of $U_q(\h)$ on $M_\la^\star$).
\begin{propn}
\label{diagonal}
For all $\kb,\lb \in \Tg$, the matrix coefficient
$
v_\la^\star \hat e(\kb)
\hat f(\lb)v_\la
$
is nil unless $\kb=\lb$.
\end{propn}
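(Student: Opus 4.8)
The plan is to prove the vanishing by a downward induction on the rows, using the singularity properties established in Proposition~\ref{key} and Lemma~\ref{sing+-}. The key observation is that the matrix coefficient factorizes through the nested subalgebras $U_q(\hat\n^\pm_{in}/\hat\n^\pm_{i+1\,n})$, and at each stage we can play off the $\g_{kn}$-singularity of one side against the highest-weight behaviour of the other. Concretely, write $v_\la^\star\hat e(\kb)\hat f(\lb)v_\la=v^\star_{\la_{\kb,1}}\hat e(\kb_1)\hat f(\lb_1)v_{\la_{\lb,1}}$ where, by \eqref{partial_sing}, $v^\star_{\la_{\kb,1}}=v_\la^\star\hat e(\kb_n)\cdots\hat e(\kb_2)$ is $\g_{2n}$-singular and $v_{\la_{\lb,1}}=\hat f(\lb_2)\cdots\hat f(\lb_1)v_\la$ is also $\g_{2n}$-singular by Lemma~\ref{sing+-}. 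The first step is therefore to isolate the contribution of the first row.

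First I would handle the first row. Suppose $\kb_1\ne\lb_1$. The monomial $\hat f(\lb_1)=\hat f_{1n}^{l_{1n}}\cdots\hat f_{11}^{l_{11}}$ involves only the dynamical root vectors $\hat f_{1k}$, each of which lies in $U^{e_k}$ for $k\in[2,n]$ by Proposition~\ref{key}; moreover $[e_1,\hat f_{1j}]=\hat f_{2\,j}[h_{1j}]_q \bmod Ue_1$. Pushing the $e_k$-part of $\hat e(\kb_1)=\hat e_{11}^{l_{11}}\cdots\hat e_{1n}^{l_{1n}}$ through $\hat f(\lb_1)$ and noting that everything to the left of $\hat e(\kb_1)$ annihilates $\n^+$-lowered vectors, one gets a recursion that lowers the total degree $|\lb_1|$ of the first row while keeping the structure intact. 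The cleanest way to organize this is to observe that $\hat f(\lb_1)v_{\la_{\lb,1}}$ has weight $\la_{\lb,1}$ on the module side and $v_\la^\star\hat e(\kb)$ has weight $\la-\sum_k|\kb_k|\al_{(k)}$-type contributions on the dual side; since weight subspaces of different weights are orthogonal, the coefficient can only be nonzero if $\sum_j l_{1j}=\sum_j k_{1j}$ and the partial sums match, which already forces $\lb_1$ and $\kb_1$ to have equal total degree. Then within fixed degree I would induct: move one generator $e_1$ (or the lowest-index factor of $\hat e(\kb_1)$) across $\hat f(\lb_1)$ using Proposition~\ref{key}, producing either a term with a strictly smaller first-row degree on both sides (handled by induction) or a term with a leftover $Ue_1$ that kills $v_{\la_{\lb,1}}$ because that vector is $\g_{2n}$-singular but \emph{not} $\g_{1n}$-singular in general — here one needs that $e_1$ does annihilate it only after the appropriate number of $\hat f_1$'s have been removed. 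Iterating strips the first row entirely and forces $\kb_1=\lb_1$, after which we are reduced to the matrix coefficient $v^\star_{\la_{\kb,1}}\hat e(\kb')\hat f(\lb')v_{\la_{\lb,1}}$ for the truncated arrays over $\g_{2n}$.

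The second step is then pure induction on $n$: by Proposition (the cyclic-form compatibility proposition preceding Lemma~\ref{sing+-}), the restriction of the $U_q(\g)$-cyclic form to the $U_q(\g_{2n})$-submodules generated by $v_{\la_{\kb,1}}^\star$ and $v_{\la_{\lb,1}}$ is proportional to the $U_q(\g_{2n})$-cyclic Shapovalov form, and the dynamical root vectors $\hat f_{ij}$, $\hat e_{ij}$ with $i\geqslant 2$ are exactly the dynamical root vectors for $\g_{2n}$. So the induction hypothesis applies verbatim and gives $\kb'=\lb'$, i.e. $\kb=\lb$. The base case $n=1$ (equivalently, the last row) is the rank-one statement $v_\la^\star e_1^a f_1^b v_\la=0$ for $a\ne b$, which is immediate from weight considerations.

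The main obstacle I anticipate is making the first-row reduction precise: one must track how the generalized commutators $[e_k,\hat f_{1j}]$ from Proposition~\ref{key} interact when there are \emph{several} factors $\hat f_{1j}^{l_{1j}}$ with different $j$, and verify that the ``error'' terms landing in $U e_k$ genuinely annihilate the relevant partial vectors — this requires knowing that $\hat f(\lb_2)\cdots\hat f(\lb_1)v_\la$ is $\n^+_{2n}$-singular (Lemma~\ref{sing+-}) so that the only surviving $e_k$ for $k\geqslant 2$ act as zero, while the $e_1$-terms are controlled by the explicit commutator $[e_1,\hat f_{1j}]\equiv\hat f_{2j}[h_{1j}]_q$. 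Once one commits to processing the factors of $\hat e(\kb_1)$ from left to right (lowest index first) and the factors of $\hat f(\lb_1)$ correspondingly, the bookkeeping closes, but writing it out cleanly is where the real work lies; everything else is formal.
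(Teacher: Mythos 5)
Your overall strategy---peeling off rows along the chain $\g\supset\g_{2\,n}\supset\cdots$, the singularity statements of Lemma \ref{sing+-}, and the proposition on restriction of the cyclic form to submodules generated by singular vectors---is the same skeleton as the paper's argument, but the step that carries the actual content is set up incorrectly and left open. The decomposition is backwards: since $\hat e(\kb)=\hat e(\kb_1)\cdots\hat e(\kb_n)$ and $\hat f(\lb)=\hat f(\lb_n)\cdots\hat f(\lb_1)$, the first-row factors are the \emph{outermost} ones, adjacent to $v_\la^\star$ and $v_\la$; the vectors that Lemma \ref{sing+-} makes $\g_{2\,n}$-singular are $v_\la^\star\hat e(\kb_1)$ and $\hat f(\lb_1)v_\la$, not $v_\la^\star\hat e(\kb_n)\cdots\hat e(\kb_2)$ and $\hat f(\lb_2)\cdots\hat f(\lb_1)v_\la$, which are generic vectors of the $U_q(\g_{2\,n})$-Verma submodules rather than their highest vectors. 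Moreover, your identity $v_\la^\star\hat e(\kb)\hat f(\lb)v_\la=v^\star_{\la_{\kb,1}}\hat e(\kb_1)\hat f(\lb_1)v_{\la_{\lb,1}}$ silently permutes factors that are not known to commute at this stage (commutativity of dynamical root vectors is established only later, in Proposition \ref{row-commute}, and only within a row).

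Second, your first-row reduction has a genuine hole. Weight orthogonality in $M_\la^\star\tp M_\la$ only forces equality of the \emph{total} weights $\sum_{i\leqslant k}k_{ik}\al_{ik}=\sum_{i\leqslant k}l_{ik}\al_{ik}$, which does not constrain the first row by itself (already for $\s\l(3)$ one has $\al_1+\al_2=\al_{12}$, so different rows can compensate); hence ``the partial sums match'' is not a consequence of orthogonality of weight subspaces. Your fallback---moving $e_1$ across $\hat f(\lb_1)$ and discarding the $Ue_1$-terms---requires $e_1$ to annihilate a vector that, as you note yourself, is not $e_1$-singular; this is exactly the missing argument, not bookkeeping. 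The clean route (the paper's) needs no commutation at all: $v_\la^\star\hat e(\kb_1)\cdots\hat e(\kb_i)$ and $\hat f(\lb_i)\cdots\hat f(\lb_1)v_\la$ are $\g_{i+1\,n}$-singular of weights $\la_{\kb,i}$ and $\la_{\lb,i}$, and the remaining factors lie in $U_q(\g_{i+1\,n})$, so the coefficient is a matrix coefficient of the restriction of the cyclic form to the two $U_q(\g_{i+1\,n})$-Verma submodules; by the restriction proposition and cyclicity over the full Cartan this vanishes unless $\la_{\kb,i}=\la_{\lb,i}$. Induction on $i$, together with the triangular system $\sum_{s=j}^{n}(k_{is}-l_{is})=0$, $j=i,\dots,n$ (equivalently, linear independence of $\al_{ii},\dots,\al_{in}$), then forces $\kb=\lb$.
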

\begin{proof}
Due to Lemma \ref{sing+-}, for each $i$  the vector
$
v_{\la_{\lb,i}}\in U_q(\hat \n^-_{1\sms n}/\hat \n_{i+1\sms n}^-)v_{\la}
$
is $\g_{i+1\sms n}$-singular. Let  $M_{\la_{\lb,i}}=U_q( \n_{i+1\sms n}^-)v_{\la_{\lb,i}}$ denote the $U_q(\g_{i+1\sms n})$-Verma submodule in $M_\la$ generated by
$v_{\la_{\lb,i}}$.
In the similar way we define the $U_q(\g_{i+1\sms n})$-Verma submodule $M_{\la_{\kb,i}}^\star=v^\star_{\la_{\kb,i}} U_q( \n_{i+1\sms n}^+)$ in $M_\la^\star$ generated by a $\g_{i+1\sms n}$-singular vector
$
v^\star_{\la_{\kb,i}}\in v^\star_{\la} U_q(\hat \n^+_{1\sms n}/\hat \n_{i+1\sms n}^+)
$.

By construction, $\la_{\kb,0}=\la_{\lb,0}=\la$. Suppose that we have proved the equality
$\la_{\kb,i-1}=\la_{\lb,i-1}$  for some $i\in [1,n)$.  Then
$v_\la^\star \hat e(\kb)\hat f(\lb)v_\la$ can be presented as the matrix coefficient
$v_{\la_{\lb,i-1}}^\star \hat e(\kb_{i})\ldots \hat e(\kb_{n})\hat f(\lb_n)\ldots \hat f(\lb_i)v_{\la_{\lb,i-1}}$
of a cyclic paring between the $U_q(\g_{i+1\sms n})$-modules  $M_{\la_{\kb,i}}^\star $ and $M_{\la_{\lb,i}}$.
It is zero unless $\la_{\kb,i}=\la_{\lb,i}$. This is true for all $i\in [0,n]$, by induction on $i$.

The equalities $\la_{\kb,i}-\la_{\kb,i-1}=\la_{\lb,i}-\la_{\lb,i-1}$ for $i\in [1, n]$
translate to a triangular system of equations on the differences $k_{is}-l_{is}$:
namely, $\sum_{s=j}^n (k_{is}-l_{is})=0$ for all  $j=i,\ldots, n$.
It is immediate that $\kb_{i}=\lb_{i}$ for all $i\in [1,n]$ and therefore $\kb=\lb$.
\end{proof}
If follows that $U_q(\hat \n_{k\sms n}^-)v_{\la}$ is orthogonal to
$v_{\la}^\star U_q(\hat \n^+_{i\sms n}/\hat \n_{kn}^+)$ and
$U_q(\hat \n^-_{i\sms n}/\hat \n_{kn}^-)v_{\la}$ is orthogonal to
$v_{\la}^\star U_q(\hat \n_{k\sms n}^+)$ for all $i,k\in[1,n]$, $i< k$.
Calculation of (\ref{diagonal}) boils down to calculation
of the matrix coefficients
$$
v^\star_{\mu} \hat e(\lb_{k}) \hat f(\lb_{k}) v_{\mu} , \quad 1\leqslant k\leqslant n,
$$
where $v_\mu\in M_\la$ and $v^\star_{\mu}\in M^\star_{\la}$ are $\g_{kn}$-singular vectors.
This is done in the following section.
\section{The matrix coefficients}
Given a weight $\mu\in \h^*$, we put $\mu_i=(\mu,\al_i)$ and $\mu_{ij}=\mu_i+\ldots+\mu_j+j-i$,
assuming  $i\leqslant j\leqslant n$.
We adopt the convention that products  $\prod_{i=a}^b$  are
not implemented (formally set to $1$) once $a>b$.
For every $\lb\in \Tg$ and every $k\in [1,n]$ we define
$$
A_{\lb,k}(\mu)=\prod_{k+1\leqslant s\leqslant r\leqslant n}\>\prod_{i=0}^{l_{r}-1}[\mu_{sr}-i+l_{s-1}+1]_q,
\quad \lb_k=(l_n,\ldots,l_k).
$$
According to this definition, $A_{\lb,k}(\mu)$ actually depends on the $k$-th row  $\lb_k\in \Z_+^{n-k+1}$ of $\lb$.

\begin{lemma}
\label{unhat}
The matrix coefficient $v^\star_\la \hat e(\lb_{1}) \hat f(\lb_{1}) v_\la$ is equal to
$A_{\lb,1}(\la) v^\star_\la \hat e(\lb_{1}) f(\lb_{1}) v_\la
$.

\end{lemma}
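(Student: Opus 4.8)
The plan is to reduce the computation of $v^\star_\la \hat e(\lb_1)\hat f(\lb_1)v_\la$ to the same coefficient with $\hat f(\lb_1)$ replaced by the \emph{un-hatted} monomial $f(\lb_1)$, by peeling off the Cartan coefficients that distinguish $\hat f_{1j}$ from $f_{1j}$ one factor at a time. Recall from Section~\ref{DRV} the recursion $\hat f_{1j}=f_1\hat f_{2\sms j}[h_{2\sms j}+1]_q-\hat f_{2\sms j}f_1[h_{2\sms j}]_q$, and the product formula $\hat f_{1j}p_{2\sms j}=p_{2\sms j}f_{1j}\prod_{l=2}^{j}[h_{l\sms j}+1]_q$. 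The monomial $\hat f(\lb_1)=\hat f_{1n}^{l_{1n}}\cdots\hat f_{11}^{l_{11}}$ is built entirely from the first-row vectors $\hat f_{1j}$, $j\in[1,n]$, each of which lies in the normalizer $U^{e_k}$ for $k\geqslant 2$ (Proposition~\ref{key}); hence $v^\star_\la\hat e(\lb_1)$, which is $\g_{2\sms n}$-singular by Lemma~\ref{sing+-}, can absorb the extremal projector $p_{2\sms n}$ and we may work modulo $U\n^+_{2\sms n}$ throughout.

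First I would establish a one-step lemma: acting with a single $\hat f_{1j}$ on a weight-$\mu$ vector is, modulo the irrelevant $\n^-_{2\sms n}U$-terms that die against the $\g_{2\sms n}$-singular bra, the same as acting with $f_{1j}$ times the scalar $[\mu_{j\sms n}+1]_q$ — or more precisely the Cartan factor $\prod$ over the appropriate sub-interval, evaluated at the running weight. The running weight matters: as we move $\hat f_{1j}$ past the already-placed negative root vectors to its right, $h_{2\sms j}$ is specialized not at $\la$ but at $\la$ shifted down by the weights of those factors. This is exactly where the index $l_{s-1}$ (the accumulated first-row exponents sitting to the right) enters the product defining $A_{\lb,1}(\la)$, and where the shift $-i$, $i=0,\ldots,l_r-1$, records peeling off the $r$-th power one unit at a time. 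So the second step is a bookkeeping induction on the total degree $|\lb_1|=\sum_j l_{1j}$: strip the rightmost $\hat f_{11}=f_{11}$ (no Cartan factor), then $\hat f_{12}$, and so on, each time recording the scalar and updating the weight, until $\hat f(\lb_1)v_\la$ has been rewritten as $A_{\lb,1}(\la)\,f(\lb_1)v_\la$ modulo $\n^-_{2\sms n}U\,v_\la$. Pairing against $v^\star_\la\hat e(\lb_1)$ kills the correction terms by the orthogonality already recorded after Proposition~\ref{diagonal}, giving the claim.

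The main obstacle is controlling the commutator corrections in the one-step lemma: when $\hat f_{1j}$ is pushed to the right past a block $\hat f_{kr}^{l}$ with $k\geqslant 2$, one must verify that everything produced other than the clean $f_{1j}\cdot(\text{scalar})$ term lands in $\n^-_{2\sms n}U$. For this I would lean on Lemma~\ref{basics comms} (the $\hat f_{1j}$ commute with $f_k$ for $k\in(1,j)$, and the genuine commutators $[f_1,f_{1\sms r}]_q$ etc.\ are again first-column vectors) together with the "row-wise" structure, so that the only surviving contribution is the diagonal one. A secondary subtlety is that the Cartan elements do not commute with the $f$'s, so the order in which scalars are read off is dictated by the chosen (fixed) ordering of the monomial $\hat f(\lb_1)$; one has to check the product $\prod_{s\leqslant r}\prod_{i=0}^{l_r-1}[\mu_{sr}-i+l_{s-1}+1]_q$ is independent of that ordering, which follows because all these Cartan factors, once pulled to the left of $v^\star_\la\hat e(\lb_1)$, are scalars evaluated at commuting weight data. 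Granting these, the induction closes and Lemma~\ref{unhat} follows.
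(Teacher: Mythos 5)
Your overall strategy --- replace each hatted factor by $f_{1\sms j}$ times a Cartan scalar evaluated at the running weight, and kill the corrections against the $\g_{2\sms n}$-singular bra $v^\star_\la\hat e(\lb_1)$ --- is the same as the paper's, but the order in which you peel creates a genuine gap. You strip factors from the right. The correction terms (elements of $\n^-_{2\sms n}U\,v_\la$) produced at an early step then sit to the \emph{right} of the still-hatted factors $\hat f_{1\sms m}$, $m\geqslant j$. To conclude that they still vanish in the matrix coefficient you must move their leading $\n^-_{2\sms n}$-factor leftwards past those $\hat f_{1\sms m}$, i.e.\ you need $\hat f_{1\sms m}\,\n^-_{2\sms n}\subset \n^-_{2\sms n}U$ (the dynamical vectors normalize the right ideal $\n^-_{2\sms n}U$), equivalently that right multiplication by $\hat f_{1\sms m}$ preserves vectors of $M^\star_\la$ annihilated by $\n^-_{2\sms n}$. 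Nothing you cite gives this: Proposition \ref{key} is the left-module statement $\hat f_{ij}\in U^{e_k}$ (its $\omega$-image concerns $\hat e_{ij}$, not $\hat f_{ij}$), Lemma \ref{basics comms} deals with the standard root vectors, and the ``row-wise'' commutativity (Proposition \ref{row-commute}) is proved later \emph{from} Lemma \ref{unhat} via Theorem \ref{main}, so invoking it here would be circular. You flag this obstacle yourself, but the tools you name do not close it; the same wall appears if you insert the extremal projector, since the tail $p_{2\sms j}-1$ again has to be pushed left through hatted factors.

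The paper sidesteps the problem by processing the factors from left to right: when $\hat f_{1\sms k}$ is expanded, everything to its left has already been converted into plain Chevalley monomials $f_1\cdots f_m$ with $m\geqslant k$ (this uses the decreasing ordering in $\hat f(\lb_1)$); the non-identity permutation monomials of $\hat f_{1\sms k}$ lie in $\n^-_{2\sms k}U$ and $f_1\cdots f_m\in{}^{\n^-_{2\sms m}}U$, both by Lemma \ref{nilp}, so the junk slides through the processed part, lands next to $v^\star_\la\hat e(\lb_1)$ and dies --- no hatted factor ever stands to the left of a correction term. (The paper then makes a second, right-to-left pass to convert $f_1\cdots f_m$ into $f_{1\sms m}$; merging the two passes, as you do, is harmless.) Two minor slips: in $\hat f(\lb_1)$ every factor is first-row, so there are no blocks $\hat f_{kr}^{l}$ with $k\geqslant 2$ to push past; and the ideal you may discard against the bra is $\n^-_{2\sms n}U$, not $U\n^+_{2\sms n}$ (the latter dies against $v_\la$). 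To repair your argument, either reverse the peeling order as in the paper, or prove separately that $\hat f_{1\sms m}\in{}^{\n^-_{2\sms n}}U$.
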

\begin{proof}
The element $\hat f(\lb_{1})$ is a monomial in the dynamical root vectors $\hat f_{1\sms m}$, where $m$ ranges from $1$ to $n$.
The  element $\hat f_{1\sms m}$ is a sum of monomials in  $f_1,\ldots, f_m$ with coefficients from the Cartan subalgebra.
Let us prove that only $f_1\ldots f_m$ survives in each copy of $\hat f_{1\sms m}$. The other monomials, which are obtained by a permutation of the simple root vectors
$f_i$, vanish in the matrix coefficient. Suppose we have replaced all $\hat f_{1\sms m}$ with $f_1\ldots f_m\prod_{i=2}^m[h_{im}+1]_q$
 on the left of some factor $\hat f_{1\sms k}$ and denote the result by $\psi$, i.e.,
$f(\lb_{1}) v_\la=\psi\hat f_{1\sms k}\ldots v_\la$.
The element $\psi$ is a product of the monomials $f_1\ldots f_m$ with $m\geqslant k$, and
$f_1\ldots f_m\in \>{}\!\!^{\n^-_{2\sms m}}U$ by Lemma \ref{nilp}. This implies
 $\psi \n^-_{2\sms k}\subset \n^-_{2\sms n} U$.
Every monomial $\phi=f_{\si(1)}\ldots f_{\si(k)}$ entering $\hat f_{1\sms k}$ with $\si\not =\id$
belongs to $\n^-_{2\sms k}U$ by Lemma \ref{e1kf1m}.
Therefore, the vector $v^\star_\la \hat e_{\lb_1}\psi \phi \in v^\star_\la \hat e_{\lb_1}\n^-_{2\sms n}U$ is nil.

By this reasoning, we can consecutively  replace each $\hat f_{1\sms m}$ with $f_1\ldots  f_{m} \prod_{i=2}^m[h_{im}+1]_q$
factor by factor from left to right.
The Cartan coefficients  produce scalar multipliers, which gather to the overall factor $A_{\lb,1}(\la)$.
Finally, we replace each $f_1\ldots f_m$ with $f_{1\sms m}$ by a similar reasoning moving in the opposite direction, from  right to left.
\end{proof}

Next we calculate the matrix coefficient $v^\star_\la \hat e_{1\sms n}^{l} f_{1\sms n}^l v_\la$.
For all $k,m\in [1,n]$ such that $k\leqslant m$ we define polynomial functions $C_{km}\colon \h^*\to \C$ by
\be
\la\mapsto C_{km}(\la)=\prod_{i=k}^m[\la_{im}]_q.
\label{C-coefficient}
\ee
\begin{lemma}
The matrix coefficient $v^\star_\la \hat e_{1\sms n} f_{1\sms n}v_\la$
is equal to
$
C_{1\sms n}(\la).
$
\end{lemma}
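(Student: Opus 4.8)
The plan is to prove, by induction on $n-k$, the stronger statement that
\[
\hat e_{k\sms n}f_{k\sms n}\,v=\Bigl(\prod_{i=k}^{n}[\mu_{in}]_q\Bigr)v
\]
for every $\g_{k\sms n}$-singular weight vector $v\in M_\la$ of $\g$-weight $\mu$. The Lemma is then the case $k=1$, $v=v_\la$: the vector $\hat e_{1\sms n}f_{1\sms n}v_\la$ has weight $\la$, hence lies in $\C v_\la$, and since $\langle v^\star_\la,v_\la\rangle=1$ the matrix coefficient $v^\star_\la\hat e_{1\sms n}f_{1\sms n}v_\la$ is exactly the scalar by which $\hat e_{1\sms n}f_{1\sms n}$ multiplies $v_\la$. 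The base case $n-k=0$ is immediate: as $v$ is $\g_{nn}$-singular, $e_nv=0$, so $\hat e_{nn}f_{nn}v=e_nf_nv=[h_n]_q v=[\mu_{nn}]_q v$.

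For the inductive step I would substitute the recursions $f_{k\sms n}=f_kf_{k+1\sms n}-qf_{k+1\sms n}f_k$ and $\hat e_{k\sms n}=[h_{k+1\sms n}+1]_q\hat e_{k+1\sms n}e_k-[h_{k+1\sms n}]_qe_k\hat e_{k+1\sms n}$ into $\hat e_{k\sms n}f_{k\sms n}v$ and simplify the four resulting terms using three facts. First, Lemma~\ref{basics comms} gives $e_kf_{k\sms n}v=[e_k,f_{k\sms n}]v=q^{-\mu_k}f_{k+1\sms n}v$, while $e_kf_kv=[h_k]_qv=[\mu_k]_q v$ because $e_kv=0$. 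Second, $\hat e_{k+1\sms n}$ commutes with $f_k$: only $e_{k+1},\dots,e_n$ and the Cartan elements $t_{k+2},\dots,t_n$ enter its recursive expression, and each of these commutes with $f_k$. Third, both $v$ and $f_kv$ are $\g_{k+1\sms n}$-singular, and the $\g_{k+1\sms n}$-weight of $f_kv$ is $\mu-\al_k$, so its numbers satisfy $(\mu-\al_k)_{k+1\sms n}=\mu_{k+1\sms n}+1$ and $(\mu-\al_k)_{in}=\mu_{in}$ for $i\geqslant k+2$. Applying the induction hypothesis to $\hat e_{k+1\sms n}f_{k+1\sms n}$ acting on $v$ and on $f_kv$, and noting that the leftmost factors $[h_{k+1\sms n}\pm1]_q$ act on the resulting weight-$\mu$ vector as the scalars $[\mu_{k+1\sms n}\pm1]_q$, every term becomes an explicit scalar multiple of $v$.

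Gathering the contributions, the cross term coming from $f_kf_{k+1\sms n}-qf_{k+1\sms n}f_k$ is simplified with the $q$-number identity $[a]_q-q[a+1]_q=-q^{a+1}$, and the coefficient of $v$ reduces to
\[
\Bigl(\prod_{i=k+2}^{n}[\mu_{in}]_q\Bigr)\,[\mu_{k+1\sms n}]_q\,\bigl(q^{-\mu_k}[\mu_{k+1\sms n}+1]_q+q^{\mu_{k+1\sms n}+1}[\mu_k]_q\bigr).
\]
The last bracket equals $[\mu_k+\mu_{k+1\sms n}+1]_q=[\mu_{kn}]_q$ by the identity $q^{-a}[b+1]_q+q^{b+1}[a]_q=[a+b+1]_q$, so the coefficient becomes $\prod_{i=k}^{n}[\mu_{in}]_q$, closing the induction. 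I expect the one subtlety worth flagging to be organizational rather than computational: the induction must carry the vector-valued statement (so that the hypothesis can be fed the shifted vector $f_kv$), and one must respect that $[h_{k+1\sms n}\pm1]_q$ does not commute with $e_k$, which is harmless only because it is applied last, to a weight vector.
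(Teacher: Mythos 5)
Your argument is correct, and it reaches the formula by a genuinely different route than the paper. The paper also inducts on the length of the root, but it works with the cyclic pairing: it writes $f_{1\sms n}=f_1f_{2\sms n}-qf_{2\sms n}f_1$, discards the cross term $f_{2\sms n}f_1$ outright because the covector $v_\la^\star\hat e_{1\sms n}$ is $\n^-_{2\sms n}$-singular (Proposition \ref{key} via Lemma \ref{sing+-}), and then expands $\hat e_{1\sms n}$ and pushes $f_1$ all the way left, where it annihilates $v_\la^\star$; the single Cartan identity $[h_{2\sms n}+1]_q[h_1+1]_q-[h_{2\sms n}]_q[h_1]_q=[h_{1\sms n}]_q$ produces the factor $[\la_{1\sms n}]_q$ and the induction closes. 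You instead stay entirely inside the left module $M_\la$ and prove the stronger, vector-valued statement that $\hat e_{k\sms n}f_{k\sms n}$ acts by the scalar $\prod_{i=k}^n[\mu_{i\sms n}]_q$ on every $\g_{k\sms n}$-singular vector of weight $\mu$; since you do not invoke the singularity of the positive side, the cross term does not vanish for you term by term, and you compensate by feeding the induction hypothesis also the shifted singular vector $f_kv$ (with the weight shift $(\mu-\al_k)_{k+1\sms n}=\mu_{k+1\sms n}+1$) and by the identities $[a]_q-q[a+1]_q=-q^{a+1}$ and $q^{-a}[b+1]_q+q^{b+1}[a]_q=[a+b+1]_q$; I checked that your intermediate coefficient and the final bracket do combine to $[\mu_{k\sms n}]_q\prod_{i=k+1}^n[\mu_{i\sms n}]_q$ as claimed. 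What each approach buys: the paper's proof is shorter once Proposition \ref{key} and the right Verma module are in hand, while yours is more self-contained (it never uses the $\n^-$-singularity of $v_\la^\star\hat e_{1\sms n}$) and it delivers directly the version for arbitrary $\g_{k\sms n}$-singular vectors that the paper only obtains afterwards by the uniqueness-of-cyclic-form argument (the "obvious modifications" feeding Corollary \ref{row_mat_coef}), at the cost of one extra application of the induction hypothesis and slightly heavier $q$-arithmetic. Your cautionary remarks — carrying the vector-valued statement through the induction and applying the leftmost Cartan factors last, to a weight vector — are exactly the right points to flag, and the identification of the matrix coefficient with the scalar is legitimate because the weight-$\la$ subspace of $M_\la$ is spanned by $v_\la$ and $\langle v_\la^\star,v_\la\rangle=1$.
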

\begin{proof}
We do induction on $n$. The statement for $n=1$ immediately follows from the defining relations. Suppose that $n>1$ and
present $f_{1\sms n}$ as $f_{1}f_{2\sms n}-qf_{2\sms n}f_{1}$. Observe that $v^\star_\la \hat e_{1\sms n}f_{2\sms n}f_{1}v_\la$
vanishes
since the vector $v^\star_\la\hat e_{1\sms n}$ is $\n_{2n}^-$-singular by Proposition \ref{sing+-}.
Now plug
$
\hat e_{1\sms n}=[h_{2\sms n}+1]_q\hat e_{2\sms n}e_{1}-[h_{2\sms n}]_qe_{1}\hat e_{2\sms n}
$
in $v^\star_\la \hat e_{1\sms n} f_{1\sms n}v_\la=
v^\star_\la  \hat e_{1\sms n}f_{1}f_{2\sms n}v_\la$ and push $f_{1}$ to the left. Observe that $f_1$ commutes with $\hat e_{2\sms n}$. The commutators of $f_1$ with the Cartan factors can be also neglected, as $f_1$ kills $v_\la^\star$.
We get for $v^\star_\la \hat e_{1\sms n} f_{1\sms n}v_\la$ the expression
$$
v^\star_\la ([h_{2\sms n}+1]_q\hat e_{2\sms n}[h_1]_q-[h_{2\sms n}]_q[h_1]_q\hat e_{2\sms n})f_{2\sms n}v_\la
=[\la_{1\sms n}]_qv^\star_\la \hat e_{2\sms n} f_{2\sms n}v_\la,
$$
since
$
([h_{2\sms n}+1]_q[h_1+1]_q-[h_{2\sms n}]_q[h_1]_q=[h_1+h_{2\sms n}+1]_q=[h_{1\sms n}]_q.
$
 Therefore,
$v^\star_\la \hat e_{1\sms n} f_{1\sms n}v_\la=[\la_{1\sms n}]_qv^\star_\la \hat e_{2\sms n} f_{2\sms n}v_\la=C_{1\sms n}(\la)$,
by the straightforward induction.
\end{proof}
In the matrix coefficient $v^\star_\la \hat e_{1\sms n}^{l-1} \hat e_{1\sms n}f_{1\sms n}^l v_\la$,
present the rightmost copy of
$\hat e_{1\sms n}$ as a sum of Chevalley monomials  $e_{\si(1)}\ldots e_{\si(n)}$, $\si\in S_n$, with coefficients
from $U_q(\h)$. By Lemma \ref{basics comms}, the generators  $e_i$ commute with  $f_{1\sms n}$ for all
$i\in[2,n-1]$. Therefore, non-zero contributions to the matrix coefficient
are made only  by the monomials
$$
\phi_1= e_{1} \ldots e_n,\quad
\phi_i= e_{i}\ldots e_n e_{i-1}\ldots e_1,
\quad
\phi_n= e_n \ldots e_1,
$$
where $i\in (1,n)$.
Let us calculate $\phi_i f_{1\sms n}^l v_\la $. We do it modulo  $\n^-_{2\sms n}Uv_\la$, which disappears when paired with $v^\star_\la \hat e_{1\sms n}^{l-1}$.

For every $i=1,\ldots, n$ and all $l\in \N$, define functions $D_{i,l}\colon \h^*\to \C$ by
\be
D_{1,l}(\la)
&=&q^{-l+1}[l]_q (-q)^{n-1} q^{\la_2+\ldots +\la_n}[\la_1]_q,
\nn\\
D_{i,l}(\la)&=&
q^{-l+1}[l]_q(-q)^{n-i}q^{-\la_1-\ldots -\la_{i-1}+\la_{i+1}+\ldots+\la_n}[\la_i]_q
, \quad i\in [2,n-1],
\nn\\
D_{n,l}(\la)&=&
q^{-l+1}[l]_qq^{l-1}q^{-\la_{1}-\ldots-\la_{n-1}}[\la_n-l+1]_q.
\nn
\ee

\begin{lemma}
The action of the monomials $\phi_i$, $i\in [1,n]$, on the vectors
$f_{1\sms n}^l v_\la$, $l \in \N$,  is given  by
$
\phi_i  f_{1\sms n}^l v_\la = D_{i,l}(\la) f_{1\sms n}^{l-1} v_\la \mod \n^-_{2\sms n}Uv_\la.
$
\end{lemma}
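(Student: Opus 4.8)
The plan is to prove the formula by a direct computation: I would move each Chevalley generator occurring in $\phi_i$ through the power $f_{1\sms n}^l$, from right to left, using the brackets of Lemma~\ref{basics comms} — crucially $[e_j,f_{1\sms n}]=0$ for $1<j<n$, and the fact that $[e_1,f_{1\sms n}]$, resp.\ $[e_n,f_{1\sms n}]$, is $f_{2\sms n}$, resp.\ $f_{1\sms n-1}$, times a Cartan element — and keep working modulo $\n^-_{2\sms n}Uv_\la$ throughout. The point is that exactly one commutator ``chain'' survives the reduction, and one tracks the scalar it accumulates.

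Here is the chain. In $\phi_i=e_i\ldots e_n\,e_{i-1}\ldots e_1$ the rightmost factor is $e_1$ when $i\geqslant 2$ and $e_n$ when $i=1$; since $e_\bullet v_\la=0$ only its commutator with $f_{1\sms n}$ contributes, and pushing that commutator past the remaining $f_{1\sms n}$'s — using the $q$-commutation of $f_{1\sms n}$ with $f_{2\sms n}$, resp.\ with $f_{1\sms n-1}$, which follows from the defining relations — collapses the geometric sum $\sum_{a=0}^{l-1}q^{\pm2a}$ into $q^{\mp(l-1)}[l]_q$; after normalising the displaced $q^{\pm h_1}$ on $v_\la$ this leaves, mod $\n^-_{2\sms n}Uv_\la$, a scalar times $f_{r\sms s}f_{1\sms n}^{l-1}v_\la$ with $f_{r\sms s}=f_{2\sms n}$ (case $i\geqslant 2$) or $f_{1\sms n-1}$ (case $i=1$). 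The remaining factors of $\phi_i$ are then applied in their given order. Since $[e_j,f_{1\sms n}]=0$ for $1<j<n$, each such $e_j$ can reach the carried root vector without disturbing the leftover $f_{1\sms n}$'s, and there it shortens it from one end: by $[e_j,f_{j\sms s}]=f_{j+1\sms s}q^{-h_j}$ from the left (for $j<i$), by $[e_j,f_{r\sms j}]$ — a scalar times $f_{r\sms j-1}$ times a Cartan element — from the right (for $j>i$); the displaced Cartan factors, pushed onto $v_\la$, produce the exponential prefactor of $D_{i,l}$, using $(\al_{1\sms n},\al_j)=0$ for $1<j<n$ and $(\al_{1\sms n},\al_j)=1$ for $j\in\{1,n\}$. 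Whenever instead $e_1$ or $e_n$ hits one of the leftover $f_{1\sms n}$'s, it creates a root vector in $\n^-_{2\sms n}$ which, by Lemmas~\ref{nilp} and~\ref{e1kf1m}, can be carried to the front, so the whole term dies modulo $\n^-_{2\sms n}Uv_\la$ — with the single exception noted below. When the carried vector is reduced to a single $f_m$, the leftmost factor of $\phi_i$ is $e_m$, and $[e_m,f_m]=[h_m]_q$ evaluated on $f_{1\sms n}^{l-1}v_\la$ (weight $\la-(l-1)\al_{1\sms n}$) is $[\la_m-(l-1)(\al_{1\sms n},\al_m)]_q$, i.e.\ $[\la_m-l+1]_q$ for $m\in\{1,n\}$ and $[\la_m]_q$ for $1<m<n$.

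For $1<i<n$ the leftmost $e_i$ does not touch the leftover $f_{1\sms n}$'s, so $[\la_i]_q$ appears directly and one reads off $D_{i,l}$; for $i=n$ the leftmost $e_n$ does meet them, but the extra terms carry $f_n=f_{nn}\in\n^-_{2\sms n}$ at the front and vanish, leaving $[\la_n-l+1]_q$ and $D_{n,l}$. The exceptional case is $i=1$: there $e_1$ acts last, on (essentially) $f_1 f_{1\sms n}^{l-1}v_\la$, giving $f_1 e_1 f_{1\sms n}^{l-1}v_\la+[h_1]_q f_{1\sms n}^{l-1}v_\la$, and the first summand does \emph{not} vanish — using $e_1 f_{1\sms n}^{l-1}v_\la=q^{-\la_1}[l-1]_q f_{2\sms n}f_{1\sms n}^{l-2}v_\la$ and $f_1 f_{2\sms n}=f_{1\sms n}+q f_{2\sms n}f_1$ one finds $f_1 e_1 f_{1\sms n}^{l-1}v_\la\equiv q^{-\la_1}[l-1]_q f_{1\sms n}^{l-1}v_\la \pmod{\n^-_{2\sms n}Uv_\la}$, and the two combine as $q^{-\la_1}[l-1]_q+[\la_1-l+1]_q=q^{-l+1}[\la_1]_q$, which supplies exactly the factor $q^{-l+1}[\la_1]_q$ of $D_{1,l}$. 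Collecting the $[l]_q$, the accumulated $q$-powers, the sign $(-q)^{n-i}$ (one $-q$ per right-peeling), and the final $q$-integer, and comparing with the definitions, would finish the argument. The main obstacle is purely the bookkeeping of the reduction modulo $\n^-_{2\sms n}Uv_\la$: one must check at each step that the discarded terms can be rewritten with a generator from $\n^-_{2\sms n}$ on the left, which is precisely what Lemmas~\ref{nilp} and~\ref{e1kf1m}, together with the elementary $q$-commutation relations among the root vectors $f_{r\sms s}$, are for.
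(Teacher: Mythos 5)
Your route is essentially the paper's: peel off the rightmost generator of $\phi_i$ ($e_1$ for $i\geqslant 2$, $e_n$ for $i=1$), collapse the resulting $q$-geometric sum into $[l]_q$ times a carried root vector $f_{2\sms n}$, resp.\ $f_{1\sms n-1}$, placed in front of $f_{1\sms n}^{l-1}v_\la$, let the remaining generators shorten the carried vector from its two ends while the Cartan factors are pushed onto the vector, and treat $i=1$ separately because the final $e_1$ also produces a non-vanishing cross term; your handling of that exceptional term, including the identity $q^{-\la_1}[l-1]_q+[\la_1-l+1]_q=q^{-l+1}[\la_1]_q$, is exactly the paper's computation.

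The one step whose justification fails as written is the vanishing of the cross terms produced by $e_n$ when $2\leqslant i\leqslant n-1$. You assert that when $e_n$ hits a leftover $f_{1\sms n}$ it ``creates a root vector in $\n^-_{2\sms n}$'' which can be carried to the front; but $[e_n,f_{1\sms n}]=-qf_{1\sms n-1}q^{h_n}$ and $f_{1\sms n-1}\notin \n^-_{2\sms n}$, so the created vector is not in $\n^-_{2\sms n}$, it cannot be moved past the remaining generators for free (already $[e_{n-1},f_{1\sms n-1}]\neq 0$), and Lemmas~\ref{nilp} and~\ref{e1kf1m} give you nothing here. The correct reason these terms die is the one the paper uses: since $e_jv_\la=0$, rewrite $\phi_i'f_{2\sms n}f_{1\sms n}^{l-1}v_\la$ as an iterated commutator $[e_{s^i_1},\ldots[e_{s^i_{n-1}},f_{2\sms n}f_{1\sms n}^{l-1}]\ldots]v_\la$ and expand by the Leibniz rule; in every term in which some generator acts on $f_{1\sms n}^{l-1}$ rather than on $f_{2\sms n}$, the residual descendant of $f_{2\sms n}$ cannot be fully consumed (not all of $e_2,\ldots,e_n$ reached it), so it is a nonzero element of $U_q(\g_{2\sms n})$ of strictly negative weight, hence lies in $\n^-_{2\sms n}U$, and it stands at the far left of that term. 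This is precisely the mechanism you invoke, correctly, in the $i=n$ case ($f_{nn}$ at the front); stated in this form it covers all $i\in[2,n-1]$ as well and repairs the gap. (A minor slip besides: in the first peeling step for $i=1$ the displaced Cartan factor is $q^{h_n}$, not $q^{\pm h_1}$, and that step is exact, so no reduction modulo $\n^-_{2\sms n}Uv_\la$ is needed there.)
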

\begin{proof}
Assuming $i\in [2,n]$, present $\phi_i$ as $\phi_{i}'e_1$, where $\phi_i'\in U_q(\n^+_{2\sms n})$.
Observe that the relation $f_{2\sms n}f_{1\sms n}=qf_{1\sms n} f_{2\sms n}$ easily follows from
 Lemma \ref{basics comms}. Along with the relation $[e_1,f_{1\sms n}]=f_{2\sms n}q^{-h_1}$
 from the same lemma, this yields
$$
\phi_{i}f_{1\sms n}^{l} v_\la
=\phi_{i}'f_{2\sms n}q^{-h_1}f_{1\sms n}^{l-1} v_\la+\phi_{i}'f_{1\sms n}f_{2\sms n}q^{-h_1}f_{1\sms n}^{l-2} v_\la+\ldots
=[l]_qq^{-\la_1}\phi_{i}'f_{2\sms n}f_{1\sms n}^{l-1} v_\la.
$$
Present $\phi_i'$ as $e_{s^i_1}\ldots e_{s^i_{n-1}}$ and write
$$
\phi_{i}'  f_{2\sms n}f_{1\sms n}^{l-1} v_\la=e_{s^i_1}\ldots e_{s^i_{n-1}}  f_{2n}f_{1\sms n}^{l-1} v_\la=[e_{s^i_1},\ldots [e_{s^i_{n-1}}, (f_{2n})(f_{1\sms n}^{l-1})]\ldots] v_\la.
$$
Applying the Leibnitz rule to these commutators, we can ignore $f_{1\sms n}^{l-1}$:
$$e_{s^i_1}\ldots e_{s^i_{n-1}}  f_{2\sms n}f_{1\sms n}^{l-1} v_\la=[e_{s^i_1},\ldots [e_{s^i_{n-1}},f_{2\sms n}]\ldots] f_{1\sms n}^{l-1} v_\la+\ldots
$$
The omitted terms contain residual vectors coming from $f_{2\sms n}$. They lie in $\n^-_{2\sms n}U$ and vanish in
the matrix coefficient.
Modulo  $\n^-_{2\sms n}U$, Lemma \ref{basics comms} yields
$$
\phi_{i}f_{1\sms n}^{l} v_\la=
q^{-l'}[l]_qq^{\dt_{in}l'}(-q)^{n-i}q^{-\la_1-\ldots -\la_{i-1}+\la_{i+1}+\ldots+\la_n}[\la_i-\dt_{in}l']_qf_{1\sms n}^{l-1} v_\la
, \quad i\in [2,n],
$$
where $l'=l-1$.  This proves the statement for $\phi_i$, $i\in [2,n]$.

Consider the remaining case of $\phi_1$.
Using the relation $[e_n,f_{1\sms n}]=-qf_{1\sms n-1}q^{h_n}$
and the relation $f_{1\sms n-1}f_{1\sms n}=q^{-1}f_{1\sms n} f_{1\sms n-1}$  from Lemma \ref{basics comms}, we get
$$
\phi_{1}f_{1\sms n}^{l} v_\la
=-q [l]_qq^{\la_n}e_1\ldots e_{n-1}f_{1\sms n-1}f_{1\sms n}^{l-1} v_\la
=(-q)^{n-1}[l]_q q^{\la_2+\ldots +\la_n} e_1f_{1}f_{1\sms n}^{l-1} v_\la.
$$
We have used $[e_i,f_{1\sms n}]=0$ for $i\in [2,n-1]$ in this calculation. Further,
$$
 e_1f_{1}f_{1\sms n}^{l-1} v_\la
 =
 [\la_1-l']_q f_{1\sms n}^{l-1} v_\la
 +
[l']_qq^{-\la_1}f_1f_{2\sms n}f_{1\sms n}^{l-2} v_\la.
$$
We replace the product $f_1f_{2\sms n}$ with $f_{1\sms n}$, since  the calculation is done modulo $\n^-_{2\sms n}U$. Thus,
$$
\phi_1 f_{1\sms n}^{l} v_\la
=[l]_q (-q)^{n-1} q^{\la_n+\ldots +\la_2}\Bigl([\la_1-l']_q
 +
[l']_qq^{-\la_1}\Bigr)f_{1\sms n}^{l-1} v_\la \mod \n^-_{2\sms n}U.
$$
Notice that the factor in the brackets is equal to
$
[\la_1-l']_q +[l']_qq^{-\la_1}=q^{-l'}[\la_1]_q.
$
This completes the proof.
\end{proof}
The coefficients $D_{i,l}(\la)$ satisfy the reduction formulas
\be
D_{1,l}(\la)-q^{-l'}[l]_q (-1)^{n-1}q^{\la_{1\sms n}-l'}[l']_q&=&
q^{-l'}[l]_q D_{1,1}(\la-l'\al_{1\sms n}),
\label{red1}
\\
D_{i,l}(\la)&=&q^{-l'}[l]_q D_{i,1}(\la-l'\al_{1\sms n}),\quad i\in[2,n],
\label{red2.n}
\ee
which readily follow from their definition. As above, $l'=l-1$.
\begin{lemma}
\label{lemma_elementary}
For all $l\in \N$,
\be
 v^\star_\la \hat e_{1\sms n}^{l} f_{1\sms n}^l v_\la
&=&[l_q]!\prod_{i=0}^{l-1}[\la_{1\sms n}-i]_q[\la_{2\sms n}-i]_q\ldots [\la_{nn}-i]_q .
\label{elementary}
\ee
\end{lemma}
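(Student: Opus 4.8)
The plan is to prove (\ref{elementary}) by induction on $l$. The case $l=1$ is precisely the identity $v^\star_\la\hat e_{1\sms n}f_{1\sms n}v_\la=C_{1\sms n}(\la)=\prod_{k=1}^n[\la_{kn}]_q$ established just above, which equals the right-hand side of (\ref{elementary}) at $l=1$. For $l>1$ I would write $\hat e_{1\sms n}^l=\hat e_{1\sms n}^{l-1}\hat e_{1\sms n}$ and expand the rightmost factor into Chevalley monomials with coefficients in $U_q(\h)$, say $\hat e_{1\sms n}=\sum_{i=1}^n c_i(h)\phi_i+(\text{terms contributing nothing})$. The reduction to the monomials $\phi_1,\ldots,\phi_n$ is the one already made before the previous lemma: the $e_j$ with $j\in[2,n-1]$ commute with $f_{1\sms n}$ and kill $v_\la$, while $v^\star_\la\hat e_{1\sms n}^{l-1}$ is $\n^-_{2\sms n}$-singular because $\hat e_{1\sms n}\in{}^{f_k}U$ for $k\in[2,n]$ (Proposition \ref{key}) and ${}^{f_k}U$ is a subalgebra. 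Since $v^\star_\la\hat e_{1\sms n}^{l-1}$ carries weight $\la-l'\al_{1\sms n}$ with $l'=l-1$, each $c_i(h)$ acts on it from the right as the scalar $c_i(\la-l'\al_{1\sms n})$; and the previous lemma gives $\phi_i f_{1\sms n}^l v_\la=D_{i,l}(\la)f_{1\sms n}^{l-1}v_\la$ modulo $\n^-_{2\sms n}Uv_\la$, the latter annihilated by $v^\star_\la\hat e_{1\sms n}^{l-1}$. This yields
$$
v^\star_\la\hat e_{1\sms n}^l f_{1\sms n}^l v_\la=\Lambda_l(\la)\cdot v^\star_\la\hat e_{1\sms n}^{l-1}f_{1\sms n}^{l-1}v_\la,\qquad\Lambda_l(\la):=\sum_{i=1}^n c_i(\la-l'\al_{1\sms n})\,D_{i,l}(\la),
$$
so it remains to prove $\Lambda_l(\la)=[l]_q\prod_{k=1}^n[\la_{kn}-l']_q$; the induction then closes against the hypothesis for $l-1$.

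To evaluate $\Lambda_l$ I would first identify $c_1(h)$, the coefficient of $\phi_1=e_1e_2\cdots e_n$. Unwinding the recursion $\hat e_{ik}=[h_{i+1\sms k}+1]_q\hat e_{i+1\sms k}e_i-[h_{i+1\sms k}]_q e_i\hat e_{i+1\sms k}$, the monomial $\phi_1$ is produced only by the chain of choices $-[h_{k+1\sms n}]_q e_k\hat e_{k+1\sms n}$ at each level, so $c_1(h)=\prod_{k=2}^n(-[h_{kn}]_q)=(-1)^{n-1}\prod_{k=2}^n[h_{kn}]_q$ (the other $c_i$ need not be computed separately). Now feed the reduction formulas (\ref{red1})--(\ref{red2.n}) into $\Lambda_l$: with $\mu=\la-l'\al_{1\sms n}$, the terms $i\in[2,n]$ give $q^{-l'}[l]_q\,c_i(\mu)D_{i,1}(\mu)$, while $i=1$ adds to this the extra contribution $q^{-l'}[l]_q(-1)^{n-1}q^{\la_{1\sms n}-l'}[l']_q\,c_1(\mu)$. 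Summing, and using the $l=1$ case in the form $\sum_{i=1}^n c_i(\mu)D_{i,1}(\mu)=v^\star_\mu\hat e_{1\sms n}f_{1\sms n}v_\mu=C_{1\sms n}(\mu)$, gives
$$
\Lambda_l(\la)=q^{-l'}[l]_q\Bigl(C_{1\sms n}(\mu)+(-1)^{n-1}q^{\la_{1\sms n}-l'}[l']_q\,c_1(\mu)\Bigr).
$$

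Finally, substituting $\mu_{kn}=\la_{kn}-l'$ for $k\geqslant 2$ and $\mu_{1\sms n}=\la_{1\sms n}-2l'$, one has $C_{1\sms n}(\mu)=[\la_{1\sms n}-2l']_q\prod_{k=2}^n[\la_{kn}-l']_q$ and $c_1(\mu)=(-1)^{n-1}\prod_{k=2}^n[\la_{kn}-l']_q$; factoring out the common $\prod_{k=2}^n[\la_{kn}-l']_q$ leaves the scalar $[\la_{1\sms n}-2l']_q+q^{\la_{1\sms n}-l'}[l']_q$, which equals $q^{l'}[\la_{1\sms n}-l']_q$ by the elementary identity $[a]_q+q^{a+b}[b]_q=q^b[a+b]_q$ (take $a=\la_{1\sms n}-2l'$, $b=l'$). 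Hence $\Lambda_l(\la)=[l]_q\prod_{k=1}^n[\la_{kn}-l']_q$, and combined with the induction hypothesis $v^\star_\la\hat e_{1\sms n}^{l-1}f_{1\sms n}^{l-1}v_\la=[l']_q!\prod_{i=0}^{l'-1}\prod_k[\la_{kn}-i]_q$ this gives (\ref{elementary}).

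The single delicate step is this last computation: substituting $\mu=\la-l'\al_{1\sms n}$ blindly into the $l=1$ answer produces $[\la_{1\sms n}-2l']_q$ in the first factor instead of the wanted $[\la_{1\sms n}-l']_q$, and it is exactly the extra summand of the reduction formula (\ref{red1}), weighted by $c_1(\mu)$, that repairs the mismatch. Everything else is routine $q$-number bookkeeping resting on facts already in place: the $\n^-_{2\sms n}$-singularity of $v^\star_\la\hat e_{1\sms n}^{l-1}$, the reduction to the monomials $\phi_i$, and the evaluation of $\phi_i f_{1\sms n}^l v_\la$ in the previous lemma.
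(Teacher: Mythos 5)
Your proposal is correct and follows essentially the same route as the paper's proof: the same expansion of the rightmost $\hat e_{1\sms n}$ into the monomials $\phi_i$ with Cartan coefficients (your $c_i$ are the paper's $a_i$, including $a_1(h)=(-1)^{n-1}\prod_{k=2}^n[h_{kn}]_q$), specialization at the shifted weight $\la-l'\al_{1\sms n}$, the reduction formulas (\ref{red1})--(\ref{red2.n}) combined with $\sum_i a_i(\mu)D_{i,1}(\mu)=C_{1\sms n}(\mu)$, and the final identity $[\la_{1\sms n}-2l']_q+q^{\la_{1\sms n}-l'}[l']_q=q^{l'}[\la_{1\sms n}-l']_q$ yielding the recurrence $[l]_q\prod_{k=1}^n[\la_{kn}-l']_q$ and the induction on $l$. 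No gaps to report.
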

\begin{proof}
Let us calculate the vector $\hat e_{1\sms n}f_{1\sms n}^l v_\la$ modulo $\n^-_{2\sms n}Uv_\la$.
Consider the presentation  $\hat e_{1\sms n}=\sum_{i=1}^na_i(h)\phi_i + \ldots$ with  suppressed
Chevalley monomials from $U\n^+_{2\sms n-1}$.
They  make zero contribution to the vector $\hat e_{1\sms n}f_{1\sms n}^lv_\la$, because
$\n^+_{2\sms n-1}$ commutes with  $ f_{1\sms n}$ and kills $f_{1\sms n}^{l}v_\la$, by Lemma \ref{basics comms}.
We need the explicit expression only for $a_1(h)=(-1)^{n-1}\prod_{i=2}^n[h_{in}]_q$,
which is readily found from the definition of $\hat e_{1\sms n}$.
We replace $\hat e_{1\sms n}$ with its  specialization at the weight  $\la-l'\al_{1\sms n}$ and write
\be
\hat e_{1\sms n}f_{1\sms n}^lv_\la = \sum_{i=1}^n a_{i}(\la-l'\al_{1\sms n})D_{i,l}(\la)f_{1\sms n}^{l-1}v_\la\mod \n^-_{2\sms n}Uv_\la.
\label{ef^l}
\ee
Observe that $\sum_{i=1}^na_i(\mu)D_{i,1}(\mu)=C_{1\sms n}(\mu)$ for all $\mu\in \h^*$.
For higher $l$, the coefficient $\sum_{i=1}^n a_{i}(\la-l'\al_{1\sms n})D_{i,l}(\la)$
before $f_{1\sms n}^l v_\la$ in (\ref{ef^l})
is found to be
\be
&&\sum_{i=2}^n a_{i}(\la-l'\al_{1\sms n})D_{i,l}(\la)
+a_{1}(\la-l'\al_{1\sms n})D_{1,l}(\la)
\nn\\
&=&q^{-l'}[l]_q\sum_{i=2}^n a_{i}(\la-l'\al_{1\sms n})D_{i,1}(\la-l'\al_{1\sms n})+a_{1}(\la-l'\al_{1\sms n})D_{1,l}(\la)
\nn\\
&=&q^{-l'}[l]_qC_{1\sms n}(\la-l'\al_{1\sms n})
+a_{1}(\la-l'\al_{1\sms n})\bigl(D_{1,l}(\la)-q^{-l'}[l]_q D_{1,1}(\la-l'\al_{1\sms n})\bigr).
\nn
\ee
We have used the reduction formulas (\ref{red2.n}) in the second equality.
Plug in here the expressions
$$
C_{1\sms n}(\la-l'\al_{1\sms n})=[\la_{1\sms n}-2l']_q \prod_{k=2}^n[\la_{kn}-l']_q,
\quad
a_1(\la-l'\al_{1\sms n})=(-1)^{n-1}\prod_{k=2}^n[\la_{kn}-l']_q,
$$
and  the expression for the difference $D_{1,l}(\la)-q^{-l'}[l]_q D_{1,1}(\la-l'\al_{1\sms n})$ from (\ref{red1}).
This gives the coefficient before $f_{1\sms n}^l v_\la$ in (\ref{ef^l}). It is divisible
by $q^{-l'}[l]_q\prod_{k=2}^n[\la_{kn}-l']_q $,
which can be factored out.
The remaining factor is
$$
[\la_{1\sms n}-2l']_q +[l']_q q^{\la_{1\sms n}-l'}=\frac{q^{\la_{1n}-2l'}-q^{-\la_{1n}+2l'} +(q^{l'}-q^{-l'}) q^{\la_{1n}-l'}}{q-q^{-1}}
=q^{l'}[\la_{1\sms n}-l']_q.
$$
Combining this with the multiplier $q^{-l'}[l]_q\prod_{k=2}^n[\la_{kn}-l']_q $ we obtain the recurrent formula
$\langle v^\star_\la \hat e_{1\sms n}^{l} f_{1n}^l v_\la \rangle=[l]_q\prod_{k=1}^n[\la_{kn}-l']_q \langle v^\star_\la \hat e_{1n}^{l-1} f_{1n}^{l-1} v_\la \rangle$. Induction on $l$ completes the proof.
\end{proof}

From now on we understand by $\hat{e}(\lb)$  the {\em normally} ordered PBW monomial.
To proceed with the calculation of matrix coefficients of the cyclic Shapovalov pairing, we fix another
ordering on the positive dynamical root vectors: we define
$$
\check{e}(\lb_k)=\hat e_{kn}^{l_{kn}} \ldots \hat e_{kk}^{l_{kk}},
\quad
\check{e}(\lb)=\check{e}(\lb_1)\ldots \check{e}(\lb_n).
$$
In the last section we demonstrate that $\check{e}(\lb)=\hat e(\lb)$, but the proof of this nontrivial fact is indirect
and based on the knowledge of the matrix coefficients $v_\la^\star \check{e}(\lb_k)\hat f(\lb_k)v_\la$.

\begin{lemma}
\label{in_row_reduction}
Put $\lb_1=(l_n,\ldots,l_1)\subset \Z_+^n$.
Then
$
v^\star_\la \check{e}(\lb_1) f(\lb_1) v_\la
=\prod_{k=1}^n v^\star_\la \hat e_{1\sms k}^{l_k}f_{1\sms k}^{l_k} v_\la.
$
\end{lemma}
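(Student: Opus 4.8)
The plan is to induct on $n$, peeling off at each step the leftmost factors $\hat e_{1n}^{l_n}$ and $f_{1n}^{l_n}$. For $n=1$ both sides equal $v^\star_\la e_1^{l_1}f_1^{l_1}v_\la$; so assume $n\geqslant 2$ and set $\lb_1'=(l_{n-1},\ldots,l_1)\in\Z_+^{n-1}$, so that $\check{e}(\lb_1)=\hat e_{1n}^{l_n}\check{e}(\lb_1')$ and $f(\lb_1)=f_{1n}^{l_n}f(\lb_1')$ with $\check{e}(\lb_1'),f(\lb_1')\in U_q(\g_{1\sms n-1})$. The one-step reduction to establish is
$$
v^\star_\la\check{e}(\lb_1)f(\lb_1)v_\la=\bigl(v^\star_\la\hat e_{1n}^{l_n}f_{1n}^{l_n}v_\la\bigr)\bigl(v^\star_\la\check{e}(\lb_1')f(\lb_1')v_\la\bigr),
$$
from which the statement follows by the inductive hypothesis applied to the second factor.

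For the one-step reduction I would first show that $\check{e}(\lb_1')$ can be carried to the right past $f_{1n}^{l_n}$ modulo the right ideal $\n^-_{2\sms n}U$. Write $\check{e}(\lb_1')=\sum_\alpha c_\alpha(h)u_\alpha$ with $u_\alpha\in U_q(\n^+_{1\sms n-1})$ a Chevalley monomial and each $c_\alpha(h)$ a polynomial in the elements $q^{\pm h_{lm}}$ with $2\leqslant l\leqslant m\leqslant n-1$ — the shape forced by the recursive definition of the $\hat e_{1k}$, $k\leqslant n-1$. Because $(\al_{lm},\al_{1n})=0$ for such $l,m$, each $c_\alpha(h)$ commutes with $f_{1n}$; and $[u_\alpha,f_{1n}]\in\n^-_{2\sms n}U$ by Lemma~\ref{e1kf1m} (with $i=j=1$, $k=n-1$, $m=n$). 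To promote this to $[u_\alpha,f_{1n}^{l_n}]\in\n^-_{2\sms n}U$ I would expand the commutator by the Leibniz rule and invoke the fact that $\n^-_{2\sms n}U$ is stable under left multiplication by $f_{1n}$; this holds because $f_{1n}$ is a linear combination of Chevalley monomials $f_{\si(1)}\cdots f_{\si(n)}$, $\si\in S_n$, each of which normalizes $\n^-_{2\sms n}U$ by Lemma~\ref{nilp}. Hence $[\check{e}(\lb_1'),f_{1n}^{l_n}]\in\n^-_{2\sms n}U$.

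Now $v^\star_\la\hat e_{1n}^{l_n}$ is $\g_{2\sms n}$-singular by Lemma~\ref{sing+-}, hence is annihilated by $\n^-_{2\sms n}$, so the correction term drops out of the matrix coefficient and $v^\star_\la\check{e}(\lb_1)f(\lb_1)v_\la=v^\star_\la\hat e_{1n}^{l_n}f_{1n}^{l_n}\check{e}(\lb_1')f(\lb_1')v_\la$. Since $\check{e}(\lb_1')$ and $f(\lb_1')$ raise and lower the weight by the same amount $\sum_{k=1}^{n-1}l_k\al_{1k}$, the vector $\check{e}(\lb_1')f(\lb_1')v_\la$ has weight $\la$ and therefore equals $\bigl(v^\star_\la\check{e}(\lb_1')f(\lb_1')v_\la\bigr)v_\la$, which gives the one-step reduction. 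Finally I would close the induction: the $\hat e_{1k}$ and $f_{1k}$ with $k\leqslant n-1$ are the dynamical and ordinary root vectors of $\g_{1\sms n-1}$ (the constants $k-i=(\rho,\al_{ik})-1$ being the same there), $U_q(\g_{1\sms n-1})v_\la$ and $v^\star_\la U_q(\g_{1\sms n-1})$ are the corresponding right and left Verma modules, and the cyclic $U_q(\g)$-form restricts on them to the cyclic $U_q(\g_{1\sms n-1})$-form with the same normalization; so the inductive hypothesis gives $v^\star_\la\check{e}(\lb_1')f(\lb_1')v_\la=\prod_{k=1}^{n-1}v^\star_\la\hat e_{1k}^{l_k}f_{1k}^{l_k}v_\la$, which together with the one-step reduction is exactly the claim.

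I expect the second paragraph to be the crux: a single commutator with $f_{1n}$ is handled by Lemma~\ref{e1kf1m}, but iterating it to deal with $f_{1n}^{l_n}$ requires Lemma~\ref{nilp} (to know that $\n^-_{2\sms n}U$ is preserved by $f_{1n}$), and one must separately keep track of the Cartan coefficients of $\check{e}(\lb_1')$ and check that they really commute with $f_{1n}$. The remainder is weight bookkeeping and the passage to $U_q(\g_{1\sms n-1})$.
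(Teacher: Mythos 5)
Your proof is correct and rests on essentially the paper's mechanism: commute the shorter dynamical $e$-vectors past the longer $f$-vectors modulo $\n^-_{2\sms n}U$ via Lemmas \ref{e1kf1m} and \ref{nilp}, kill the error terms using the $\g_{2\sms n}$-singularity of the bra vector (Lemma \ref{sing+-}), and finish with the one-dimensionality of the $\la$-weight space. The only difference is organizational: you induct on the rank, moving the whole block $\check{e}(\lb_1')$ past the single power $f_{1\sms n}^{l_n}$ and invoking the restriction of the cyclic form to $U_q(\g_{1\sms n-1})$ to close the induction, whereas the paper fixes the rank and, for each $k$, moves $\hat e_{1\sms k}^{l_k}$ past $f_{1\sms n}^{l_n}\cdots f_{1\sms k+1}^{l_{k+1}}$, handling the Cartan coefficients by specializing $\hat e_{1\sms k}$ at the weight of the bra.
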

\begin{proof}
The above factorization of the matrix coefficient is a consequence of the formula
$$
v^\star_\la \hat e_{1\sms n}^{l_n}\ldots \hat e_{1\sms k}^{l_k} f_{1\sms n}^{l_n}\ldots  f_{1\sms k}^{l_k} v_\la
=v^\star_\la (\hat e_{1\sms n}^{l_n}\ldots \hat e_{1\sms k+1}^{l_{k+1}})(f_{1n}^{l_n}\ldots f_{1\sms k+1}^{l_{k+1}}) \hat e_{1\sms k}^{l_k} f_{1\sms k}^{l_k} v_\la,
$$
which holds true for all  $k\in [1,n]$. Let us prove it.
Denote by $\psi$ the product $f_{1\sms n}^{l_n}\ldots f_{1\sms k+1}^{l_{k+1}}$.
It is sufficient to show that $\hat e_{1\sms k}$ commutes with $\psi$ modulo $\n^-_{2\sms n}U$,
as $\n^-_{2\sms n}$ annihilates $v^\star_\la \hat e_{1\sms n}^{l_n}\ldots \hat e_{1\sms k}^{l_k-i-1}$.
Let $\nu$ denote the weight of this vector  and let $\tilde e_{1\sms k}\in U_q(\n^+_{1\sms k})$ be
the specialization of $\hat e_{1\sms n}$ at $\nu$.
It  follows from Lemma \ref{e1kf1m} and Lemma \ref{nilp} that
$
[\tilde e_{1\sms k} ,\psi]\in \n^-_{2\sms n}U.
$
Therefore, we can replace
$ \hat e_{1\sms k} \psi$
with
$\psi \tilde e_{1\sms k}\mod\n^-_{2\sms n}U$.
Finally, observe that the Cartan coefficients of $\hat e_{1\sms k}$ are confined within
$U_q(\h_{2\sms k})$ and consequently commute with $\psi$. Therefore, $\psi\tilde e_{1\sms k}$
can be replaced with $\psi\hat e_{1\sms k}$ modulo $\n^-_{2\sms n}U$.

To finish the proof, observe that
$\hat e_{1\sms k}^{l_k} f_{1\sms k}^{l_k} v_\la=\langle v^\star_\la\hat e_{1\sms k}^{l_k}, f_{1\sms k}^{l_k} v_\la\rangle\times v_\la$. Varying $k$ from $1$ to $n$
we prove the  factorization of $v^\star_\la \check{e}(\lb_1) f(\lb_1) v_\la$.
\end{proof}
So far in this section we dealt with the matrix coefficients $v^\star_\la \check{e}(\lb_1) f(\lb_1) v_\la$,
i.e. of the form
$v_\la^\star U_q(\hat \n^+_{1\sms n}/\hat \n^+_{2\sms n})U_q(\hat \n^-_{1\sms n}/\hat \n^-_{2\sms n})v_\la$.
Upon obvious modifications, these results hold true for $v^\star_\mu \check{e}(\lb_k) f(\lb_k) v_\mu$,
for any $k\in[1,n]$ and $v^\star_\mu \in M_\mu^\star$,  $v_\mu \in M_\mu$ being $\g_{kn}$-singular
vectors.

\begin{corollary}
Suppose that $v_\mu\in M_\la$ and $v_\mu^\star \in M_\la^\star$ are $\g_{kn}$-singular vectors of weight $\mu$.
Then the  matrix coefficient
$ v^\star_\mu \check e(\lb_k) \hat f(\lb_k) v_\mu $
is equal to
\be
[l_{k}]_q!\ldots [l_{n}]_q!\prod_{k\leqslant s\leqslant r\leqslant n}\prod_{i=0}^{l_{r}-1}[\mu_{sr}-i]_q
\times
\prod_{k+1\leqslant s\leqslant r\leqslant n}\>\>\prod_{i=0}^{l_{r}-1}[\mu_{s r}-i+l_{s-1}+1]_q
\> v^\star_\mu v_\mu,
\ee
where $l_r=l_{kr}$, $r=k,\ldots, n$.
\label{row_mat_coef}
\end{corollary}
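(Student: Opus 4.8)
This is a corollary: it follows by combining Lemma~\ref{unhat}, Lemma~\ref{in_row_reduction} and Lemma~\ref{lemma_elementary}, applied not to the top row and the highest weight $\la$ but to the $k$-th row and the $\g_{kn}$-singular pair $v^\star_\mu,v_\mu$, and then multiplying the resulting factors. Write $l_r=l_{kr}$ for the entries of the $k$-th row, as in the statement. So the plan has two ingredients: (a) to justify that the three lemmas survive this shift of the row index, and (b) to do the bookkeeping that assembles them.

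For (a) I would record three observations. First, the dynamical root vectors entering $\check e(\lb_k)$ and $\hat f(\lb_k)$ are $\hat e_{km},\hat f_{km}$ with $k\leqslant m\leqslant n$; their recursions involve only $e_k,\dots,e_m$ (resp.\ $f_k,\dots,f_m$) and Cartan factors built from $h_{k+1},\dots,h_m$, so they lie in $U_q(\g_{kn})\cong U_q\bigl(\s\l(n-k+2)\bigr)$ and are there precisely the dynamical root vectors of that subalgebra, with the longest one in $U_q(\g_{km})$ being $\hat e_{km}$. Second, the cyclic pairing restricts to the $U_q(\g_{kn})$-Verma subquotient generated by $v^\star_\mu$ and $v_\mu$ (as in the proof of Proposition~\ref{diagonal}), with $\mu_{sr}$ taking over the role of $\la_{sr}$ in the top-row formulas. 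Third, Proposition~\ref{key} (the inclusions $\hat e_{ij}\in{}^{f_l}U$, together with the trivial case $\hat e_{kk}=e_k$) combined with the $\g_{kn}$-singularity of $v^\star_\mu$ shows that $v^\star_\mu\check e(\lb_k)$ is $\n^-_{k+1,n}$-singular, and this is exactly the vanishing input used in the proofs of Lemmas~\ref{unhat} and~\ref{in_row_reduction} (besides Lemmas~\ref{nilp} and~\ref{e1kf1m}, stated for arbitrary indices). Hence those proofs go through verbatim in the shifted setting, yielding in turn
\[
v^\star_\mu\check e(\lb_k)\,\hat f(\lb_k)\,v_\mu=A_{\lb,k}(\mu)\;v^\star_\mu\check e(\lb_k)\,f(\lb_k)\,v_\mu ,
\]
\[
v^\star_\mu\check e(\lb_k)\,f(\lb_k)\,v_\mu=\Bigl(\prod_{m=k}^{n}c_m\Bigr)v^\star_\mu v_\mu,\qquad \hat e_{km}^{l_m}f_{km}^{l_m}v_\mu=c_m v_\mu ,
\]
\[
c_m=[l_m]_q!\prod_{i=0}^{l_m-1}\prod_{s=k}^{m}[\mu_{sm}-i]_q ,
\]
where in the middle line the last equality is legitimate because $\hat e_{km}^{l_m}f_{km}^{l_m}v_\mu$ has weight $\mu$ and the $\mu$-weight space of the relevant Verma module is one-dimensional, and the third line is Lemma~\ref{lemma_elementary} read inside $U_q(\g_{km})$ with highest weight $\mu$.

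For (b) I would substitute these three formulas into one another:
\[
v^\star_\mu\check e(\lb_k)\,\hat f(\lb_k)\,v_\mu=A_{\lb,k}(\mu)\Bigl(\prod_{r=k}^{n}[l_r]_q!\Bigr)\Bigl(\prod_{m=k}^{n}\prod_{i=0}^{l_m-1}\prod_{s=k}^{m}[\mu_{sm}-i]_q\Bigr)v^\star_\mu v_\mu ,
\]
and then reindex: $\prod_{r=k}^{n}[l_r]_q!=[l_k]_q!\ldots[l_n]_q!$; the triple product equals $\prod_{k\leqslant s\leqslant r\leqslant n}\prod_{i=0}^{l_r-1}[\mu_{sr}-i]_q$ after relabelling $m\mapsto r$; and, by its definition, $A_{\lb,k}(\mu)=\prod_{k+1\leqslant s\leqslant r\leqslant n}\prod_{i=0}^{l_r-1}[\mu_{sr}-i+l_{s-1}+1]_q$. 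This is precisely the asserted expression.

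The only real obstacle is the verification in part (a): that every off-diagonal term which vanishes in the first-row proofs still vanishes once the row index is shifted. As indicated, this reduces to the $\n^-_{k+1,n}$-singularity of $v^\star_\mu\check e(\lb_k)$ supplied by Proposition~\ref{key}; everything past that point is formal manipulation of products.
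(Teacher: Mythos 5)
Your argument is correct and follows essentially the same route as the paper: the paper likewise obtains the corollary by applying Lemma~\ref{unhat} (giving the factor $A_{\lb,k}(\mu)$) together with Lemma~\ref{in_row_reduction} and Lemma~\ref{lemma_elementary} (giving the factorials and the first product), relying on the remark that these first-row results hold ``upon obvious modifications'' for the $k$-th row and a $\g_{kn}$-singular pair. Your part (a), spelling out that the shift is justified by the $\n^-_{k+1\sms n}$-singularity of $v^\star_\mu\check e(\lb_k)$ coming from Proposition~\ref{key} and by the restriction of the cyclic form to the $U_q(\g_{kn})$-submodules, is exactly the content the paper leaves implicit, so there is nothing to add.
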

\begin{proof}
Replacement of $\hat f(\lb_k)$ with $f(\lb_k)$ yields a scalar multiplier $A_{\lb,k}(\mu)$, as explained
by Lemma \ref{unhat}; hence the last product. Factorization of $ v^\star_\mu \check{e}(\lb_k) f(\lb_k) v_\mu $
is established by  Lemma \ref{in_row_reduction} and Lemma \ref{lemma_elementary}; hence the first product with the factorials.
\end{proof}
Denote the matrix coefficients from Corollary \ref{row_mat_coef} by $B_{\lb_{k}}(\mu)$
and define
$$
B_{\lb}(\la)=B_{\lb_1}(\la_{\lb,0})\ldots B_{\lb_n}(\la_{\lb,n-1}),
$$
where the weights $\la_{\lb,i}$ are introduced in (\ref{partial_weight}).

\begin{thm}
The matrix coefficient
$
v_\la^\star \check{e}(\kb)
\hat f(\lb)v_\la
$
is equal to
$
\dt_{\kb,\lb}B_{\lb}(\la)
$.
\label{main}
\end{thm}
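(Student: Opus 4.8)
The plan is to combine Proposition~\ref{diagonal} with the row-wise reduction already carried out in Corollary~\ref{row_mat_coef}. First I would invoke Proposition~\ref{diagonal}, whose statement was proved for $\hat e(\kb)$ with an arbitrary (although fixed) row-wise ordering; since $\check e(\lb)$ is exactly such an ordering, the same argument shows $v_\la^\star \check e(\kb)\hat f(\lb)v_\la=0$ unless $\kb=\lb$. This disposes of the Kronecker delta, so it remains to compute the diagonal value $v_\la^\star \check e(\lb)\hat f(\lb)v_\la$ and identify it with $B_\lb(\la)$.

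For the diagonal computation I would peel off the rows one at a time, from the first row $\lb_1$ to the last, exactly as in the proof of Proposition~\ref{diagonal}. Concretely, set $v_{\la_{\lb,1}}=\hat f(\lb_1)v_\la$ and $v^\star_{\la_{\kb,1}}=v_\la^\star \check e(\lb_n)\ldots\check e(\lb_2)$; by Lemma~\ref{sing+-} these are $\g_{2\sms n}$-singular, and the full matrix coefficient factors as $v_\la^\star \check e(\lb_1)\,\bigl(v^\star_{\la_{\lb,1}}\,\check e(\lb_n)\ldots\check e(\lb_2)\hat f(\lb_2)\ldots\hat f(\lb_n)\,v_{\la_{\lb,1}}\bigr)\,\hat f(\lb_1)v_\la$. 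Applying Corollary~\ref{row_mat_coef} with $k=1$ (and $\mu=\la=\la_{\lb,0}$) extracts the scalar $B_{\lb_1}(\la_{\lb,0})$ times the $\g_{2\sms n}$-matrix coefficient $v^\star_{\la_{\lb,1}}\check e(\lb_2)\ldots\check e(\lb_n)\hat f(\lb_n)\ldots\hat f(\lb_2)v_{\la_{\lb,1}}$, where now the roles of $M^\star_\la,M_\la$ are played by the $U_q(\g_{2\sms n})$-Verma submodules through the singular vectors $v^\star_{\la_{\lb,1}},v_{\la_{\lb,1}}$. Iterating this — at the $i$-th step the inner pairing is the $U_q(\g_{i+1\sms n})$-cyclic form evaluated on $v^\star_{\la_{\lb,i-1}}\check e(\lb_i)\ldots v_{\la_{\lb,i-1}}$, and the relevant weight is $\la_{\lb,i-1}$ by (\ref{partial_weight}) — produces the telescoping product $B_{\lb_1}(\la_{\lb,0})B_{\lb_2}(\la_{\lb,1})\cdots B_{\lb_n}(\la_{\lb,n-1})$, which is precisely $B_\lb(\la)$.

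Two points require care. The first is that Corollary~\ref{row_mat_coef} as stated applies to $v^\star_\mu\check e(\lb_k)\hat f(\lb_k)v_\mu$ with $v_\mu,v^\star_\mu$ being $\g_{kn}$-singular \emph{vectors in} $M_\la,M^\star_\la$; one must verify that the vectors $v_{\la_{\lb,i-1}}$, $v^\star_{\la_{\lb,i-1}}$ defined in (\ref{partial_sing}) are indeed such singular vectors, which is Lemma~\ref{sing+-}, and that the restriction of the ambient $U_q(\g)$-form to the $\g_{i\sms n}$-Verma submodules agrees, up to the normalization $v^\star_\mu v_\mu$, with the intrinsic $U_q(\g_{i\sms n})$-cyclic form — this is exactly Proposition~5.10 of the excerpt (the statement on restriction of cyclic forms). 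Since $v^\star_{\la_{\lb,i-1}}v_{\la_{\lb,i-1}}$ is itself a matrix coefficient of the type already computed at earlier stages, the bookkeeping of these normalization factors is what makes the product telescope correctly rather than accumulate spurious constants. The second point is that the outer factor being peeled off must be $\check e(\lb_i)$ paired against $\hat f(\lb_i)$ in that row only — but this is guaranteed because Proposition~\ref{diagonal} already forced $\kb_i=\lb_i$, so no cross-row terms survive.

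The main obstacle I anticipate is purely organizational: making the induction on $i$ airtight when the ground ring changes at each step (the inner pairing lives over $U_q(\g_{i+1\sms n})$, and the ``highest weight'' shifts from $\la$ to $\la_{\lb,i}$), and checking that the proportionality constant in Proposition~5.10 is correctly tracked so that $B_\lb(\la)$ emerges as an honest product with no leftover scalar. The algebraic content — the value of a single row's matrix coefficient — is entirely contained in Corollary~\ref{row_mat_coef}; the theorem is a clean assembly of that building block along the chain $\g_{1\sms n}\supset\g_{2\sms n}\supset\cdots\supset\g_{nn}$.
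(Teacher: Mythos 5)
Your proposal is correct in substance and rests on the same two pillars as the paper's proof --- Proposition \ref{diagonal} for the Kronecker delta, and Corollary \ref{row_mat_coef} telescoped along the chain $\g_{1\sms n}\supset\g_{2\sms n}\supset\cdots\supset\g_{nn}$ with the shifted weights $\la_{\lb,i}$ --- but you run the telescoping in the opposite direction, and that changes what auxiliary input is needed. In the monomial $\check e(\lb_1)\ldots\check e(\lb_n)\hat f(\lb_n)\ldots\hat f(\lb_1)$ the row-$1$ factors are the \emph{outermost} ones, so Corollary \ref{row_mat_coef} with $k=1$ does not apply verbatim at your first step: all remaining rows sit between $\check e(\lb_1)$ and $\hat f(\lb_1)$, and your displayed ``factorization'' does not parse as written (also $v^\star_{\la_{\lb,1}}$ should be $v_\la^\star\check e(\lb_1)$, and the middle block is $\check e(\lb_2)\ldots\check e(\lb_n)\hat f(\lb_n)\ldots\hat f(\lb_2)$, not the reversed product you first wrote). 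What rescues your order is exactly what you flag in the ``two points require care'' paragraph: the middle block lies in $\hat U_q(\g_{2\sms n})$ and the vectors $v_\la^\star\check e(\lb_1)$, $\hat f(\lb_1)v_\la$ are $\g_{2\sms n}$-singular by Lemma \ref{sing+-}, so the proposition on restriction of the cyclic form to $U_q(\g')$-Verma submodules gives the whole coefficient as $v_\la^\star\check e(\lb_1)\hat f(\lb_1)v_\la=B_{\lb_1}(\la)$ (Corollary \ref{row_mat_coef}, $k=1$) times the \emph{intrinsic normalized} $U_q(\g_{2\sms n})$-cyclic value of rows $2,\ldots,n$; an induction on the rank, using that $B_{\lb_k}(\mu)$ depends only on $(\mu,\al_j)$ with $j\geqslant k$, then yields the product. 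The paper avoids this bookkeeping by peeling the \emph{innermost} row first: since $\check e(\lb_n)$ and $\hat f(\lb_n)$ are adjacent, Corollary \ref{row_mat_coef} applies directly with $\mu=\la_{\lb,n-1}$ and the singular vectors of (\ref{partial_sing}); its right-hand side already carries the factor $v^\star_\mu v_\mu$, which is precisely the coefficient of the remaining rows, so the recursion $v^\star_{\la_{\lb,i}}v_{\la_{\lb,i}}=B_{\lb_i}(\la_{\lb,i-1})\,v^\star_{\la_{\lb,i-1}}v_{\la_{\lb,i-1}}$ runs entirely inside $M_\la^\star\tp M_\la$, with no change of ambient algebra, no intrinsic forms, and no normalization constants to track. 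So your route does work, but only after the repair you yourself anticipate; the paper's order of peeling makes that repair unnecessary.
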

\begin{proof}
The Kronecker symbol is justified in Proposition \ref{diagonal}.
Further, let $v_{\la_{\lb,i}}\in M_\la$ and $v_{\la_{\lb,i}}^\star\in M_\la^\star$, $i\in [0,n)$, be
the vectors defined in (\ref{partial_sing}),
where the positive PBW monomial is ordered as $\check e_{\lb}$.
Due to Lemma \ref{sing+-}, the matrix coefficient
$
v_\la^\star \check{e}(\lb)
\hat f(\lb)v_\la
$ factorizes to
$$
v_{\la_{\lb,n-1}}^\star \check{e}(\lb_n)\hat f(\lb_n)v_{\la_{\lb,n-1}}
=B_{\lb_n}(\la_{\lb,n-1}) v_{\la_{\lb,n-1}}^\star v_{\la_{\lb,n-1}}=\ldots =
B_{\lb_n}(\la_{\lb,n-1})\ldots B_{\lb_1}(\la_{\lb,0}) v_{\la_{\lb,0}}^\star v_{\la_{\lb,0}},
$$
where $v_{\la_{\lb,0}}^\star v_{\la_{\lb,0}}=v_{\la}^\star v_{\la}=1$.
At every step $k\in [1,n]$ we apply Corollary \ref{row_mat_coef} in order to calculate the matrix coefficient
$
v_{\la_{\lb,k-1}}^\star \check{e}(\lb_k)\hat f(\lb_k)v_{\la_{\lb,k-1}}
$
with $\check{e}(\lb_k),\hat f(\lb_k) \in U_q(\g_{kn})$
and the $\g_{kn}$-singular vectors $v_{\la_{\lb,k-1}}^\star, v_{\la_{\lb,k-1}}$
of weight $\mu=\la_{\lb,k-1}$.
\end{proof}
\begin{corollary}
Suppose the weight $\la$ is such that $B_{\lb}(\la)\not=0$ for all $\lb\in \Tg$.
Then the system $\{\hat f(\lb) v_\la\}_{\lb\in\Tg}$
forms a basis in $M_\la$ and $ \{\frac{1}{B_{\lb}(\la)}v_\la^\star\check e(\lb)\}_{\lb\in\Tg}$ is its dual
 basis in $M_\la^\star$. The formal sum $\sum_{\lb}\frac{1}{B_{\lb}(\la)}\hat f({\lb})v_\la\tp v_\la^\star \check{e}({\lb}) \in M_\la\tp M_\la^\star$
is the inverse of the cyclic Shapovalov pairing.
\end{corollary}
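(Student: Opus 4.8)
The plan is to obtain the corollary as a purely formal consequence of Theorem~\ref{main}, so that no new computation is required; what remains is only bookkeeping.

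I would first fix a weight $\nu\in\h^*$ and work inside the finite dimensional subspaces $M_\la[\nu]$ and $M_\la^\star[\nu]$. By the PBW theorem the standard monomials $\{f(\lb)v_\la\}_{\lb\in\Tg}$ form a basis of $M_\la$ and $\{v_\la^\star e(\lb)\}_{\lb\in\Tg}$ a basis of $M_\la^\star$; since $\hat f(\lb)v_\la$ carries the same weight $\la_{\lb,n}$ as $f(\lb)v_\la$ by~(\ref{partial_weight}), the set $\Tg_\nu:=\{\lb\in\Tg:\la_{\lb,n}=\nu\}$ indexes dynamical monomials $\hat f(\lb)v_\la$ all lying in $M_\la[\nu]$, and $\#\Tg_\nu=\dim M_\la[\nu]=\dim M_\la^\star[\nu]$. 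Hence it suffices to prove linear independence. If $\sum_{\lb\in\Tg_\nu}c_\lb\,\hat f(\lb)v_\la=0$, pairing under the cyclic form with $v_\la^\star\check e(\kb)$ and using Theorem~\ref{main} (whose Kronecker $\dt$ is Proposition~\ref{diagonal}) gives $c_\kb B_\kb(\la)=0$; the hypothesis $B_\kb(\la)\neq0$ forces $c_\kb=0$. Thus $\{\hat f(\lb)v_\la\}_{\lb\in\Tg}$ is a basis of $M_\la$, and the symmetric argument (pairing a vanishing combination $\sum_\kb d_\kb\,v_\la^\star\check e(\kb)$ on the right with $\hat f(\lb)v_\la$) shows $\{v_\la^\star\check e(\lb)\}_{\lb\in\Tg}$ is a basis of $M_\la^\star$. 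Theorem~\ref{main} now reads $\langle v_\la^\star\check e(\lb),\hat f(\kb)v_\la\rangle=\dt_{\lb\kb}B_\lb(\la)$, so $\{\tfrac1{B_\lb(\la)}v_\la^\star\check e(\lb)\}_{\lb\in\Tg}$ is precisely the basis of $M_\la^\star$ dual to $\{\hat f(\lb)v_\la\}_{\lb\in\Tg}$; equivalently the cyclic pairing is nondegenerate and $M_\la$ is irreducible.

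For the inverse form I would argue weight by weight. The cyclic pairing restricts to a nondegenerate pairing $M_\la^\star[\nu]\tp M_\la[\nu]\to\C$ between finite dimensional spaces, whose inverse is the canonical copairing element; written in terms of the dual bases above it equals $\sum_{\lb\in\Tg_\nu}\tfrac1{B_\lb(\la)}\hat f(\lb)v_\la\tp v_\la^\star\check e(\lb)$. Summing over $\nu$ produces the formal series of the statement as a well-defined element of the completion $\widehat{M_\la\tp M_\la^\star}=\prod_\nu\bigl(M_\la[\nu]\tp M_\la^\star[\nu]\bigr)$, and ``inverse of the cyclic Shapovalov pairing'' is understood in the usual sense: with $T=\sum_\lb\tfrac1{B_\lb(\la)}\hat f(\lb)v_\la\tp v_\la^\star\check e(\lb)$ one has $(\id\tp\langle\cdot,\cdot\rangle)(T\tp x)=x$ for every $x\in M_\la$ and the mirror identity $(\langle\cdot,\cdot\rangle\tp\id)(x^\star\tp T)=x^\star$ for every $x^\star\in M_\la^\star$. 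Both are immediate from the dual-basis property.

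The argument has essentially no hard step: all the substance sits in Theorem~\ref{main}, and the deduction uses only the invertibility of a diagonal matrix with nonzero entries together with a dimension count. The two points I would state carefully are (i) the weight bookkeeping that upgrades linear independence to the basis property, which rests on $\hat f(\lb)v_\la$ and $f(\lb)v_\la$ sharing the weight $\la_{\lb,n}$, and (ii) the precise completion of $M_\la\tp M_\la^\star$ in which the formal sum lives and in which the two defining identities for $T$ make sense. It is also worth recording, in line with the introduction, that $\{\la:B_\lb(\la)=0\text{ for some }\lb\in\Tg\}$ is a countable union of hypersurfaces read off from the explicit product formula for $B_\lb$, contained in $\cup_{\al\in R^+}\{\la:q^{2(\la,\al)}\in q^{2\Z}\}$, so the hypothesis is met for generic $\la$.
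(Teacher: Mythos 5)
Your proposal is correct and follows essentially the same route as the paper: a weight-by-weight count showing the dynamical monomials match the standard PBW basis in number, combined with the diagonal Gram matrix $\dt_{\kb,\lb}B_{\lb}(\la)$ of Theorem~\ref{main} and the hypothesis $B_{\lb}(\la)\neq 0$, yields linear independence, the basis property, and the dual-basis statement at once. Your extra care about the completion in which the formal sum lives and the two defining identities for the inverse is a harmless elaboration of what the paper leaves implicit.
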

\begin{proof}
Both  systems $\{\hat f(\lb)v_\la\}$,  $\{v_\la^\star\check e({\lb})\}$  have the same number of vectors of a given weight
as their standard PBW counterparts. Their Gram matrix with respect to the Shapovalov pairing is non-degenerate,
provided all $B_{\lb}(\la)\not =0$. Therefore, for such $\la$, $\{\hat f({\lb})v_\la\}$ is a basis
in $M_\la$ and $\{\frac{1}{B_{\lb}}v_\la^\star\check e({\lb})\}$ is its dual in $M_\la^\star$, according to
Theorem \ref{main}.
\end{proof}
The classical version of these results is straightforward. One should pass to the algebra $U_\hbar(\g)$ and
take the zero fiber $\!\mod \hbar$. This operation converts $[x]_q$ into $x$ for any indeterminate $x$. The
classical version of the dynamical root vectors and the formulas for the matrix coefficients are immediate.

Recall from \cite{Sh,DCK} that the Shapovalov form on $M_\la$ is invertible if and only if
$q^{2(\la+\rho,\al)}\not \in q^{2\N}$ (respectively, $(\la,\al)+(\rho,\al)\not \in \N$ for $U(\g)$)
for all $\al\in R^+$.
In our notation, this criterion translates to  $q^{2\la_{ij}}\not \in q^{2\Z_+}$ (respectively, $\la_{ij}\not \in \Z_+$)
for all $i,j$ such that $i\leqslant j$.
On the other hand, one can easily see that the set of zeros of $B_{\lb}(\la)$, $\lb \in \Tg$, is larger although
contained in the union $\cup_{\al\in R^+}\{\la|q^{2(\la,\al)}\in q^{2\Z}\}$ (in the union of integer hyperplanes
$(\la,\al)\in \Z$ in the classical case). Therefore, the system $\hat f(\lb)v_\la$, $\lb \in \Tg$, fails to be a basis for special values of weights.
We consider this effect in a more detail on the example of $\s\l(3)$ in the last section.
\begin{example}
\label{exmp1k1m} \em
Here is an example which will play a role in the next section.
We   need the explicit expression for the
 matrix coefficient $v_\la^\star \hat e_{1\sms m}\hat e_{1\sms k}\hat f_{1\sms m}\hat f_{1\sms k}v_\la$, $k<m$,
which is
\be
\prod_{j=2}^k(\la_{j\sms k}+1)
\prod_{j=2}^k[\la_{j\sms m}+1]_q[\la_{k+1\sms m}+2]_q\prod_{j=k+2}^m[\la_{j\sms m}+1]_qC_{1\sms k}(\la)C_{1\sms m}(\la),
\label{1k1m}
\ee
according to the general formula. As usual, the products are present only if the lower bounds do not exceed the upper bounds.
The products before $C_{1\sms k}(\la)C_{1\sms m}(\la)$ results from the transition
$\hat f_{1\sms k}\to f_{1\sms k}$, $\hat f_{1\sms m}\to f_{1\sms m}$.
The Cartan coefficients $[h_{i\sms m}+1]_q$ from $\hat f_{1\sms m}$ commute with $f_{1\sms k}$ unless $i= k+1$,
while $[h_{k+1\sms m},f_{1\sms k}]=f_{1\sms k}$. This accounts for $2$ in the corresponding factor.
 \end{example}
\section{Contravariant Shapovalov form }
\label{SecCShF}
In this section we refine the obtained results and show that the dual bases in $M_\la^\star$ and $M_\la$
give rise to an orthogonal basis for the contravariant form on $M_\la$.
The key step is to prove that the dynamical positive (negative) root vectors commute within each row. This facilitates the
equalities $\check{e}(\lb_k)=\hat e(\lb_k)$ for all $k\in [1,n]$ and $\check{e}(\lb)=\hat e(\lb)$ for all $\l\in \Tg$.

We start with the following simple case, which will be the base for a further induction.
\begin{lemma}
For all $m\in [1,n]$, one has  $[f_1,\hat f_{1\sms m}]=0$ and
 $[e_1,\hat e_{1\sms m}]=0$.
 \label{row_com_1}
\end{lemma}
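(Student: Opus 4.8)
The plan is to prove both identities simultaneously by induction on $m$, using the recursive definition of the dynamical root vectors and the Chevalley involution to halve the work. The base case $m=1$ is trivial since $\hat f_{11}=f_1$ and $\hat e_{11}=e_1$. For the inductive step, I would work with $[f_1,\hat f_{1\sms m}]$ and apply $\omega$ at the end to get the statement for $\hat e_{1\sms m}$ (recall $\omega(\hat e_{1\sms m})=\hat f_{1\sms m}$ and $\omega(f_1)=e_1$, so $\omega\bigl([f_1,\hat f_{1\sms m}]\bigr)=-[e_1,\hat e_{1\sms m}]$ up to sign, which suffices).

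First I would expand $\hat f_{1\sms m}$ via the generalized-commutator presentation
$$
\hat f_{1\sms m}=f_{1}\hat f_{2\sms m}[h_{2\sms m}+1]_q-\hat f_{2\sms m}f_{1}[h_{2\sms m}]_q,
$$
and compute $[f_1,\hat f_{1\sms m}]$ term by term. The key inputs are: (i) $f_1$ commutes with $\hat f_{2\sms m}$ (since $\hat f_{2\sms m}\in U_q(\b^-_{2\sms m})$, built from $f_2,\dots,f_m$, and $[f_1,f_j]=0$ for $j\geqslant 3$, while the $f_2$-dependence is controlled — actually one needs $[f_1,\hat f_{2\sms m}]=0$, which should follow from Lemma~\ref{basics comms} together with the structure of $\hat f_{2\sms m}$, or more cleanly from the fact that $\hat f_{2\sms m}p_{3\sms m}$ is proportional to $p_{3\sms m}f_{2\sms m}$ and $[f_1,f_{2\sms m}]_q$-type relations); (ii) $f_1$ commutes with the Cartan factors $[h_{2\sms m}+1]_q$ and $[h_{2\sms m}]_q$ up to a controlled shift, namely $[h_{2\sms m},f_1]=f_1$ since $(\al_2+\dots+\al_m,\al_1)=-1$ contributes $-1$ but the offset $m-2$ and the structure of $h_{2\sms m}=h_2+\dots+h_m+(m-2)$ gives $f_1 h_{2\sms m}=(h_{2\sms m}+1)f_1$ — wait, more precisely $h_2 f_1 = f_1(h_2 - (\al_2,\al_1)) = f_1(h_2+1)$, so $f_1[h_{2\sms m}+c]_q=[h_{2\sms m}+c+1]_q f_1$. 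Using these, $[f_1,\hat f_{1\sms m}]$ collapses: the two terms produce $f_1^2\hat f_{2\sms m}$ with Cartan coefficient $\bigl([h_{2\sms m}+2]_q-[h_{2\sms m}+1]_q\bigr)$-type combinations against the cross terms, and I expect everything to cancel by the same $q$-number identity used repeatedly in the proof of Proposition~\ref{key}, namely $[a+1]_q-q[a]_q=q^{-a}$ and its variants.

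The main obstacle I anticipate is establishing cleanly that $[f_1,\hat f_{2\sms m}]=0$ — i.e. that the simple generator $f_1$, which lies outside the subalgebra $U_q(\g_{2\sms m})$ but adjacent to it, commutes with the dynamical root vector $\hat f_{2\sms m}$. This is a statement about the "left edge" of a dynamical root vector and does not follow from the right-commutativity statements in Proposition~\ref{key}; it should instead follow by a short separate induction using Lemma~\ref{basics comms} (specifically $[f_1,f_{2\sms k}]_q=0$ and $[f_1,f_j]=0$ for $j\geqslant 3$) applied to the generalized-commutator expansion of $\hat f_{2\sms m}$, together with the observation that all Cartan coefficients appearing in $\hat f_{2\sms m}$ lie in $U_q(\h_{3\sms m})$ and hence commute with $f_1$. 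Once that commutation is in hand, the rest is the routine $q$-number bookkeeping sketched above, and the Chevalley involution delivers the $e$-side for free.
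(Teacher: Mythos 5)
Your proposal hinges on the claim (i) that $[f_1,\hat f_{2\sms m}]=0$, and this claim is false. Already for $m=2$ one has $\hat f_{2\sms 2}=f_2$, and $[f_1,f_2]\neq 0$ in $U_q(\s\l(3))$: if $f_1f_2=f_2f_1$ then $f_{12}=f_1f_2-qf_2f_1$ would be proportional to $f_2f_1$, contradicting the PBW basis. Nothing in Lemma \ref{basics comms} helps here: it gives $[f_k,f_{2\sms m}]=0$ only for $k\in(2,m)$ and $q$-commutation relations such as $[f_{2\sms m},f_2]_q=0$, while by definition $[f_1,f_{2\sms m}]_q=f_{1\sms m}\neq 0$; likewise the extremal-projector identity only controls $\hat f_{2\sms m}$ modulo $\n^-_{3\sms m}U$ and says nothing about commutation with $f_1$. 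Since your inductive step "collapses" the two terms of $\hat f_{1\sms m}$ precisely by moving $f_1$ across $\hat f_{2\sms m}$, the argument breaks at its central point. (A secondary slip: from $h_{2\sms m}f_1=f_1(h_{2\sms m}+1)$ one gets $f_1[h_{2\sms m}+c]_q=[h_{2\sms m}+c-1]_qf_1$, i.e.\ $[h_{2\sms m}+c]_qf_1=f_1[h_{2\sms m}+c+1]_q$, the opposite of what you wrote.)

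The ingredient you are missing is the quantum Serre relation between $f_1$ and $f_2$, which is exactly what the paper uses. Every monomial of $\hat f_{2\sms m}$ contains a single factor $f_2$, while all its other factors ($f_j$ with $j\geqslant 3$ and Cartan elements $[h_{j\sms m}+c]_q$ with $j\geqslant 3$) do commute with $f_1$; hence the Serre relation extends to $f_1^2\hat f_{2\sms m}=[2]_qf_1\hat f_{2\sms m}f_1-\hat f_{2\sms m}f_1^2$. Substituting this into $f_1\hat f_{1\sms m}=f_1^2\hat f_{2\sms m}[h_{2\sms m}+1]_q-f_1\hat f_{2\sms m}f_1[h_{2\sms m}]_q$ gives $f_1\hat f_{2\sms m}f_1\bigl([2]_q[h_{2\sms m}+1]_q-[h_{2\sms m}]_q\bigr)-\hat f_{2\sms m}f_1^2[h_{2\sms m}+1]_q$, and since $[2]_q[h_{2\sms m}+1]_q-[h_{2\sms m}]_q=[h_{2\sms m}+2]_q$ and $[h_{2\sms m}+c]_qf_1=f_1[h_{2\sms m}+c+1]_q$, this coincides with $\hat f_{1\sms m}f_1$. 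In particular no induction on $m$ and no commutation of $f_1$ with $\hat f_{2\sms m}$ is needed. Your use of the Chevalley involution to deduce $[e_1,\hat e_{1\sms m}]=0$ from the $f$-side is correct and is exactly what the paper does.
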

\begin{proof}
This is an immediate consequence of the Serre relation:
\be
f_1f_{1.m}&=&
f_1(f_1\hat f_{2.m}[h_{2.m}+1]_q-\hat f_{2.m}f_1[h_{2.m}]_q)
\nn\\
&=&
f_1\hat f_{2.m}f_1([2]_q[h_{2.m}+1]_q-[h_{2.m}]_q)
-\hat f_{2.m}f_1^2[h_{2.m}+1]_q
=f_{1.m}f_1,
\nn
\ee
since the difference in the brackets is equal to $[h_{2.m}+2]_q$. Applying $\omega$ to
$[f_1,\hat f_{1\sms m}]=0$ gives  $[e_1,\hat e_{1\sms m}]=0$.
\end{proof}
One can directly check $[\hat f_{12},\hat f_{1m}]=0$ for all $m$ via a more cumbersome calculation.
We have not found a direct general proof, apart from the above simplest cases, and
use a roundabout approach based
on already obtained results. Namely, we will show that positive dynamical PBW system vanishes when
paired with the
element $[\hat f_{1k},\hat f_{1m}]v_\la$ for all $\la$. Since it is a basis in $M_\la^\star$ and the pairing
is non-degenerate for generic $\la$,
that will be sufficient to prove the equality $[\hat f_{1k},\hat f_{1m}]=0$.
\begin{propn}
\label{row-commute}
For every $i\in [1,n)$, the algebra $U_q(\n^\pm_{in}/\n^\pm_{i+1\sms n})$ is commutative.
\end{propn}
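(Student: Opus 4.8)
The plan is to follow the roundabout route announced above: for $1\leqslant k\leqslant m\leqslant n$ set $z:=[\hat f_{1k},\hat f_{1m}]\in U_q(\b^-)$ and prove that $z$ annihilates $v_\la$ for generic $\la$, whence $z=0$. First I would reduce to $i=1$. By their recursive definition the dynamical root vectors $\hat f_{ij}$ with $i\leqslant j\leqslant n$ are literally the first--row dynamical root vectors of the subalgebra $U_q(\g_{in})\cong U_q(\s\l(n-i+2))$, all Cartan coefficients involved lying in $U_q(\h_{in})$; and applying the Chevalley involution $\omega$ converts $[\hat f_{1k},\hat f_{1m}]=0$ into $[\hat e_{1k},\hat e_{1m}]=0$. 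So it suffices to prove $[\hat f_{1k},\hat f_{1m}]=0$ in $U_q(\s\l(n+1))$; the case $k=1$ is Lemma \ref{row_com_1} and $k=m$ is trivial, so assume $2\leqslant k<m\leqslant n$.

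Since $z\in U_q(\b^-)=U_q(\n^-)U_q(\h)$ has weight $-(\al_{1k}+\al_{1m})$, writing it in PBW form as $\sum_{\kb}f(\kb)\,\phi_{\kb}$ with $\phi_{\kb}\in U_q(\h)$ and acting on $v_\la$ shows that $z=0$ is equivalent to $zv_\la=0$ for $\la$ in a Zariski--dense set, in particular for generic $\la$; and for generic $\la$ the cyclic Shapovalov pairing is non-degenerate and $\{v_\la^\star\check e(\kb)\}_{\kb\in\Tg}$ is a basis of $M_\la^\star$, by the corollary to Theorem \ref{main}. Hence it suffices to show $v_\la^\star\check e(\kb)\,z\,v_\la=0$ for every $\kb\in\Tg$, and by weight considerations only $\kb$ with $\wt\check e(\kb)=\al_{1k}+\al_{1m}$ contribute. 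The crucial auxiliary observation is that $zv_\la$ is $\n^+_{2\sms n}$--singular: indeed $\hat f_{1k},\hat f_{1m}\in U^{e_p}$ for all $p\geqslant 2$ by Proposition \ref{key}, $U^{e_p}$ is a subalgebra, so $z\in U^{e_p}$ and $e_p\,zv_\la=[e_p,z]v_\la\in Ue_p\,v_\la=0$.

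Next I would dispose of all $\kb$ but one. Every dynamical root vector $\hat e_{pj}$ with $p\geqslant 2$ is a sum of Cartan elements times non-empty monomials in $e_p,\dots,e_j\subset\{e_2,\dots,e_n\}$, hence annihilates every $\n^+_{2\sms n}$--singular vector. Consequently, if $\kb$ has a non-zero row of index $\geqslant 2$, then factoring $\check e(\kb)=\check e(\kb_1)\,\check e(\kb_2)\cdots\check e(\kb_n)$ and observing that the rightmost factor of $\check e(\kb_2)\cdots\check e(\kb_n)$ is a power of some $\hat e_{pj}$ with $p\geqslant 2$, we get $\bigl(\check e(\kb_2)\cdots\check e(\kb_n)\bigr)zv_\la=0$, so this $\kb$ contributes nothing. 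Among $\kb$ supported on the first row the weight $\al_{1k}+\al_{1m}$ with $k<m$ forces $l_{1k}=l_{1m}=1$ (a square $\hat e_{1a}^{2}$ is excluded on weight grounds, since $2\al_{1a}$ has only even coefficients), i.e. $\check e(\kb)=\hat e_{1m}\hat e_{1k}$. Thus everything reduces to the single identity
$$
v_\la^\star\,\hat e_{1m}\hat e_{1k}\,\hat f_{1k}\hat f_{1m}\,v_\la
=
v_\la^\star\,\hat e_{1m}\hat e_{1k}\,\hat f_{1m}\hat f_{1k}\,v_\la ,
$$
whose right--hand side is exactly the quantity (\ref{1k1m}) computed in Example \ref{exmp1k1m}.

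The main obstacle is the evaluation of the left--hand side, the ``anti--normally ordered'' matrix coefficient $v_\la^\star\hat e_{1m}\hat e_{1k}\hat f_{1k}\hat f_{1m}v_\la$: the reduction of Lemma \ref{unhat}, which replaces each $\hat f_{1r}$ by $f_1\cdots f_r$ times a Cartan factor modulo $\n^-_{2\sms n}Uv_\la$ (a term killed against the $\n^-_{2\sms n}$--singular vector $v_\la^\star\hat e_{1m}\hat e_{1k}$, by Proposition \ref{key}), was set up for the normal decreasing ordering of the $\hat f$'s, and for the reversed order the residual terms must be re-examined. Concretely I would convert $\hat f_{1m}$ first, verify via Lemma \ref{basics comms} and Lemma \ref{nilp} that the residual $\hat f_{1k}\cdot\n^-_{2\sms m}Uv_\la$ still lies in $\n^-_{2\sms n}Uv_\la$ (the only delicate commutators being $[\hat f_{1k},f_p]$ for $p=k,k+1$), then convert $\hat f_{1k}$, arriving at a Cartan scalar times $v_\la^\star\hat e_{1m}\hat e_{1k}f_{1k}f_{1m}v_\la$; a short computation with $[f_{1k},f_{1m}]$ (again from Lemma \ref{basics comms}) then shows this scalar precisely compensates the discrepancy between $f_{1k}f_{1m}$ and $f_{1m}f_{1k}$, reproducing (\ref{1k1m}). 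Once the displayed identity holds, $zv_\la$ pairs to zero with the entire basis $\{v_\la^\star\check e(\kb)\}$, so $zv_\la=0$ for generic $\la$ and $z=0$; the $\hat e$-statement follows by $\omega$, and the case of general $i$ by the first reduction.
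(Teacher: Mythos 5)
Your framing and reductions coincide with the paper's strategy: reduce to $i=1$ via the subalgebra $U_q(\g_{in})$ and the involution $\omega$, show $[\hat f_{1k},\hat f_{1m}]v_\la$ is $\n^+_{2\sms n}$-singular via Proposition \ref{key}, pair it against the dual system $\{v_\la^\star\check e(\kb)\}$ for generic $\la$, and observe that by weight and singularity arguments everything collapses to the single identity $v_\la^\star\hat e_{1\sms m}\hat e_{1\sms k}\hat f_{1\sms k}\hat f_{1\sms m}v_\la=v_\la^\star\hat e_{1\sms m}\hat e_{1\sms k}\hat f_{1\sms m}\hat f_{1\sms k}v_\la$, the right side being (\ref{1k1m}). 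Up to this point your argument is sound and matches the paper. The genuine gap is that you never actually prove this identity, and your proposed route to it is both unjustified and missing the paper's key mechanism. First, the Lemma \ref{unhat}-type conversion does not transfer to the anti-ordered product the way you claim: after replacing $\hat f_{1\sms m}$ by its identity monomial, the residual terms are of the form $\hat f_{1\sms k}\,\phi\,v_\la$ with $\phi\in\n^-_{2\sms m}U$, and to kill them you need something like $\hat f_{1\sms k}\,\n^-_{2\sms m}\subset\n^-_{2\sms n}U$ with $m>k$. This is not a consequence of Lemmas \ref{basics comms} and \ref{nilp} as you assert; the delicate commutators are not only $[\hat f_{1\sms k},f_k]$ and $[\hat f_{1\sms k},f_{k+1}]$ (already $[\hat f_{1\sms k},f_p]\neq0$ for $2\leqslant p<k$, and one must also handle compound vectors $f_{rs}$), and since $\hat f_{1\sms k}$ contains $f_1$, moving elements of $\n^-_{2\sms m}$ across it produces terms that are not manifestly in $\n^-_{2\sms n}U$; for instance $f_1\cdots f_k\,f_{k+1}\equiv f_{1\sms k+1}\not\in\n^-_{2\sms n}U$, so the analogous containment for the other conversion order is outright false, and in your order it would require a fresh Serre-relation analysis that you have not supplied.

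Second, and more importantly, the final assertion that ``a Cartan scalar precisely compensates the discrepancy between $f_{1\sms k}f_{1\sms m}$ and $f_{1\sms m}f_{1\sms k}$'' is exactly the nontrivial content of the proposition, and you give no computation for it. In the paper this is where the real work happens: the anti-ordered coefficient is evaluated by expanding $\hat f_{1\sms k}$ and $\hat e_{1\sms k}$ one step through their recursive definitions, using Proposition \ref{key} to strip off $e_1,f_1$, and then invoking an induction on $k-i$ --- the term $v_\la^\star\hat e_{2\sms m}\hat e_{2\sms k}\hat f_{2\sms m}\hat f_{2\sms k}v_\la$ is only computable because the commutativity $[\hat f_{2\sms k},\hat f_{2\sms m}]=0$ is already known at the previous induction step --- followed by an explicit $q$-number identity (the factors $F_1$, $F_2$) showing agreement with (\ref{1k1m}). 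Your proposal contains no induction and no substitute for this calculation; note also that the Cartan factor $[h_{k+1\sms m}+1]_q$ arising from the conversion of $\hat f_{1\sms m}$ does not commute with the material coming from $\hat f_{1\sms k}$ (this shift is precisely what produces the factor $[\la_{k+1\sms m}+2]_q$ in (\ref{1k1m})), so the ``Cartan scalar'' you invoke is itself order-dependent and must be tracked. As it stands, the heart of the proof is asserted rather than proved.
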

\begin{proof}
It is sufficient to check only  $U_q(\n^-_{in}/\n^-_{i+1\sms n})$, thanks to the Chevalley involution.
This algebra is generated by $\hat f_{ik}$, $k=i,\ldots, n$. To prove the equality
$[\hat f_{ik},\hat f_{im}]=0$, we do induction on  $k-i$, where $k$ is assumed to be less than $m$.

The case $k-i=0$ is already established by Lemma \ref{row_com_1}. For higher $k$ and $m\geqslant 3$, let us
prove that the vector $[\hat f_{ik},\hat f_{im}]v_\la\in M_\la$ is annihilated by $v_\la^\star U_q(\hat \n^+_{1n})$
for all $\la$. It suffices to restrict to $v_\la^\star U_q(\hat \n^+_{in})$, because
$[\hat f_{ik},\hat f_{im}]v_\la\in U_q(\hat \n^-_{in})v_\la$. Therefore, we can assume $i=1$.
Since $v_\la^\star \hat U_q(\n^+_{2\sms n})[\hat f_{1k},\hat f_{1m}]v_\la=0$,
we can restrict to $v_\la^\star U_q(\hat \n^+_{1n}/\hat \n^+_{2n})$.
By weight arguments, it is sufficient to calculate the matrix element
$
v_\la^\star \hat e_{1\sms m}\hat e_{1\sms k}\hat f_{1k}\hat f_{1\sms m}v_\la
$
and check it against
$
v_\la^\star \hat e_{1\sms m}\hat e_{1\sms k}\hat f_{1\sms m}\hat f_{1\sms k}v_\la,
$
which is given in Example \ref{exmp1k1m}.

Plugging the expression $\hat f_{1\sms k}=(f_1\hat f_{2\sms k}[h_{2\sms k}+1]_q-\hat f_{2\sms k}f_1[h_{2\sms k}]_q)$ in
$v_\la^\star \hat e_{1\sms m}\hat e_{1\sms k}\hat e_{1\sms k}\hat f_{1\sms m}v_\la$ we  get $[\la_{2\sms k}+1]_qv_\la^\star \hat e_{1m}\hat e_{1k}f_1\hat f_{2k}\hat f_{1m}v_\la$, since the second term makes zero contribution.
Developing $\hat e_{1\sms k}$ in the similar way we continue to
\be
v_\la^\star \hat e_{1\sms m}\hat e_{1\sms k}\hat e_{1\sms k}\hat f_{1\sms m}v_\la
&=&
[\la_{2\sms k}+1]_qv_\la^\star \hat e_{1\sms m}([h_{2\sms k}+1]_q\hat e_{2\sms k}e_1-[h_{2\sms k}]_q  e_1\hat e_{2\sms k})f_1\hat f_{2\sms k}\hat f_{1\sms m}v_\la.
\label{[k,m]}
\ee
Observe that $h_{2\sms k}$ commutes with $\hat e_{1\sms m}$. The second term gives $-[\la_{2\sms k}+1]_q[\la_{2\sms k}]_q$ times
\be
v_\la^\star \hat e_{1\sms m}e_1f_1\hat e_{2\sms k}\hat f_{2\sms k}\hat f_{1\sms m}v_\la
&=&C_{2k}(\la)\prod_{j=3}^k[\la_{jk}+1]_qv_\la^\star \hat e_{1m}e_1\hat f_{1m}f_1v_\la.
\nn
\ee
In accordance with our
convention, the product is replaced by $1$ if $k=2$.
We have used the fact that $\hat f_{1\sms m}v_\la$ is $\n^+_{2k}$-singular and
$\hat e_{2\sms k}\hat f_{2\sms k}\hat f_{1\sms m}v_\la=\langle v_\la^\star\hat e_{2\sms k},\hat f_{2\sms k}\rangle \hat f_{1\sms m}v_\la$. Also, we have applied Lemma \ref{row_com_1}.
The matrix coefficient
in the right-hand side is standard, and can be specialized from the general formula (\ref{1k1m}).
 The  contribution
of the second term in (\ref{[k,m]}) is
\be
-[\la_{2\sms k}]_q[\la_1]_q[\la_{2\sms m}+2]_q\prod_{j=2}^k[\la_{jk}+1]_q\prod_{j=3}^m[\la_{jm}+1]_qC_{2\sms k}(\la)C_{1\sms m}(\la).
\label{cc2}
\ee
Here we have used $C_{2\sms k}(\la-\al_{1\sms m})=C_{2\sms k}(\la)$, which is true for $k<m$.

The first term  in (\ref{[k,m]})  gives $[\la_{2\sms k}+1]_q^2$ times
$$
v_\la^\star \hat e_{1\sms m}\hat e_{2\sms k}e_1f_1\hat f_{2\sms k}\hat f_{1\sms m}v_\la=[\la_1]_qv_\la^\star \hat e_{1\sms m}\hat e_{2\sms k}\hat f_{2\sms k}\hat f_{1\sms m}v_\la+
v_\la^\star \hat e_{1\sms m}\hat e_{2\sms k}f_1e_1\hat f_{2\sms k}\hat f_{1\sms m}v_\la.
$$
The first matrix coefficient is standard and can be extracted from Theorem \ref{main}. The total contribution of this term to
(\ref{[k,m]}) is
\be
[\la_{2\sms k}+1]_q[\la_1]_q\prod_{j=2}^k[\la_{jk}+1]_q\prod_{j=2}^m[\la_{jm}+1]_qC_{2\sms k}(\la)C_{1\sms m}(\la),
\label{cc11}
\ee
since $C_{2\sms k}(\la-\al_{1\sms m})=C_{2\sms k}(\la)$.
Let us compute the matrix coefficient
$v_\la^\star \hat e_{1\sms m}\hat e_{2\sms k}f_1e_1\hat f_{2\sms k}\hat f_{1\sms m}v_\la
=
v_\la^\star \hat e_{1\sms m}f_1\hat e_{2\sms k}\hat f_{2\sms k}e_1\hat f_{1\sms m}v_\la
$.
With the use of the right equalities from Proposition \ref{key},
we find it equal to
$$
v_\la^\star \hat e_{1\sms m}f_1\hat e_{2\sms k}\hat f_{2\sms k}e_1\hat f_{1\sms m}v_\la
=
[\la_{1\sms m}]_q^2v_\la^\star \hat e_{2\sms m}\hat e_{2\sms k}\hat f_{2\sms  m}\hat f_{2\sms k}v_\la,
$$
by the induction assumption. The total contribution of this term to (\ref{[k,m]})
is
\be
[\la_{2\sms k}+1]_q^2[\la_{1\sms m}]_q^2\prod_{j=3}^k[\la_{j\sms k}+1]_q
\prod_{j=3}^{k}[\la_{j\sms m}+1]_q[\la_{k+1\sms m}+2]_q\prod_{j={k+2}}^m[\la_{j\sms m}+1]_q
C_{2\sms k}(\la)C_{2\sms m}(\la),
\label{cc12}
\ee
where again the convention about the products is in effect.

The matrix coefficient (\ref{[k,m]}) comprises (\ref{cc2}-\ref{cc12}), which contain
the common factor
$F_1= \prod_{j=2}^k[\la_{j\sms k}+1]_q
\prod_{j=3}^k[\la_{j\sms m}+1]_q\prod_{j=k+2}^m[\la_{j\sms m}+1]_qC_{2\sms k}(\la)C_{1\sms m}(\la)$.
Division by $F_1$ gives
$$
[\la_{2\sms k}+1]_q[\la_{1\sms m}]_q[\la_{k+1\sms m}+2]_q
+[\la_1]_q[\la_{k+1\sms m}+1]_q[\la_{2\sms k}+1]_q
[\la_{2\sms m}+1]_q
-[\la_1]_q[\la_{k+1\sms m}+1]_q[\la_{2\sms k}]_q[\la_{2\sms m}+2]_q,
$$
which we denote by $F_2$.
The last two terms  produce
$
[\la_1]_q[\la_{1\sms m}-\la_{1\sms k}]_q[\la_{k+1\sms m}+2]_q
$
since $\la_{2\sms m}-\la_{2\sms m}+1=\la_{k+1\sms m}+2$
and  $\la_{k+1\sms m}+1=\la_{1\sms m}-\la_{1\sms k}$.
Combine this with the first term in $F_2$ having made the replacement $\la_{2\sms k}+1=\la_{1\sms k}-\la_1$.
This gives $F_2=[\la_{k+1\sms m}+2]_q[\la_{1\sms k}]_q[\la_{2\sms m}+1]_q.$
Now one can see that the matrix coefficient (\ref{[k,m]}), which is equal to $F_1F_2$, is identical to the matrix coefficient
from Example \ref{exmp1k1m}. This completes the proof.
\end{proof}

In conclusion, let us turn to the contravariant Shapovalov form $M_\la$ defined through
the Chevalley involution $\omega$.
Consider the linear isomorphism $\theta\colon M_\la \to M_\la^\star$, $\theta\colon uv_\la\mapsto v_\la^\star\omega(u)$,
where $u\in U_q(\n^-)$. Obviously, $\theta(xv)=\theta(v)\omega(x)$ for all $x\in U_q(\g)$ and $v\in M_\la$. The contravariant form
 is defined on $M_\la$ through the composition $M_\la\tp M_\la\stackrel{\theta \tp\id}{\longrightarrow} M_\la^\star\tp M_\la \to \C$,
 where the right arrow is the cyclic Shapovalov pairing.

\begin{corollary}
The system $\frac{1}{\sqrt{B_{\lb}(\la)}}\hat f(\lb)v_\la$, $\lb\in \Tg$, forms an orthonormal basis
with respect to the contravariant form on
the Verma module $M_\la$, provided $B_{\lb}(\la)\not=0, \forall\lb\in \Tg$.
\end{corollary}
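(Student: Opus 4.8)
The plan is to reduce the contravariant form to the cyclic Shapovalov pairing already evaluated in Theorem \ref{main}; the present statement is just a refinement of the preceding corollary. By the definition of the contravariant form through $\theta\tp\id$, for $\lb,\mb\in\Tg$ its value on $\hat f(\lb)v_\la$ and $\hat f(\mb)v_\la$ is
$$
\bigl\langle\theta(\hat f(\lb)v_\la),\,\hat f(\mb)v_\la\bigr\rangle
=v_\la^\star\,\omega\bigl(\hat f(\lb)\bigr)\,\hat f(\mb)v_\la ,
$$
a matrix coefficient of the cyclic pairing $M_\la^\star\tp M_\la\to\C$. So the whole task is to recognize $\omega(\hat f(\lb))$ as one of the positive dynamical PBW monomials handled by Theorem \ref{main}.

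First I would compute $\omega(\hat f(\lb))$. Since $\omega$ is an anti-algebra automorphism with $\omega(\hat f_{ij})=\hat e_{ij}$, applying it to $\hat f(\lb)=\hat f(\lb_n)\cdots\hat f(\lb_1)$ reverses the order of the rows (so that afterwards they read $1,\dots,n$ from the left) and, inside each $\hat f(\lb_k)=\hat f_{kn}^{l_{kn}}\cdots\hat f_{kk}^{l_{kk}}$, reverses the order of the simple factors. The outcome is exactly the normally ordered positive dynamical monomial, $\omega(\hat f(\lb))=\hat e(\lb)$. Next I would invoke row-wise commutativity: by Proposition \ref{row-commute} each algebra $U_q(\hat\n^+_{kn}/\hat\n^+_{k+1\sms n})$ is commutative, hence the simple factors inside a given row commute and $\hat e(\lb_k)=\check e(\lb_k)$ for every $k$, so $\hat e(\lb)=\check e(\lb)$ for all $\lb\in\Tg$. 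Consequently the contravariant form on $\hat f(\lb)v_\la$ and $\hat f(\mb)v_\la$ equals $v_\la^\star\check e(\lb)\hat f(\mb)v_\la$, which by Theorem \ref{main} is $\dt_{\lb,\mb}B_\lb(\la)$.

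Finally, under the hypothesis $B_\lb(\la)\neq0$ for all $\lb\in\Tg$, the corollary following Theorem \ref{main} already gives that $\{\hat f(\lb)v_\la\}_{\lb\in\Tg}$ is a basis of $M_\la$; rescaling each vector by $1/\sqrt{B_\lb(\la)}$ turns the diagonal Gram matrix just computed into the identity, yielding the claimed orthonormal basis. The single genuinely non-trivial ingredient is Proposition \ref{row-commute} (which identifies $\hat e(\lb)$ with $\check e(\lb)$); granted that result, which is established in this section, the argument is exactly the bookkeeping described above and no further obstacle arises.
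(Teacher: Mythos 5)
Your argument is correct and is essentially the paper's own proof: the paper likewise notes that $\theta\bigl(\hat f(\lb)v_\la\bigr)=v_\la^\star\hat e(\lb)=v_\la^\star\check e(\lb)$ (the second equality by the row-wise commutativity of Proposition \ref{row-commute}) and then applies Theorem \ref{main} together with the rescaling by $1/\sqrt{B_{\lb}(\la)}$. You have merely spelled out the bookkeeping that the paper leaves implicit.
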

\begin{proof}
Follows from Theorem \ref{main} and Proposition \ref{row-commute}, since
 $\theta\bigl(\hat f(\lb)v_\la\bigr)=v_\la^\star \hat e(\lb)=v_\la^\star \check{e}(\lb)$.
\end{proof}

\section{The case of $\g=\s\l(3)$}
We illustrate Theorem \ref{main} on the simple example of $\g=\s\l(3)$ reproducing the key steps of the
calculations.  Now $\n^-=\Span\{f_1,f_2,f_{12}\}$ and $\n^+=\Span\{e_1,e_2,e_{12}\}$,
where $f_{12}=f_1f_2-q f_2f_1$ and $e_{12}=e_2e_1-q e_1e_2$.
The dynamical root vectors $\hat e_{12}$ and $\hat f_{12}$ are
$\hat e_{12}=[h_2+1]_qe_2e_1-[h_2]_q e_1e_2$ and $\hat f_{12}=f_1f_2[h_2+1]_q- f_2f_1[h_2]_q$.
For all $l,m \in \Z_+$ the vector $\hat f_{12}^lf_1^mv_\la \in M_\la$ is annihilated by $e_2$, and
similarly $v_\la^\star \hat e_{12}^l e_1^m \in M_\la^\star  $ is annihilated by $f_2$.
This readily implies
$$
v_\la^\star  e_1^s \hat e_{12}^r  e_2^pf_2^k\hat f_{12}^lf_1^mv_\la
=\dt_{pk}[k]_q!\prod_{i=0}^{k-1}[\la_2-i+m-l]_q
v_\la^\star \hat e_{12}^r e_1^s \hat f_{12}^lf_1^mv_\la
$$
(we use $e_1 \hat e_{12}= \hat e_{12}e_1$, by Lemma \ref{row_com_1}).
The matrix coefficient in the right-hand side is not zero only if
 $r(\al_1+\al_2)+s\al_1=l(\al_1+\al_2)+m\al_1$ or, equivalently,
$r=l$, $s=m$, in accordance with Proposition \ref{diagonal}.

In the matrix coefficient
$
v_\la^\star \hat e_{12}^l e_1^m \hat f_{12}^lf_1^mv_\la
$,
every factor $\hat f_{12}=[f_1,f_2]_q[h_2+1]_q+ f_2f_1q^{h_2+1}$
can be replaced with $[f_1,f_2]_q[h_2+1]_q$. This specialization of Lemma \ref{unhat} becomes immediate due to
the fact that $f_2f_1$ commutes with $f_1f_2$ and can be pushed to the left, where
$f_2$ kills $v_\la^\star \hat e_{12}^l e_1^m$.
This yields
$$
v_\la^\star  e_1^m \hat e_{12}^l\hat f_{12}^lf_1^mv_\la=
v_\la^\star \hat e_{12}^l e_1^m \hat f_{12}^lf_1^mv_\la=
\prod_{i=0}^{l-1}[\la_2-i+m+1]_q
v_\la^\star \hat e_{12}^l e_1^m f_{12}^lf_1^mv_\la.
$$
Pushing every copy of $e_1$ to the right produces zero contribution of the commutator $[e_1,f_{12}^l]$, as
the latter belongs to $f_2U$, as in Lemma \ref{in_row_reduction}. In the present case, this is a consequence of the commutation relations
 $[e_1,f_{12}]= f_2q^{-h_i}$ and $[f_2,f_{12}]_q=0$, see Lemma \ref{basics comms}.
This yields the factorization
$$
v_\la^\star \hat e_{12}^l e_1^m f_{12}^lf_1^mv_\la=
v_\la^\star \hat e_{12}^l f_{12}^l e_1^m f_1^mv_\la
=
[m]_q!\prod_{i=0}^{m-1}[\la_1-i]_qv_\la^\star \hat e_{12}^l f_{12}^lv_\la,
$$
as in Lemma \ref{in_row_reduction}.
Combining the above steps with the value of the matrix coefficient $v_\la^\star \hat e_{12}^l f_{12}^lv_\la=[l_q]!\prod_{i=0}^{l-1}[\la_{1\sms 2}-i]_q[\la_{2}-i]_q$ given by (\ref{elementary}) (in this simple case it can be easily computed directly)
  we get for $B_{\lb}(\la)=v_\la^\star e_1^m\hat e_{12}^l  e_2^kf_2^k\hat f_{12}^lf_1^mv_\la$ the formula
$$
B_{\lb}(\la)=[l]_q![m]_q![k]_q!
\prod_{i=0}^{k-1}[\la_2-i+m-l]_q
\prod_{i=0}^{l-1}[\la_2-i+m+1]_q\prod_{i=0}^{l-1}[\la_2-i]_q
\prod_{i=0}^{l-1}[\la_{12}-i]_q\prod_{i=0}^{m-1}[\la_1-i]_q,
$$
where $m=l_{11}$, $l=l_{12}$, and $k=l_{22}$.

In the standard basis, the inverse of Shapovalov form  is known to have entries with simple poles, \cite{O,ESt}.
Examining $B_{\lb}(\la)$ suggests the presence of second order zeros at $\la_2=0,\ldots, \min\{l-1,-m+k+l-2\}$, provided $l$
and $-m+k+l-1$ are positive. This example shows that the singularities of the form inverse
 are not necessarily simple
in the basis $\hat f(\lb),\hat e(\kb)$.

Consider the classical limit $q\to 1$.  The set of zeros of $B_{\lb}(\la)$ over all $\lb\in \Tg$ is the union of hyperplanes $\la_1\in \Z_+$, $\la_{12}\in \Z_+$, and $\la_2\in \Z$.
At the points $\la_2\in -\N$ the form is still invertible,  therefore the system $f_2^k \hat f_{12}^l f_1^mv_\la$ fails to be a basis.
Consider the automorphism of $U(\s\l(3))$ corresponding to the inversion $\al_1\leftrightarrow \al_2$
 of the Dynkin diagram. This automorphism produces an alternative system of dynamical roots,
with $\hat e_{12}=(h_1+1) e_1 e_2-h_1 e_2 e_1$ and
$\hat f_{12}=f_2 f_1(h_1+1) -f_1 f_2h_1$.
With the reversed ordering on thus defined root vectors, we obtain a dynamical PBW system yielding a basis in
 $M_\la^\star$ and $M_\la$, provided
$\la_1\not\in \Z$ and $\la_2,\la_{12}\not\in \Z_+$. One or another  system is
a basis for $\la_1,\la_2,\la_{12}\not \in \Z_+$, i.e. exactly where the Shapovalov form is non-degenerate.

\section{Singular vectors in $M_\la$.}
In this final section we use the dynamical PBW basis to construct singular vectors in $M_\la$.
\begin{lemma}
\label{Chevalley_basis}
Suppose $\phi_1, \phi_2\in U_q(\g_-)$ are non-zero elements of weight $-\bt \in -\Rm^+$ such that
$[e_\al,\phi]=0$ and $(\bt,\al)\not =0$ for some $\al \in \Pi^+$.
Then the vectors $f_{\al}\phi_1, \phi_2 f_{\al}\in U_q(\g_-)$ are linearly independent.
\end{lemma}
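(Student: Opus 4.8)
The plan is to argue by contradiction. Suppose there were a non-trivial relation $a\, f_\al\phi_1+b\, \phi_2 f_\al=0$ in $U_q(\g_-)$ with $(a,b)\in\C^2\setminus\{0\}$. The first move is to apply the operation $[e_\al,-\,]$ to this relation. It obeys the Leibniz identity for commutators, and by hypothesis $e_\al$ commutes with both $\phi_1$ and $\phi_2$, while $[e_\al,f_\al]=[h_\al]_q=\frac{t_\al-t_\al^{-1}}{q-q^{-1}}$. Hence $[e_\al,f_\al\phi_1]=[h_\al]_q\phi_1$ and $[e_\al,\phi_2 f_\al]=\phi_2[h_\al]_q$, so the relation turns into $a\,[h_\al]_q\phi_1+b\,\phi_2[h_\al]_q=0$ inside $U_q(\b^-)$.

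Next I would rewrite this identity in the normal form of the triangular decomposition $U_q(\b^-)=U_q(\g_-)U_q(\h)$. Since $\phi_1$ has weight $-\bt$, the Chevalley relations give $t_\al^{\pm1}\phi_1=q^{\mp(\bt,\al)}\phi_1 t_\al^{\pm1}$, so that $(t_\al-t_\al^{-1})\phi_1=\phi_1(q^{-(\bt,\al)}t_\al-q^{(\bt,\al)}t_\al^{-1})$. Substituting and collecting the coefficients of $t_\al$ and $t_\al^{-1}$, the relation (after clearing the denominator $q-q^{-1}$) becomes $(q^{-(\bt,\al)}a\,\phi_1+b\,\phi_2)\,t_\al-(q^{(\bt,\al)}a\,\phi_1+b\,\phi_2)\,t_\al^{-1}=0$. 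The distinct monomials $t_\al$ and $t_\al^{-1}$ of $U_q(\h)$ are part of a $U_q(\g_-)$-basis of $U_q(\b^-)$ by the PBW theorem recalled above, hence linearly independent over $U_q(\g_-)$, so both coefficients vanish: $q^{-(\bt,\al)}a\,\phi_1+b\,\phi_2=0$ and $q^{(\bt,\al)}a\,\phi_1+b\,\phi_2=0$. Subtracting these yields $a\,(q^{-(\bt,\al)}-q^{(\bt,\al)})\,\phi_1=0$; since $(\bt,\al)\neq0$ and $q$ is not a root of unity the scalar factor is non-zero, and $\phi_1\neq0$, whence $a=0$; then $b\,\phi_2=0$ with $\phi_2\neq0$ forces $b=0$, contradicting $(a,b)\neq(0,0)$.

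I do not foresee a serious obstacle. The only points that need care are the sign convention $t_\al\phi_i t_\al^{-1}=q^{-(\bt,\al)}\phi_i$ forced by the Chevalley relations and the bookkeeping when moving $t_\al-t_\al^{-1}$ past $\phi_1$; the required linear independence of $t_\al,t_\al^{-1}$ over $U_q(\g_-)$ is immediate from the triangular/PBW decomposition. The classical algebra $U(\g)$ is treated by the identical computation, with $[h_\al]_q$ replaced by $h_\al$ and the $q$-commutation of $t_\al^{\pm1}$ by the shift $h_\al\phi_1=\phi_1(h_\al-(\bt,\al))$; alternatively, in either setting one may evaluate the relation $a\,[h_\al]_q\phi_1+b\,\phi_2[h_\al]_q=0$ on the canonical generator $v_\la$ of a Verma module $M_\la$ of generic highest weight and, combining with the original relation, read off $a=b=0$ from the fact that $[h_\al]_q$ acts on $\phi_1 v_\la$ and on $v_\la$ through the distinct scalars $[(\la-\bt,\al)]_q$ and $[(\la,\al)]_q$.
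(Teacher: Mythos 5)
Your proof is correct, but it follows a genuinely different route from the paper. The paper argues through the Shapovalov form on Verma modules: it evaluates the elements on a generic highest weight vector, computes (after a Gram--Schmidt reduction and a split into the collinear and non-collinear cases for $\phi_1,\phi_2$) the Gram determinant of the vectors $f_\al\phi_i v_\la,\ \phi_i f_\al v_\la$, and shows it cannot vanish identically in $\la$ because $[(\la-\bt-m\al,\al)]_q\neq[(\la,\al)]_q$; linear independence in $U_q(\g_-)$ is then deduced from independence in $M_\la$ for generic $\la$. You instead work directly inside $U_q(\b^-)$: applying $[e_\al,\,\cdot\,]$ to a hypothetical relation $a\,f_\al\phi_1+b\,\phi_2 f_\al=0$, using $[e_\al,f_\al]=[h_\al]_q$ and the hypothesis $[e_\al,\phi_i]=0$ (which is indeed how the paper uses its hypothesis, for both $\phi_1$ and $\phi_2$), commuting $t_\al^{\pm1}$ through the weight-$(-\bt)$ element $\phi_1$, and separating the coefficients of the distinct PBW basis elements $t_\al$ and $t_\al^{-1}$ over $U_q(\n^-)$; the hypotheses $(\bt,\al)\neq0$ (so $(\bt,\al)\in\Z\setminus\{0\}$) and $q$ not a root of unity give $q^{2(\bt,\al)}\neq1$, forcing $a=b=0$. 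Your argument is more elementary and in some respects cleaner: it needs no genericity of $\la$, no nondegeneracy or explicit norms for the contravariant form, and no case distinction on whether $\phi_1$ and $\phi_2$ are collinear, whereas the paper's method has the advantage of staying within the representation-theoretic framework (contravariant forms on $M_\la$) that the rest of the section relies on; your closing remark about evaluating on a generic Verma module is essentially the paper's strategy in miniature.
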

\begin{proof}
The root $\bt$ is a sum $\al_i+\ldots + \al_j$ for some $i<j$. Then $\al$ is either $\al_{i-1}$ or
$\al_{j+1}$. Assume that $\al=\al_{i-1}$ as the other case is similar.
By the PBW property, $f_{im}$, $m=i,\ldots,j$ are independent 
over  $U_q(\n_{i+1,j}^-)$ with respect to the right multiplication.
We can write $\phi_k=\sum_{m=i}^{j}f_{im}a_{k}^m$, $k=1,2$, where $a_{k}^m\in  U_q(\n_{i+1,j}^-)$. 
If $x,y\in \C$ are such that $x f_\al\phi_1=y\phi_2f_\al$, then
$$
x f_\al\sum_{m=i}^{j}f_{im}a_{1}^m =y\sum_{m=i}^{j}f_{im}a_{2}^mf_\al =y \sum_{m=i}^{j}q^{-1}f_\al f_{im}a_{2}^m
-y \sum_{m=i}^{j}q^{-1} f_{i-1,m}a_{2}^m,
$$
and $y q^{-1} a_{2}^m=0$, $x a_{1}^m =y q^{-1}a_{2}^m$ for all $m=i,\ldots, j$. Since $\phi_i\not =0$, this gives
$x=y=0$.
\end{proof}

\begin{corollary}
\label{dyn_not0}
For all $\al \in R^+$, the element  $\hat{f}_\al (\la)\in U_q(\g_-)$ is not vanishing at all $\la$.
\end{corollary}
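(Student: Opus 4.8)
The plan is to prove the statement by induction on the length of the root $\al$, reducing the inductive step to Lemma~\ref{Chevalley_basis}. Write $\al=\al_{ij}=\al_i+\ldots+\al_j$, and let $\hat f_{ij}(\la)\in U_q(\g_-)$ be the element characterized by $\hat f_{ij}v_\la=\hat f_{ij}(\la)v_\la$ in $M_\la$; since $M_\la$ is free over $U_q(\n^-)$ this element is well defined, it coincides with $\hat f_\al(\la)$, and it is nonzero precisely when $\hat f_{ij}v_\la\neq0$. First I would dispose of the base case: for $\al$ simple, $\hat f_\al(\la)=f_i\neq0$ for every $\la$.

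For the inductive step I would apply the defining recursion for $\hat f_{ij}$ to the highest weight vector $v_\la$. The rightmost Cartan factors $[h_{i+1\sms j}+1]_q$ and $[h_{i+1\sms j}]_q$ act on $v_\la$ as the scalars $[\la_{i+1\sms j}+1]_q$ and $[\la_{i+1\sms j}]_q$, while in the second summand $\hat f_{i+1\sms j}$ is applied to $f_iv_\la$, a vector of weight $\la-\al_i$; using again that $M_\la$ is free over $U_q(\n^-)$, this should give the identity in $U_q(\g_-)$
\[
\hat f_{ij}(\la)=[\la_{i+1\sms j}+1]_q\, f_i\,\hat f_{i+1\sms j}(\la)\;-\;[\la_{i+1\sms j}]_q\,\hat f_{i+1\sms j}(\la-\al_i)\,f_i .
\]
Then I would invoke Lemma~\ref{Chevalley_basis} with the simple root there equal to $\al_i$ and with $\phi_1=\hat f_{i+1\sms j}(\la)$, $\phi_2=\hat f_{i+1\sms j}(\la-\al_i)$. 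Both are nonzero by the induction hypothesis; both have weight $-\al_{i+1\sms j}\in-\N\Pi^+$; both commute with $e_i$, since after specialization they are polynomials in $f_{i+1},\ldots,f_j$ and each $f_m$ with $m\geqslant i+1$ commutes with $e_i$; and $(\al_{i+1\sms j},\al_i)=(\al_{i+1},\al_i)=-1\neq0$. The lemma then yields that $f_i\phi_1$ and $\phi_2 f_i$ are linearly independent in $U_q(\g_-)$. Since $q$ is not a root of unity, $[\la_{i+1\sms j}+1]_q$ and $[\la_{i+1\sms j}]_q$ cannot vanish simultaneously (that would force $q^2=1$), so the right-hand side of the displayed identity is a nontrivial linear combination of two linearly independent elements; hence $\hat f_\al(\la)\neq0$, and the induction closes.

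I expect the only delicate point to be the bookkeeping in the recursion: one must see that the $f_i$ standing on the right forces the inner dynamical root vector in the second summand to be specialized at the shifted weight $\la-\al_i$ rather than at $\la$ — these specializations genuinely differ, since $(\al_i,\al_{i+1})\neq0$ — and one must verify that Lemma~\ref{Chevalley_basis} applies verbatim to both $\phi_1$ and $\phi_2$. Once one observes that a specialized dynamical root vector lies in $U_q(\n^-)$ and therefore commutes with $e_i$, there is no further obstacle; the classical case is identical, reading $[\,\cdot\,]_q$ as the identity function and using that two consecutive integers are never both zero. Applying the Chevalley involution $\omega$ one obtains in the same way that $\hat e_\al(\la)\neq0$ for all $\la$.
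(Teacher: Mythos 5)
Your proposal is correct and follows essentially the same route as the paper: induction on the length of $\al$, the recursion $\hat f_{ij}=f_i\hat f_{i+1\sms j}[h_{i+1\sms j}+1]_q-\hat f_{i+1\sms j}f_i[h_{i+1\sms j}]_q$ applied to $v_\la$, Lemma~\ref{Chevalley_basis} with $\phi_1,\phi_2$ the specializations of $\hat f_{i+1\sms j}$, and the observation that two consecutive $q$-integers cannot vanish simultaneously since $q^2\neq1$. Your bookkeeping of the shifted specialization $\hat f_{i+1\sms j}(\la-\al_i)$ in the second summand is in fact slightly more careful than the paper's wording, and is exactly what the two-element formulation of Lemma~\ref{Chevalley_basis} is designed to absorb.
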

\begin{proof}
We do induction on $\deg \hat{f}_\al$.
For $\deg \hat{f}_\al=1$ the statement is obvious. Suppose that $\al$ is presentable as $\al=\al_i+\bt$, where $\al_i\in \Pi^+$ and $\bt \in R^+$. Suppose we have proved that $\hat{f}_\bt(\la)\not = 0$ for some $\la$. Then the non-zero vectors $f_{\al_i}\hat{f}_\bt v_\la=f_{\al_i}\hat{f}_\bt(\la) v_\la$ and
$\hat{f}_\bt f_{\al_i} v_\la =\hat{f}_\bt(\la) f_{\al_i} v_\la $ are independent, by Lemma \ref{Chevalley_basis}.
Therefore, $\hat{f}_\al v_\la=0$ if and only if
$q^{2(\la+\rho,\bt)}=1=q^{2(\la+\rho,\bt)-2}$, which is impossible since $q^2\not=1$.
\end{proof}

The standard higher root vectors $f_{ij}\in U_q(\g)$ are known to satisfy  the identity
$
f_1 f_{2\sms n}^2=[2]_qf_{2\sms n}f_{1\sms n}-f_{2\sms n}^2f_1=0
$,
which easily follows from the Serre relations. Further we need its dynamical version.
\begin{lemma}
\label{Dyn_h_Serre}
One has
$
f_1\hat f_{2\sms n}^2-[2]_q\hat f_{2\sms n}
f_1\hat f_{2\sms n}+\hat f_{2\sms n}^2f_1=0
$.
\end{lemma}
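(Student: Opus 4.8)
\emph{Proof strategy.} The plan is to prove the equivalent identity $Z:=f_1\hat f_{2\sms n}^2-[2]_q\hat f_{2\sms n}f_1\hat f_{2\sms n}+\hat f_{2\sms n}^2f_1=0$ by testing $Z$ against generic Verma modules. Note $Z\in U_q(\b^-)$ is homogeneous of weight $-(\al_1+2\al_{2\sms n})$; writing $Z=\sum_s u_sx_s$ with $\{u_s\}$ a basis of the weight subspace $U_q(\n^-)_{-(\al_1+2\al_{2\sms n})}$ and $x_s\in U_q(\h)$, one has $Zv_\la=\sum_s\la(x_s)\,u_sv_\la$, where the $u_sv_\la$ are linearly independent in $M_\la$ for every $\la$ and each $\la\mapsto\la(x_s)$ is a Laurent polynomial in the $q^{\la_i}$. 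Hence it suffices to prove $Zv_\la=0$ for generic $\la$: then $x_s=0$ for all $s$, i.e. $Z=0$. For generic $\la$ the cyclic Shapovalov pairing is non-degenerate and $\{v_\la^\star\hat e(\kb)\}_{\kb\in\Tg}$ is a basis of $M_\la^\star$ (Theorem \ref{main} and the corollary following it), so it is enough to check $\langle v_\la^\star\hat e(\kb),Zv_\la\rangle=0$ for every $\kb$; by the weight grading only arrays with $\wt\,\hat e(\kb)=\al_1+2\al_{2\sms n}$ can contribute.

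Next I would cut this list down using singularity. Since $\hat f_{2\sms n}$ is built from $f_2,\dots,f_n$ and $h_3,\dots,h_n$, it commutes with $e_1$; moreover $\hat f_{2\sms n}\in U^{e_k}$ for $k\geqslant 3$ by Proposition \ref{key}, while $f_1\in U^{e_k}$ for $k\geqslant 3$ trivially, and $U^{e_k}$ is a subalgebra, so $Z\in U^{e_k}$ for $k\geqslant 3$. Also $[e_1,Z]=0$: since $[e_1,\hat f_{2\sms n}]=0$, $\hat f_{2\sms n}$ has $h_1$-weight $1$ and $[e_1,f_1]=[h_1]_q$, one gets $[e_1,Z]=\hat f_{2\sms n}^2\bigl([h_1+2]_q+[h_1]_q-[2]_q[h_1+1]_q\bigr)=0$. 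Consequently $Zv_\la$ is annihilated by $e_1$ and by $\n^+_{3\sms n}$. Splitting $\hat e(\kb)=\hat e(\kb_1)\hat e(\kb_2)\,Y$ with $Y=\hat e(\kb_3)\cdots\hat e(\kb_n)\in U_q(\hat\n^+_{3\sms n})$ and using cyclicity, $\langle v_\la^\star\hat e(\kb),Zv_\la\rangle=\langle v_\la^\star\hat e(\kb_1)\hat e(\kb_2),\,YZv_\la\rangle$; since every $\hat e_{lk}$ with $l\geqslant 3$ has positive degree in the $e_i$ and $Zv_\la$ is $\g_{3\sms n}$-singular, $YZv_\la=0$ as soon as $Y\neq 1$. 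Combined with the weight condition, a direct count of the multiplicities of the simple roots in $\sum l_{ij}\al_{ij}=\al_1+2\al_{2\sms n}$ shows that the only surviving arrays are those with $l_{11}=1,\ l_{2n}=2$, so $\hat e(\kb)=e_1\hat e_{2\sms n}^2$, and those with $l_{1n}=l_{2n}=1$, so $\hat e(\kb)=\hat e_{1\sms n}\hat e_{2\sms n}$.

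It then remains to verify $\langle v_\la^\star e_1\hat e_{2\sms n}^2,Zv_\la\rangle=0$ and $\langle v_\la^\star\hat e_{1\sms n}\hat e_{2\sms n},Zv_\la\rangle=0$. Expanding $Zv_\la$ into its three summands $f_1\hat f_{2\sms n}^2v_\la$, $-[2]_q\hat f_{2\sms n}f_1\hat f_{2\sms n}v_\la$, $\hat f_{2\sms n}^2f_1v_\la$, and using $[e_1,\hat f_{2\sms n}]=0=[f_1,\hat e_{2\sms n}]$ (the latter by applying $\omega$ to the former), the relation $[\hat e_{1\sms n},f_1]\equiv[h_{1\sms n}]_q\hat e_{2\sms n}$ modulo $f_1U$ from Proposition \ref{key}, $v_\la^\star e_1 f_1=[\la_1]_qv_\la^\star$, and — for the middle summand — the expansion $[f_1,\hat f_{2\sms n}]_q=f_{12}\hat f_{3\sms n}[h_{3\sms n}+1]_q-\hat f_{3\sms n}f_{12}[h_{3\sms n}]_q$, each of the six pairings is brought to a product of a factor in $U_q(\h)$ with a pure $U_q(\g_{2\sms n})$-matrix coefficient, which is evaluated by Lemma \ref{lemma_elementary}, Lemma \ref{unhat}, Theorem \ref{main} and Lemma \ref{basics comms}. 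The $[2]_q$-weighted alternating sum of the three resulting scalars is then checked to vanish identically in the Cartan variables. I expect this last step — carrying the Cartan shifts generated when $f_1$ is commuted through $\hat f_{2\sms n}$, and confirming the three-term cancellation — to be the only genuine obstacle; everything upstream is structural. (A self-contained but more laborious alternative is a direct induction on $n$ from the recursion $\hat f_{2\sms n}=f_2\hat f_{3\sms n}[h_{3\sms n}+1]_q-\hat f_{3\sms n}f_2[h_{3\sms n}]_q$, with base case $n=2$ the Serre relation and inductive step using Lemma \ref{basics comms}.)
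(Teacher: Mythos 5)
Your structural reduction is correct and is essentially the reduction the paper itself makes: work on a generic Verma module, use $\g_{3\sms n}$-singularity and a weight count to see that only the two covectors $v_\la^\star e_1\hat e_{2\sms n}^2$ and $v_\la^\star\hat e_{1\sms n}\hat e_{2\sms n}$ can pair nontrivially with $Zv_\la$, and recover the algebra identity from its validity at generic $\la$ (your claims $[e_1,\hat f_{2\sms n}]=0$, $[e_1,Z]=0$, $Z\in U^{e_k}$ for $k\geqslant 3$, and the passage from ``$Zv_\la=0$ for generic $\la$'' to $Z=0$ are all sound). The genuine gap is that you stop exactly where the lemma is decided: the two vanishings $v_\la^\star e_1\hat e_{2\sms n}^2\,Zv_\la=0$ and $v_\la^\star\hat e_{1\sms n}\hat e_{2\sms n}\,Zv_\la=0$ are asserted (``checked to vanish identically in the Cartan variables''), not proved, and they are not instances of the results you invoke. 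In particular the middle summand produces pairings such as $v_\la^\star\hat e_{1\sms n}\hat e_{2\sms n}\,\hat f_{2\sms n}f_1\hat f_{2\sms n}v_\la$, whose negative part is not an ordered dynamical PBW monomial, so Theorem \ref{main} does not evaluate it; your (correct) expansion of $[f_1,\hat f_{2\sms n}]_q$ only begins that computation. Since everything upstream is soft, this unexecuted cancellation is the entire content of the lemma, so as written the proposal is a proof plan rather than a proof.

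What the paper does differently, and what closes the gap almost immediately, is an Ansatz with undetermined coefficients instead of a term-by-term cancellation. Since $f_1\hat f_{2\sms n}^2v_\la$ is $\g_{3\sms n}$-singular, it lies in the span of the dynamical PBW vectors of its weight built from rows $1$ and $2$, i.e. $f_1\hat f_{2\sms n}^2v_\la=A\,\hat f_{2\sms n}^2f_1v_\la+B\,\hat f_{2\sms n}\hat f_{1\sms n}v_\la$ --- exactly the two arrays your weight count produced. Pairing with $v_\la^\star\hat e_1\hat e_{2\sms n}^2$ and $v_\la^\star\hat e_{1\sms n}\hat e_{2\sms n}$, orthogonality (Proposition \ref{diagonal}, Theorem \ref{main}) leaves a single unknown in each equation; using $[f_1,\hat e_{2\sms n}]=0$ and Proposition \ref{key}, the left-hand sides become $[\la_1]_q$, resp.\ $[\la_{1\sms n}]_q$, times $v_\la^\star\hat e_{2\sms n}^2\hat f_{2\sms n}^2v_\la$, and comparison with the known diagonal coefficients gives $A=[\la_{2\sms n}-1]_q/[\la_{2\sms n}+1]_q$, $B=[2]_q/[\la_{2\sms n}+1]_q$, which is the stated identity up to an invertible Cartan factor once $\hat f_{1\sms n}$ is rewritten through $f_1$ and $\hat f_{2\sms n}$. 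No pairing of a non-ordered monomial ever appears. If you keep your three-term formulation you must actually carry out and exhibit the six evaluations and their cancellation; adopting the Ansatz makes your structural work immediately conclusive.
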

\begin{proof}
We prove an equivalent identity $
f_1\hat f_{2\sms n}^2[h_{2\sms n}+1]_q=\hat f_{2\sms n}^2f_1[h_{2\sms n}-1]_q+[2]_q\hat f_{2\sms n}\hat f_{1\sms n}
$, whose right-hand side involves ordered PBW monomials.
 It is clear that $f_1\hat f_{2\sms n}^2v_\la$ is singular with respect to $\g_{3\sms n}$.
Therefore, it is a linear combination of PBW monomials in $\hat f_{2\sms 2},\ldots \hat f_{2\sms n},\hat f_{1\sms 2},\ldots, \hat f_{1\sms n}$ applied to $v_\la$. By weight arguments, we can write
$f_1\hat f_{2\sms n}^2v_\la=A\hat f_{2\sms n}^2f_1v_\la+B\hat f_{2\sms n}\hat f_{1\sms n}v_\la$ for some
scalars $A$, $B$. Pairing this equality with
$v_\la^\star \hat e_{1}\hat e_{2\sms n}^2$ and $v_\la^\star \hat e_{1\sms n}\hat e_{2\sms n}$ we get
$$
[\la_1]v_\la^\star \hat e_{2\sms n}^2 \hat f_{2\sms n}^2v_\la=Av_\la^\star \hat e_{1}\hat e_{2\sms n}^2 \hat f_{2\sms n}^2f_1v_\la,
\quad
[\la_{1\sms n}]_qv_\la^\star \hat e_{2\sms n}^2\hat f_{2\sms n}^2v_\la=Bv_\la^\star \hat e_{1\sms n}\hat e_{2\sms n} \hat f_{2\sms n}\hat f_{1\sms n}v_\la,
$$
where we have used Proposition \ref{key} in the right equality.
Comparison of the matrix coefficients yields
$
A=\frac{[\la_{2\sms n}-1]_q}{[\la_{2\sms n}+1]_q}
$
and
$
B= \frac{[2]_q}{[\la_{2\sms n}+1]_q}
$,
as required.
\end{proof}
\begin{corollary}
\label{[f_2n,f_1n]}
Put
$
\bar f_{1\sms n}=f_1\hat f_{2\sms n}[h_{2\sms n}+2]_q-\hat f_{2\sms n}f_1[h_{2n}+1]_q
$. Then
$
\bar f_{1\sms n}\hat f_{2\sms n}=\hat f_{2\sms n}\hat f_{1\sms n}
$.
\end{corollary}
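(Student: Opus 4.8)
\emph{Proof plan.} The plan is to expand $\bar f_{1\sms n}\hat f_{2\sms n}$, move the Cartan factors to the right past $\hat f_{2\sms n}$, and then apply the dynamical Serre relation of Lemma~\ref{Dyn_h_Serre} together with an elementary $q$-number identity so that the outcome reorganizes into $\hat f_{2\sms n}\hat f_{1\sms n}$. The first ingredient is the commutation of $h_{2\sms n}$ past $\hat f_{2\sms n}$: since $\hat f_{2\sms n}$ transforms with weight $-\al_{2\sms n}$ and $(\al_{2\sms n},\al_{2\sms n})=(\al_i,\al_i)=2$, one has $q^{h_{2\sms n}}\hat f_{2\sms n}=\hat f_{2\sms n}q^{h_{2\sms n}-2}$, hence $[h_{2\sms n}+c]_q\,\hat f_{2\sms n}=\hat f_{2\sms n}\,[h_{2\sms n}+c-2]_q$ for every scalar $c$. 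Substituting the definition of $\bar f_{1\sms n}$, multiplying on the right by $\hat f_{2\sms n}$, and applying this rule to both summands gives
$$
\bar f_{1\sms n}\hat f_{2\sms n}=f_1\hat f_{2\sms n}^2[h_{2\sms n}]_q-\hat f_{2\sms n}f_1\hat f_{2\sms n}[h_{2\sms n}-1]_q.
$$

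Next I would use Lemma~\ref{Dyn_h_Serre} in the equivalent form $f_1\hat f_{2\sms n}^2=[2]_q\hat f_{2\sms n}f_1\hat f_{2\sms n}-\hat f_{2\sms n}^2f_1$ to eliminate $f_1\hat f_{2\sms n}^2$ from the first summand; this is legitimate since that identity holds in $\hat U_q(\g)$ and may be multiplied on the right by $[h_{2\sms n}]_q$. Collecting the terms proportional to $\hat f_{2\sms n}f_1\hat f_{2\sms n}$ produces the Cartan factor $[2]_q[h_{2\sms n}]_q-[h_{2\sms n}-1]_q$, which equals $[h_{2\sms n}+1]_q$ by the standard $q$-number identity $[2]_q[x]_q=[x+1]_q+[x-1]_q$. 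Therefore
$$
\bar f_{1\sms n}\hat f_{2\sms n}=\hat f_{2\sms n}f_1\hat f_{2\sms n}[h_{2\sms n}+1]_q-\hat f_{2\sms n}^2f_1[h_{2\sms n}]_q=\hat f_{2\sms n}\bigl(f_1\hat f_{2\sms n}[h_{2\sms n}+1]_q-\hat f_{2\sms n}f_1[h_{2\sms n}]_q\bigr),
$$
and the bracket on the right is precisely $\hat f_{1\sms n}$ by the recursive definition of the dynamical root vector, which completes the argument.

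There is no genuine obstacle here: the derivation is only a few lines once the two ingredients — the weight shift of $h_{2\sms n}$ through $\hat f_{2\sms n}$ and Lemma~\ref{Dyn_h_Serre} — are in hand. The single point requiring care is the bookkeeping of the $q$-number shifts: conjugation by $\hat f_{2\sms n}$ shifts $h_{2\sms n}$ by $2$ because $\al_{2\sms n}$ has square length $2$, and the coefficients $[h_{2\sms n}+2]_q$ and $[h_{2\sms n}+1]_q$ in $\bar f_{1\sms n}$ are tuned precisely so that after this shift the dynamical Serre relation applies. One may double-check the normalization on $n=2$, where $\hat f_{2\sms n}=f_2$, $h_{2\sms n}=h_2$, and Lemma~\ref{Dyn_h_Serre} degenerates to the ordinary Serre relation.
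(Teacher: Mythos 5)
Your argument is correct and is exactly the route the paper intends: its one-line proof of this corollary invokes Lemma~\ref{Dyn_h_Serre} together with the recursive definition of $\hat f_{1\sms n}$ through $\hat f_{2\sms n}$, and your computation simply writes out the details (the weight shift $[h_{2\sms n}+c]_q\,\hat f_{2\sms n}=\hat f_{2\sms n}[h_{2\sms n}+c-2]_q$ and the identity $[2]_q[x]_q=[x+1]_q+[x-1]_q$). No gaps; the bookkeeping of the shifts is handled correctly.
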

\begin{proof}
The proof readily follows from  Lemma \ref{Dyn_h_Serre} and definition of $\hat f_{1\sms n}$ through
$\hat f_{2\sms n}$.
\end{proof}
\noindent
A straightforward refinement of Proposition \ref{key} extends to
$
e_1 \hat f_{1\sms n}=\hat f_{2\sms n}[h_{1\sms n}]_q+ \bar f_{1\sms n} e_1
$. Along with Corollary \ref{[f_2n,f_1n]}, this gives
\be
e_1\hat f_{1\sms n}^m =
[m]_q\hat f_{2\sms n} \hat f_{1\sms n}^{m-1} [h_{1\sms n}-m+1]_q+\bar f_{1\sms n}^m e_1.
\label{e_1f_1n^m}
\ee
Put formally $\hat f_{n+1\sms n}=1$. Corollary \ref{key} gives rise to the following result.
\begin{propn}
For an arbitrary weight $\la$ and a positive integer $m$,
$$
e_i\hat f_{k\sms n}^m v_\la=\dt_{ki}[m]_q [\la_{k\sms n}-m+1]_q \hat f_{k+1\sms n} \hat f_{k\sms n}^{m-1} v_\la,
$$
where $i,k=1,\ldots, n$.
\label{singular}
\end{propn}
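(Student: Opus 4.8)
The plan is to split into the cases $i<k$, $i>k$ and $i=k$; in the first two both sides vanish, and the third carries the content of the statement.

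For $i<k$ I would observe that $\hat f_{k\sms n}\in U_q(\b^-)$ is built solely out of $f_k,\dots,f_n$ and $t^{\pm 1}_{k+1},\dots,t^{\pm1}_n$, and that for $i<k$ one has $[e_i,f_j]=0$ whenever $j\geqslant k$ and $[e_i,t_j]=0$ whenever $j\geqslant k+1$ (since then $|i-j|\geqslant 2$); hence $e_i$ commutes with $\hat f_{k\sms n}$, so $e_i\hat f_{k\sms n}^m v_\la=\hat f_{k\sms n}^m e_i v_\la=0$. For $i>k$ (which forces $k<n$) Proposition \ref{key} gives $\hat f_{k\sms n}\in U^{e_i}$; since $U^{e_i}$ is a subalgebra we get $\hat f_{k\sms n}^m\in U^{e_i}$, hence $e_i\hat f_{k\sms n}^m\in Ue_i$ and $e_i\hat f_{k\sms n}^m v_\la=0$. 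In both cases the right-hand side also vanishes, thanks to the Kronecker symbol $\dt_{ki}$.

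The substantive case is $i=k$, where I would use the index-shifted analogue of (\ref{e_1f_1n^m}),
$$
e_k\hat f_{k\sms n}^m=[m]_q\,\hat f_{k+1\sms n}\hat f_{k\sms n}^{m-1}\,[h_{k\sms n}-m+1]_q+\bar f_{k\sms n}^{\,m}\,e_k ,
$$
with $\bar f_{k\sms n}=f_k\hat f_{k+1\sms n}[h_{k+1\sms n}+2]_q-\hat f_{k+1\sms n}f_k[h_{k+1\sms n}+1]_q$. For $k<n$ this is proved exactly as (\ref{e_1f_1n^m}), since Lemma \ref{Dyn_h_Serre}, Corollary \ref{[f_2n,f_1n]} and the refinement $e_1\hat f_{1\sms n}=\hat f_{2\sms n}[h_{1\sms n}]_q+\bar f_{1\sms n}e_1$ of Proposition \ref{key} all concern only the sub-chain $\al_k,\dots,\al_n$ of simple roots and hold verbatim after the relabelling $1\mapsto k$ (equivalently, one argues inside $U_q(\g_{k\sms n})\cong U_q(\s\l(n-k+2))$, for which $v_\la$ is a highest weight vector and $\hat f_{k\sms n}$ is the top dynamical root vector); for $k=n$ it is just the quantum $\s\l(2)$ relation $e_n f_n^m=[m]_q f_n^{m-1}[h_n-m+1]_q+f_n^m e_n$ together with the convention $\hat f_{n+1\sms n}=1$. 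Applying the displayed identity to $v_\la$ annihilates the term $\bar f_{k\sms n}^{\,m}e_k v_\la$, and because $h_{k\sms n}=h_k+\dots+h_n+(n-k)$ acts on $v_\la$ by the scalar $\la_{k\sms n}=\la_k+\dots+\la_n+n-k$, the Cartan factor $[h_{k\sms n}-m+1]_q$ becomes $[\la_{k\sms n}-m+1]_q$; this is the claimed identity.

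The step I expect to require most care is the transfer of the operator identity (\ref{e_1f_1n^m}) from the top index $1$ to a general $k$. I anticipate it is routine, since the recursion defining $\hat f_{k\sms n}$ from $\hat f_{k+1\sms n}$, and all the auxiliary identities behind it, never involve $\al_1,\dots,\al_{k-1}$ and are insensitive to the ambient algebra $U_q(\g)$; once it is granted, the proof reduces to the one-line specialization at $v_\la$ above and the two commutation remarks for $i\neq k$.
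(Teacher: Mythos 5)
Your proposal is correct and follows essentially the same route as the paper: the paper's own proof likewise dismisses the off-diagonal cases ("the delta symbol is obvious"), reduces to $i=k=1$ by the implicit relabelling inside $U_q(\g_{k\sms n})$, and then specializes the identity (\ref{e_1f_1n^m}) at $v_\la$. You have merely spelled out the details the paper leaves tacit (the commutation argument for $i<k$, the use of $U^{e_i}$ via Proposition \ref{key} for $i>k$, and the transfer of (\ref{e_1f_1n^m}) to general $k$), all of which are sound.
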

\begin{proof}
The delta symbol is obvious. It is then sufficient to consider the case $i=k=1$.
This is an immediate consequence of  (\ref{e_1f_1n^m}).
\end{proof}
Corollary \ref{dyn_not0} with Proposition \ref{singular} gives
\begin{corollary}
The vector $\hat f_{k\sms n}^m v_\la$ is singular
if and only if $[\la_{k\sms n}-m+1]_q =0.$
\end{corollary}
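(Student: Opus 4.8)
The statement is read off directly from Proposition \ref{singular}, so the plan is short. A weight vector $w\in M_\la$ is singular exactly when it is killed by every Chevalley generator $e_i$, $i\in[1,n]$: these generate $U_q(\n^+)$, and if $e_iw=0$ for all $i$ then every monomial $e_{i_1}\cdots e_{i_r}$ with $r\geqslant 1$ kills $w$ as well, so $w$ is annihilated by the whole augmentation ideal of $U_q(\n^+)$, in particular by $\n^+$. Specialising Proposition \ref{singular} to $w=\hat f_{k\sms n}^m v_\la$, the equations $e_iw=0$ for $i\neq k$ hold automatically because of the Kronecker symbol, and singularity of $w$ collapses to the one condition contained in
$$
e_k\,\hat f_{k\sms n}^m v_\la=[m]_q\,[\la_{k\sms n}-m+1]_q\,\hat f_{k+1\sms n}\hat f_{k\sms n}^{m-1}v_\la .
$$

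Next I would check that the two factors accompanying $[\la_{k\sms n}-m+1]_q$ on the right cannot vanish. The $q$-integer $[m]_q$ is non-zero for $m\in\N$ because $q$ is not a root of unity. For the vector $\hat f_{k+1\sms n}\hat f_{k\sms n}^{m-1}v_\la$ I would invoke the standard structure of Verma modules: by the PBW factorisation $U_q(\g)=U_q(\n^-)U_q(\b^+)$ the assignment $u\mapsto uv_\la$ is a linear isomorphism $U_q(\n^-)\to M_\la$; the algebra $U_q(\n^-)$ is an integral domain (an iterated Ore extension); and by Corollary \ref{dyn_not0} the specialisation of each dynamical root vector $\hat f_{k\sms n}$, $\hat f_{k+1\sms n}$ at an arbitrary weight is a non-zero element of $U_q(\n^-)$ --- for $k=n$ one has $\hat f_{n+1\sms n}=1$ and the relevant fact is the elementary $\s\l(2)$ statement $f_n^{m-1}v_\la\neq 0$. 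Unwinding $\hat f_{k+1\sms n}\hat f_{k\sms n}^{m-1}v_\la$, evaluating each dynamical root vector at the successive weights of $v_\la,\ \hat f_{k\sms n}v_\la,\dots$, presents it as a product of non-zero elements of the domain $U_q(\n^-)$ applied to $v_\la$, hence non-zero. The same argument shows $\hat f_{k\sms n}^m v_\la\neq 0$, so it is an honest candidate singular vector and not the zero vector.

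With these non-vanishing facts in hand, the displayed identity says precisely that $e_k\hat f_{k\sms n}^m v_\la=0$ if and only if $[\la_{k\sms n}-m+1]_q=0$, which is the assertion. I do not expect a genuine obstacle here: the only non-formal ingredient is the non-vanishing of $\hat f_{k+1\sms n}\hat f_{k\sms n}^{m-1}v_\la$, and that is supplied by Corollary \ref{dyn_not0} together with the freeness of $M_\la$ over $U_q(\n^-)$; everything else is substitution into Proposition \ref{singular}. As a sanity check, $[\la_{k\sms n}-m+1]_q=0$ is the quantum analogue of the condition $(\la+\rho,\al_{k\sms n})=m$ that governs a singular vector of weight $\la-m\al_{k\sms n}$, so the corollary is consistent with the classical picture.
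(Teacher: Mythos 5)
Your argument is correct and follows the paper's own route: the paper derives the corollary precisely by combining Proposition \ref{singular} with Corollary \ref{dyn_not0}, exactly as you do. The extra details you supply (the reduction of singularity to the simple generators, and the non-vanishing of $\hat f_{k+1\sms n}\hat f_{k\sms n}^{m-1}v_\la$ via specialization of the dynamical root vectors, the domain property of $U_q(\n^-)$, and the freeness of $M_\la$) are just the implicit steps the paper leaves to the reader.
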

For classical universal enveloping algebras, this result was obtained in \cite{Zh}.


\begin{thebibliography}{A}
\bibitem{Jan2} Jantzen, J. C.: Lectures on quantum groups.Grad. Stud. in Math., {\bf 6}. AMS, Providence, RI, 1996.
\bibitem{EK} Etingof, P., Kirillov, A.: Representations of affine Lie algebras, parabolic equations and Lam´e functions. Duke Math. J.  {\bf 74}, 585--614 (1994).
\bibitem{ES} Etingof, P.,  Schiffmann, O.:
Lectures on the dynamical Yang-Baxter equations, Quantum Groups and Lie Theory, London Math. Soc. LNS {\bf290 }(Durham, 1999) (2002).
\bibitem{AL} Alekseev, A., Lachowska, A.:
Invariant $*$-product on coadjoint orbits and the Shapovalov
pairing.
Comment. Math. Helv. {\bf 80}, 795--810  (2005).
\bibitem{KST}
 Karolinsky,   E., Stolin, A., Tarasov, V.:
Irreducible highest weight modules and equivariant quantization. Adv. Math. {\bf 211}, 266--283 (2007).

\bibitem{Sh} Shapovalov, N. N.: On a bilinear form on the universal enveloping algebra of a complex
semisimple Lie algebra, Funk. Anal. {\bf 6}, 65--70 (1972).
\bibitem{DCK} de Concini, C., Kac, V. G.: Representations of quantum groups at roots
of 1, Operator algebras, unitary representations, enveloping algebras,
and invariant theory (Paris, 1989), Progress in Mathematics, {\bf  92},
Birkh$\rm\ddot{a}$user, 1990, pp 471--506.
\bibitem{Jan1} Jantzen, J. C.:
Kontravariante formen und Induzierten Darstellungen halbeinfacher Lie-Algebren,
Math. Ann. {\bf 226},  53--65 (1977).


\bibitem{BHST} Burdk, C.,  Havlek, M., Smirnov, Yu.F., Tolstoy, V.N.: q-Analog of Gelfand-Graev Basis for the Noncompact
Quantum Algebra $U_q(u(n; 1))$, SIGMA 6 (2010), 010.
\bibitem{Mol} Molev, A.: Gelfand-Tsetlin bases for classical Lie algebras, in "Handbook of Algebra", Vol. 4, (M. Hazewinkel, Ed.), Elsevier, 2006, pp. 109--170.

\bibitem{ChP} Chari V. and Pressley A.:  A guide to quantum groups, Cambridge University Press, Cambridge, 1994.
\bibitem{D} Drinfeld,  V.:
   Quantum Groups. In Proc. Int. Congress of Mathematicians,
  Berkeley 1986, Gleason,  A. V.  (eds)  pp. 798--820, AMS, Providence (1987).
\bibitem{Mick} Mickelsson, J.: Step algebras of semi-simple subalgebras of Lie algebras, Reports
Math. Phys. {\bf 4}, 307--318 (1973).

\bibitem{AST} Asherova, R. M., Smirnov, Yu. F., and Tolstoy, V. N.: Projection operators for the
simple Lie groups, Theor. Math. Phys. {\bf 8}, 813-- 825 (1971).

\bibitem{KT} Khoroshkin, S. M., Tolstoy, V.N.: Extremal projector and universal R-matrix
for quantum contragredient Lie (super)algebras, in: Quantum Groups and Re-
lated Topics (R. Gielerak et al., eds.), Kluwer Academic Publishers, Dordrecht
1992, pp. 23--32.

\bibitem{O} Ostapenko, P.: Inverting the Shapovalov form. J. Algebra, {\bf 147}, 90--95 (1992).
\bibitem{ESt} Etingof, P.,  Styrkas, K.:
Algebraic Integrability of Macdonald Operators and Representations of Quantum Groups.
Comp. Math. {\bf 114}, 125--152 (1998).

\bibitem{Zh} Zhelobenko, D. P., An introduction to theory of S-algebras over S-algebras//
Representations of Lie groups and related topics, Adv. Study in Contemporary Maths. {\bf 7}, N.Y.: Gordon \& Breach 1990.

\end{thebibliography}
\end{document}